\def\BibTeX{{\rm B\kern-.05em{\sc i\kern-.025em b}\kern-.08em
		T\kern-.1667em\lower.7ex\hbox{E}\kern-.125emX}}
\markboth{\hskip20pc IEEE TRANSACTIONS TRANSACTIONS ON AUTOMATIC CONTROL}
{Kulmburg \MakeLowercase{\textit{et al.}}: Approximability of the Containment Problem for Zonotopes and Ellipsotopes}

\documentclass[letterpaper,journal,final]{IEEEtran}
\usepackage{amsthm}
\usepackage{xcolor}

\usepackage{xparse}
\usepackage{bm}
\usepackage{cite}
\usepackage{amsmath,amssymb,amsfonts}
\usepackage{mathrsfs}
\usepackage{algorithmic}
\usepackage{graphicx}
\usepackage{algorithm,algorithmic}
\usepackage{hyperref}
\hypersetup{hidelinks=true}
\usepackage{textcomp}
\usepackage{stmaryrd}
\usepackage{siunitx}
\sisetup{per-mode=fraction,fraction-function=\tfrac}
\usepackage{booktabs}

\usepackage{enumerate}
\usepackage{aligned-overset}

\usepackage{tikz}
\usetikzlibrary{decorations.markings,arrows.meta,decorations.pathreplacing,arrows,shapes}
\usetikzlibrary{patterns}
\usetikzlibrary{automata,positioning}
\usepackage{pgfplots}
\usepgfplotslibrary{groupplots}
\usepackage{multirow}

\newcommand{\naturals}{\mathbb{N}}
\newcommand{\reals}{\mathbb{R}}

\newcommand{\X}{\mathcal{X}}
\newcommand{\U}{\mathcal{U}}
\newcommand{\W}{\mathcal{W}}
\newcommand{\T}{\mathcal{T}}
\newcommand{\tend}{t_{\mathrm{end}}}
\newcommand{\R}{\mathcal{R}}

\newcommand{\ball}{\mathcal{B}}


\DeclareMathOperator*{\argmax}{argmax}
\newcommand{\trace}{\mathrm{tr}}
\newcommand{\conv}{\mathrm{conv}}

\DeclareMathOperator{\Diag}{Diag}

\DeclareMathOperator{\rank}{rank}

\renewcommand{\v}[1]{\vec{#1}}
\newcommand{\mat}[1]{\bm{\underline{\smash{#1}}}}


\newcommand{\problem}[1]{\mathsf{#1}}
\newcommand{\algo}[1]{\mathtt{#1}}
\newcommand{\Poly}{\problem{P}}
\newcommand{\NP}{\problem{NP}}

\newcommand{\RQP}{\problem{RQP}}
\newcommand{\PTAS}{\problem{PTAS}}
\newcommand{\FPTAS}{\problem{FPTAS}}
\newcommand{\coNP}{\problem{co}\text{-}\problem{NP}}

\newcommand{\RP}{\problem{RP}}

\newcommand{\Set}[2]{\left\{#1\;\middle|\;#2\right\}}
\NewDocumentCommand{\iprod}{gg}{\left\langle\IfValueTF{#1}{#1}{\cdot},\IfValueTF{#2}{#2}{\cdot}\right\rangle}
\NewDocumentCommand{\absiprod}{gg}{\left|\left\langle\IfValueTF{#1}{#1}{\cdot},\IfValueTF{#2}{#2}{\cdot}\right\rangle\right|}
\NewDocumentCommand{\norm}{g}{\left\|\IfValueTF{#1}{#1}{\cdot}\right\|}
\NewDocumentCommand{\nnorm}{g}{\|\IfValueTF{#1}{#1}{\cdot}\|}

\newcommand{\Nts}{N_{\mathrm{ts}}}


\makeatletter
\DeclareRobustCommand\widecheckinternal[1]{{\mathpalette\@widecheckinternal{#1}}}
\def\@widecheckinternal#1#2{%
	\setbox\z@\hbox{\m@th$#1#2$}%
	\setbox\tw@\hbox{\m@th$#1%
		\widehat{\vrule\@width\z@\@height\ht\z@
			\vrule\@height\z@\@width\wd\z@}$}%
	\dp\tw@-2\ht\z@
	\@tempdima\ht\z@ \advance\@tempdima2\ht\tw@ \divide\@tempdima\thr@@
	\setbox\tw@\hbox{\raise1.05\@tempdima\hbox{\scalebox{1}[-1]{\lower\@tempdima\box\tw@}}}%
	{\ooalign{\box\tw@ \cr \box\z@}}}
\makeatother
\newcommand{\widecheck}[1]{\,\widecheckinternal{\kern -2pt #1}}

\newcommand{\Zcirc}{\widehat{Z}}
\newcommand{\Zin}{\widecheck{Z}}
\newcommand{\Ecirc}{\widehat{E}}
\newcommand{\Ein}{\widecheck{E}}

\newcommand{\ST}{\mathtt{LR}}
\newcommand{\SR}{\mathtt{SR}}

\newcommand{\ZSR}{\mathtt{ZSR}}


\newtheorem{theorem}{Theorem}
\newtheorem{proposition}[theorem]{Proposition}%
\newtheorem{lemma}[theorem]{Lemma}%

\newtheorem{example}[theorem]{Example}%
\newtheorem{remark}[theorem]{Remark}%

\newtheorem{definition}[theorem]{Definition}%

\definecolor{Blue}{RGB}{0,92,171}
\definecolor{Red}{RGB}{227,27,35}
\definecolor{Yellow}{RGB}{255,195,37}
\definecolor{YellowDark}{RGB}{219, 126, 8}

\newcommand{\padding}{0.3}

\pgfdeclarepatternformonly{my large dots}{\pgfqpoint{-1pt}{-1pt}}{\pgfqpoint{5pt}{5pt}}{\pgfqpoint{6pt}{6pt}}%
{
	\pgfpathcircle{\pgfqpoint{3pt}{3pt}}{2pt}
	\pgfusepath{fill}
}

\begin{document}
	\title{Approximability of the Containment Problem for Zonotopes and Ellipsotopes}
	\author{Adrian Kulmburg, Lukas Schäfer, and Matthias Althoff
		\thanks{This work has been accepted for publication in the \emph{IEEE Transactions on Automatic Control}. The final version is available at \url{https://doi.org/10.1109/TAC.2025.3583624}
			\newline
	        The authors gratefully acknowledge financial support by the project justITSELF funded by the European Research Council (ERC) under grant agreement No 817629, by the German Research Foundation (DFG) under grant numbers AL 1185/19-1 and SFB 1608 (Convide), and the German Federal Ministry for Economics Affairs and Climate Action project VaF under grant KK5135901KG0.}
		\thanks{The authors are with the School of Computation, Information and Technology, Technical University of Munich, Boltzmannstr. 3, 85748 Garching b. München, Germany.\\
		Email: \texttt{\{adrian.kulmburg,lukas.schaefer,althoff\}@tum.de}
		}
	}
	
	\maketitle
	
	\begin{abstract}
		The zonotope containment problem, i.e., whether one zonotope is contained in another, is a central problem in control theory. Applications include detecting faults and robustifying controllers by computing invariant sets, and obtaining fixed points in reachability analysis. Despite the inherent $\coNP$-hardness of this problem, an approximation algorithm developed by S. Sadraddini and R. Tedrake has gained widespread recognition for its swift execution and consistent reliability in practice. In our study, we substantiate the precision of the algorithm with a definitive proof, elucidating the empirical accuracy observed in practice. Our proof hinges on establishing a connection between the containment problem and the computation of matrix norms, thereby enabling the extension of the approximation algorithm to encompass ellipsotopes -- a broader class of sets derived from zonotopes. We also explore the computational complexity of the ellipsotope containment problem with a focus on approximability. Finally, we present new methods to compute safe sets for linear dynamical systems, demonstrating the practical relevance of approximating the ellipsotope containment problem.
	\end{abstract}
	
	\begin{IEEEkeywords}
		Containment problems, linear systems, optimization, robust control.
	\end{IEEEkeywords}

	\section{Introduction}
	The containment problem asks whether a set $\widecheck{S}$ (the \emph{inbody}) is contained in a set $\widehat{S}$ (the \emph{circumbody}). This problem appears naturally in the context of reachability analysis \cite{althoff_reachability_2013}, robust control \cite{rakovic_optimized_2007}, controller synthesis \cite{ren_zonotope-based_2021}, conformance checking \cite{Roehm2016}, but has also been a subject of study of its own for spectrahedra \cite{kellner_containment_2013} and polytopes \cite{gritzmann_complexity_1994}. For certain applications, it is not enough to verify whether $\widecheck{S}$ is contained in $\widehat{S}$ -- one may rather be interested in estimating \emph{how far away} $\widecheck{S}$ is from being contained in $\widehat{S}$. One way to quantify this, if $\widehat{S}$ has a center $\v{c}_{\widehat{S}}$, is by searching for the smallest scaling factor $r(\widecheck{S},\widehat{S})\geq 0$ such that
	\begin{equation}
		\label{eq:intro_containment}
		\widecheck{S} \subseteq r(\widecheck{S},\widehat{S}) \cdot (\widehat{S}-\v{c}_{\widehat{S}})+\v{c}_{\widehat{S}}.
	\end{equation}
	If $\widehat{S}\subset \reals^n$ is compact, convex, and centrally symmetric around $\v{c}_{\widehat{S}}$, the set $\widehat{S}$ induces a norm $\norm_{\widehat{S}}$ on $\reals^n$ with unit ball $\Set{\v{x}\in\reals^n}{\norm{\v{x}}_{\widehat{S}} \leq 1}$ coinciding $\widehat{S}-\v{c}_{\widehat{S}}$, and
	\begin{equation}
		\label{eq:r_def_general}
		r(\widecheck{S},\widehat{S}) = \max_{\v{s} \in \widecheck{S}} \norm{\v{s} - \v{c}_{\widehat{S}}}_{\widehat{S}}.
	\end{equation}
	In many cases, calculating $r(\widecheck{S},\widehat{S})$ (or, equivalently, finding the minimal $r(\widecheck{S}, \widehat{S})$ such that \eqref{eq:intro_containment} holds) is known to be hard, as demonstrated in \cite{gritzmann_complexity_1994} for polytopes, in \cite{kellner_containment_2013} for spectrahedra, or in \cite{Kulmburg2021} for zonotopes. On the other hand, \cite{sadraddini_linear_2019} describes a method that gives a sufficient (but not necessary) condition to check containment for two polytopes and, in particular, for two zonotopes. This method is fast (it only requires solving a linear program) and surprisingly accurate in practice for the zonotope containment. We will discuss at the beginning of Section \ref{sec:containment_problem} how this sufficient criterion for zonotope containment is equivalent to an approximation of \eqref{eq:r_def_general}, providing us with a way to quantify the accuracy of the algorithm. We will also establish a connection between $r$ and certain matrix norms, allowing us to generalize the criterion from \cite{sadraddini_linear_2019} to the ellipsotope containment problem. Ellipsotopes \cite{ellipsotopes} are a generalization of zonotopes that can also represent ellipsoids and centrally symmetric polytopes.
	
	The core of Section \ref{sec:containment_problem} consists of a case-by-case presentation of approximations to compute $r$. We continue by investigating the computational complexity of the ellipsotope containment problem in Section \ref{sec:hardness}, showing that most instances are hard to approximate (unless $\Poly = \NP$). Finally, in Section \ref{sec:applications} we present a novel procedure, based on \cite{gruber}, to compute safe sets of linear dynamical systems using approximations to the containment problem for zonotopes and ellipsoids.
	
	\section{Preliminaries}
	\subsection{Basic Notation}
	Any letter with an arrow $\v{v}$ represents a \emph{vector} in $\reals^n$ or a vector field $\reals^m \rightarrow \reals^n$, \emph{matrices} in $\reals^{n \times m}$ are denoted by bold, underlined letters (e.g., $\mat{M}$).
	The vectors $\v{e}_i \in \reals^n$ for $i=1,...,n$ are the \emph{canonical basis vectors} of $\reals^n$. For a vector $\v{v}\in \reals^n$, $v_i$ is the $i$-th coordinate of $\v{v}$. Similarly, $M_{ij}$ is the entry in the $i$-th row and $j$-th column of $\mat{M}\in\reals^{n\times m}$.
	The matrix $\mat{I}_n$ is the $n$-dimensional \emph{identity matrix}, $\mat{0}_{n\times m}$ the $\bm{n\times m}$\emph{-matrix filled with zeros}, $\v{1}_n$ the $n$-dimensional \emph{vector filled with ones}, and $\v{0}_n$ the $n$-dimensional \emph{vector filled with zeros}. If the dimension is clear from the context, we ommit the index. For $\v{v}\in\reals^n$, $\Diag(\v{v})$ is the \emph{diagonal matrix} with $\v{v}$ as its diagonal. For a matrix $\mat{M}$, $\mat{M}^+$ is its \emph{Moore-Penrose pseudo-inverse}, $\trace(\mat{M})$ is its \emph{trace}, $\text{Im}(\mat{M})$ is its image, $\rank(\mat{M})$ is its rank, and $|\mat{M}|$ is the element-wise \emph{matrix of absolute values} of $\mat{M}$. For matrices $\mat{A}, \mat{B} \in \reals^{n\times n}$, $\mat{A} \succeq \mat{B}$ means that $\mat{A}-\mat{B}$ is positive semidefinite, and $\mat{A} \succeq 0$ means $\mat{A}$ is positive semidefinite. For vectors $\v{v}_1,...,\v{v}_N\in\reals^n$, $\conv(\v{v}_1,...,\v{v}_N)$ is their convex hull.
	If $\mat{A}, \mat{B}$ are matrices (or vectors), $[\mat{A} \; \mat{B}]$ is the \emph{horizontal} and $\begin{bmatrix}\mat{A}\\\mat{B}\end{bmatrix}$ the \emph{vertical concatenation} of $\mat{A}$ and $\mat{B}$ (assuming the dimensions of $\mat{A}$ and $\mat{B}$ match).
	For $S, T \subseteq\reals^n$, $S \oplus T := \Set{s+t}{s\in S, t\in T}$ is the \emph{Minkowski sum} of $S$ and $T$.
	For $\v{v},\v{w}\in\reals^n$, their (Euclidean) \emph{inner product} is $\v{v}^\top\v{w} = v_1w_1+...+v_nw_n$. For $p\in[1,\infty)$, $\norm{\v{v}}_p := \sqrt[p]{|v_1|^p+...+|v_n|^p}$ is the $\bm{p}$\emph{-norm} of $\v{v}$, for $p=\infty$, $\norm{\v{v}}_{\infty} := \max_i |v_i|$. We denote the \emph{unit ball} of the $p$-norm as $\ball_p$, and $p^*$ the \emph{Hölder conjugate} of $p$, defined through $\frac{1}{p}+\frac{1}{p^*} = 1$.
	\subsection{Matrix Norms}
	As mentioned in the introduction, solving a containment problem often involves the computation of a norm. We begin by defining the $L_{p,q}$-norms and $L_{p,q}^\top$-norms:
	\begin{definition}[$L_{p,q}$-norms and $L_{p,q}^\top$-norms]
		\label{def:matrix_L_p_q_norms}
		For $p,q\in[1,\infty]$ and a matrix $\mat{A}\in\reals^{n\times m}$ with columns $\v{a}_1,\hdots,\v{a}_m$, the $L_{p,q}$-norm is
		\begin{equation}
			\norm{\mat{A}}_{L_{p,q}} = \norm{ \begin{pmatrix}\norm{\v{a}_1}_p & \cdots & \norm{\v{a}_m}_p\end{pmatrix}^\top }_q.
		\end{equation}
		For $p, q \in [1, \infty)$, this can also be written as
		\begin{equation}
			\label{eq:L_pq_def}
			\norm{\mat{A}}_{L_{p,q}} = \left(\sum_{j=1}^m \left( \sum_{i=1}^n |A_{ij}|^p\right)^{q/p}\right)^{1/q}.
		\end{equation}
		The transposed $L_{p,q}$-norms, or $L_{p,q}^\top$-norms, are defined as
		\begin{equation}
			\norm{\mat{A}}_{L_{p,q}^\top} = \nnorm{\mat{A}^\top}_{L_{p,q}}.
		\end{equation}
	\end{definition}
	
	One can also define norms on matrices through the operator norm (see \cite[Chapter 5.6]{horn2012}):
	\begin{definition}[Operator Matrix $p\mapsto q$-norms]
		\label{def:matrix_p_q_norms}
		For a matrix $\mat{A}\in\reals^{n\times m}$ and numbers $p,q\in[1,\infty]$, the operator $p\mapsto q$-norm of $\mat{A}$ is
		\begin{equation}
			\norm{\mat{A}}_{p\mapsto q} = \sup_{\norm{\v{v}}_p\leq1} \norm{\mat{A}\v{v}}_q.
		\end{equation}
		If $p=q$, we will use $\norm{\mat{A}}_p$ as a shorthand for $\norm{\mat{A}}_{p \mapsto p}$.
	\end{definition}
	
	For $p=q=1$ and $p=q=\infty$, the $p\mapsto q$-norm coincides with an entry-wise norm:
	\begin{equation}
		\label{eq:11_L1oo}
		\norm{\mat{A}}_{1} = \norm{\mat{A}}_{L_{1,\infty}}, \quad \text{ and } \quad \norm{\mat{A}}_{\infty} = \norm{\mat{A}}_{L_{1,\infty}^\top}.
	\end{equation}
	
	For a norm $\norm$ on $\reals^n$, $\norm^*$ is its \emph{dual norm} (see \cite[Chapter 1.3]{buehler2018}), defined as
	\begin{equation}
		\label{eq:dual_norm_def}
		\norm{\v{y}}^* = \sup_{\norm{\v{x}} \leq 1} \v{y}^{\top}\vec{x}.
	\end{equation}
	As pointed out in \cite[Chapter A.1.6]{boyd_convex_2004}, for the vector $p$-norms,
	\begin{equation}
		\|\v{x}\|_{p}^* = \|\v{x}\|_{p^*}.
	\end{equation}
	For a norm $\norm$ on the space of matrices $\reals^{n\times m}$, the dual norm (see \cite[Chapter A.1.6]{boyd_convex_2004}) is
	\begin{equation}
		\norm{\mat{Y}}^* = \sup_{\norm{\mat{X}}\leq 1} \trace(\mat{X}^\top \mat{Y}).
	\end{equation}
	
	\subsection{Polyhedra and Polytopes}
	Polyhedra and polytopes are perhaps the most common convex set representations and can be defined as follows:
	\begin{definition}[Polyhedra and Polytopes]
		For $k,n\in\naturals$, $\mat{\Lambda} \in \reals^{k\times n}$, and $\v{\lambda} \in \reals^k$, the set
		\begin{equation}
			\label{eq:polyhedron_def}
			P(\mat{\Lambda}, \v{\lambda}) = \Set{\v{x}\in\reals^n}{\mat{\Lambda}\v{x} \leq \v{\lambda}}
		\end{equation}
		is a (convex) \emph{polyhedron} (or \emph{H-polyhedron}). The inequality in \eqref{eq:polyhedron_def} is to be understood component-wise, i.e., $(\mat{\Lambda}\v{x})_i \leq \lambda_i$ for all $i=1,...,k$, where $\mat{\Lambda}$ is the \emph{halfspace matrix} and $\v{\lambda}$ the \emph{halfspace offset}. A bounded polyhedron is called a \emph{polytope} (or \emph{H-polytope}), in which case it can also be represented as the convex hull $\conv(\v{v}_1,...,\v{v}_N)$ of the points $\v{v}_i\in \reals^n$ with $i=1,...,N$ (see \cite[Theorem 1.1]{ziegler2012lectures}). If a polytope is represented this way, it is also referred to as a \emph{V-polytope}. For a polytope or polyhedron $P \subseteq \reals^n$, we say that $P$ is \emph{symmetric} if $\v{x} \in P \Leftrightarrow -\v{x} \in P$ for all $\v{x}\in\reals^n$.
	\end{definition}
	\begin{remark}
		Note that a symmetric V-polytope $P\subset \reals^n$ can be written as $\Set{\mat{V}\v{\alpha}}{\norm{\v{\alpha}}_1 \leq 1}$, where the columns of $\mat{V} \in \reals^{n\times m}$ are the vertices of $P$.
	\end{remark}

	\subsection{Ellipsotopes, Zonotopes, and Ellipsoids}
	Let us now introduce our primary object of study, namely ellipsotopes (see \cite[Definition 2]{ellipsotopes}):
	\begin{definition}[Ellipsotopes]
		\label{def:ellipsotopes}
		For $m,n \in \naturals$, $\mat{G} \in \reals^{n\times m}$, $\v{c} \in \reals^n$, and $p\in[1,\infty]$, the set
		\begin{equation}
			E_p(\mat{G},\v{c}) = \Set{\v{c} + \mat{G}\v{\alpha}}{\norm{\v{\alpha}}_{p}\leq 1}
		\end{equation}
		is a basic \emph{ellipsotope} (or $p$-\emph{ellipsotope}) with \emph{center} $\v{c}$ and \emph{generator matrix} $\mat{G}$, with column vectors $\v{g}_1,...,\v{g}_m$ called \emph{generators}. An ellipsotope is \emph{non-degenerate}, if $\mat{G}$ has full rank and $n\leq m$.	
	\end{definition}
	\begin{remark}
		In the present article, for simplicity the term \emph{ellipsotope} will refer to \emph{basic ellipsotopes} as defined above in Definition \ref{def:ellipsotopes}. Please note that the class of non-basic ellipsotopes as defined in \cite{ellipsotopes} encompasses a broader range of sets.
	\end{remark}
	We will use the shorthand $E_p(\mat{G})$ for $E_p(\mat{G}, \v{0})$.
	Ellipsotopes with $p=2$ and $p=\infty$ are called \emph{ellipsoids}\footnote{Ellipsoids can also be represented as the solution set of convex quadratic forms \cite[Definition 2]{victor_ellipsoid} or through support functions \cite[Definition 2.1.3]{ellipsoidal_toolbox}. Due to space restrictions, we leave it to the reader to verify that one can change from one representation to another in polynomial time with respect to the size of the generator matrix or the dimension.} and \emph{zonotopes}, respectively, and we use the shorthand $E(\mat{G},\v{c})$, $E(\mat{G})$, $Z(\mat{G},\v{c})$, and $Z(\mat{G})$ for $E_2(\mat{G},\v{c})$, $E_2(\mat{G})$, $E_{\infty}(\mat{G},\v{c})$, and $E_{\infty}(\mat{G})$, respectively.
	
	Intuitively, a non-degenerate ellipsotope is a set that has a topological interior, as opposed to a degenerate ellipsotope that is entirely contained in a strict affine subspace of $\reals^n$, and for which every point lies on the (topological) boundary of the ellipsotope. Since non-degenerate ellipsotopes are convex, compact, and centrally symmetric, they induce a norm \cite[Theorem 5.3.1]{narici2010topological}:
	\begin{proposition}[Ellipsotope norm]
		\label{prop:ellipsotope_norm}
		Let $m\geq n$, and suppose $\mat{G} \in \reals^{n\times m}$ has full rank. For any $p\in[1,\infty]$ the function
		\begin{equation}
			\begin{split}
				\v{x} &\mapsto \min_{\mat{G}\v{\alpha} = \v{x}}\norm{\v{\alpha}}_{p}
			\end{split}
		\end{equation}
		defines a norm on $\reals^n$, which we denote by $\norm{\v{x}}_{E_p(\mat{G})}$. The unit ball of this norm coincides with $E_p(\mat{G})$.
	\end{proposition}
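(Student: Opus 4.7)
The proof proceeds in three natural stages: well-definedness, the norm axioms, and the identification of the unit ball.

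First, I would verify that the infimum defining $\norm{\v{x}}_{E_p(\mat{G})}$ is actually attained for every $\v{x}\in\reals^n$, so that we are dealing with a finite, well-defined quantity. Since $m\geq n$ and $\mat{G}$ has full rank $n$, the map $\v{\alpha}\mapsto\mat{G}\v{\alpha}$ is surjective, so the constraint set $F(\v{x}) := \Set{\v{\alpha}\in\reals^m}{\mat{G}\v{\alpha} = \v{x}}$ is a non-empty closed affine subspace of $\reals^m$, of the form $\v{\alpha}_0+\ker(\mat{G})$. The $p$-norm is continuous, and the reverse triangle inequality $\norm{\v{\alpha}_0 + \v{v}}_p \geq \norm{\v{v}}_p - \norm{\v{\alpha}_0}_p$ shows that $\norm\cdot\norm_p$ is coercive along $\ker(\mat{G})$, so it attains its minimum on $F(\v{x})$. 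This works uniformly for all $p\in[1,\infty]$.

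Second, I would check the three norm axioms. For positive definiteness, choosing $\v{\alpha}=\v{0}$ shows $\norm{\v{0}}_{E_p(\mat{G})} = 0$; conversely, if $\norm{\v{x}}_{E_p(\mat{G})}=0$, then the attained minimizer $\v{\alpha}^*$ must satisfy $\norm{\v{\alpha}^*}_p=0$, hence $\v{\alpha}^*=\v{0}$ and $\v{x}=\mat{G}\v{\alpha}^*=\v{0}$. Absolute homogeneity follows from the bijection $\v{\alpha}\mapsto \lambda\v{\alpha}$ between $F(\v{x})$ and $F(\lambda\v{x})$ for $\lambda\neq 0$, combined with the homogeneity of $\norm\cdot\norm_p$ (the case $\lambda = 0$ is already handled). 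For the triangle inequality, if $\v{\alpha}_x,\v{\alpha}_y$ are minimizers for $\v{x},\v{y}$, then $\v{\alpha}_x+\v{\alpha}_y \in F(\v{x}+\v{y})$, so
\begin{equation*}
\norm{\v{x}+\v{y}}_{E_p(\mat{G})} \leq \norm{\v{\alpha}_x+\v{\alpha}_y}_p \leq \norm{\v{\alpha}_x}_p + \norm{\v{\alpha}_y}_p = \norm{\v{x}}_{E_p(\mat{G})}+\norm{\v{y}}_{E_p(\mat{G})}.
\end{equation*}

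Finally, I would establish the identification of the unit ball by a double inclusion. If $\v{x}\in E_p(\mat{G})$, then by definition there exists $\v{\alpha}$ with $\mat{G}\v{\alpha}=\v{x}$ and $\norm{\v{\alpha}}_p\leq 1$, so taking the infimum gives $\norm{\v{x}}_{E_p(\mat{G})}\leq 1$. Conversely, if $\norm{\v{x}}_{E_p(\mat{G})}\leq 1$, then from the first step the minimum is attained by some $\v{\alpha}^*$ with $\mat{G}\v{\alpha}^* = \v{x}$ and $\norm{\v{\alpha}^*}_p \leq 1$, which exhibits $\v{x}$ as an element of $E_p(\mat{G})$.

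The only delicate point is the attainment of the infimum; once this is in hand, everything else is a direct verification using the linearity of $\mat{G}$ and the fact that $\norm\cdot\norm_p$ is itself a norm, so I do not anticipate any substantial obstacle.
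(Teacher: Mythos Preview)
Your proof is correct and is essentially the approach the paper has in mind: the paper simply defers to \cite[Section~3.2]{Kulmburg2021}, noting that the argument there for $p=\infty$ carries over verbatim once the $\infty$-norm is replaced by the $p$-norm, and your three-stage verification (attainment via coercivity on the affine fiber, direct check of the norm axioms, double inclusion for the unit ball) is exactly that argument written out in full.
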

	
	\begin{proof}
		The proof is nearly identical to \cite[Section 3.2.]{Kulmburg2021}, as one only needs to replace the $\infty$-norm by the $p$-norm.
	\end{proof}
	
	Note that evaluating $\norm{\v{x}}_{E_p(\mat{G})}$ involves solving a convex minimization problem, which can be done in polynomial time (with respect to the size of $\mat{G}$) up to arbitrary accuracy as described in \cite[Chapter 11]{boyd_convex_2004}.
	
	\subsection{Approximability of Optimization Problems}
	\label{sec:approximability}
	To determine the computational complexity of an optimization problem, one can consider its approximability \cite[p.~68]{vazirani2013approximation}:
	\begin{definition}[Approximation Algorithms]
		We consider a maximization problem $\sup_{y \in \mathcal{D}} f(y)$, where $\mathcal{D}$ and $f$ can be represented using a binary string $\chi$ of length $N$, i.e., $\chi \in \{0,1\}^N$. We refer to $\chi$ as an \emph{instance} or \emph{input} of the problem, $|\chi| := N$ is the \emph{input size}, and $\problem{opt}(\chi) = \sup_{y \in \mathcal{D}} f(y)$ is the \emph{optimal value} (see \cite[Chapter 1.6]{Arora_Barak_2009} and \cite[Chapter 1]{ausiello_complexity_1999}).
		For $\tau\geq 1$, a $\bm{\tau}$\emph{-approximation algorithm} is an algorithm that produces a value $\algo{approx}(\chi)$ such that 
		\begin{equation}
			\label{eq:approx_ratio_max}
			\problem{opt}(\chi) \leq \algo{approx}(\chi) \leq \tau \cdot \problem{opt}(\chi),
		\end{equation}
		and such that the algorithm runs in polynomial time with respect to $|\chi|$. The value $\tau$ is the \emph{approximation ratio} of the algorithm. For minimization problems, the approximation ratio is defined similarly: Instead of \eqref{eq:approx_ratio_max}, $\algo{approx}$ has to satisfy
		\begin{equation*}
			\tau'\cdot \problem{opt}(\chi) \leq \algo{approx}(\chi) \leq \problem{opt}(\chi),
		\end{equation*}
		where $\tau' \leq 1$.
	\end{definition}
	We can then define the class of problems that can be approximated to any desired degree of accuracy, as presented in \cite[p.~68]{vazirani2013approximation}:
	\begin{definition}[$\PTAS$, $\FPTAS$, and Inapproximability]
		The class of optimization problems for which there exists an $(1+\varepsilon)$- or $(1-\varepsilon)$-approximation algorithm for any $\varepsilon>0$ is called $\PTAS$ (\textbf{P}olynomial \textbf{T}ime \textbf{A}pproximation \textbf{S}cheme), whereas $\FPTAS$ (\textbf{F}ully \textbf{P}olynomial \textbf{T}ime \textbf{A}pproximation \textbf{S}cheme) consists of problems from $\PTAS$ that have an $(1+\varepsilon)$- or $(1-\varepsilon)$-approximation algorithm with a polynomial runtime with respect to the input size \emph{and} $1/\varepsilon$.
		
		On the other hand, a maximization\footnote{A similar definition can be made for minimization problems.} problem is called $\bm{\tau}$\emph{-inapproximable} if there cannot exist a $\widetilde{\tau}$-approximation algorithm with $1 \leq \widetilde{\tau} < \tau$, and the problem is \emph{inapproximable} if it is $\tau$-inapproximable $ \forall \tau\geq 1$.
	\end{definition}
	
	\begin{remark}
		\label{rmk:optimization_complexity}
		Most of the convex optimization problems found in the literature (linear, convex quadratic, as well as many instances of semi-definite optimization problems) are in $\FPTAS$, since they can be solved by interior-point methods (see \cite[Chapter 11]{boyd_convex_2004}). Specifically, solving a linear or convex quadratic program in standard form involving $n$ variables and $m$ linear equality constraints with $n \geq m$ requires $\mathcal{O}(n^{3.5}\ln(n))$ operations, assuming a fixed accuracy, as discussed in \cite[Chapter 11]{boyd_convex_2004}. The same is true for nonlinear but convex optimization problems with only linear equality constraints and a nonlinear objective function that can be evaluated in $\mathcal{O}(n)$. Semidefinite programs in standard form involving $n$ variables constrained by an $m\times m$-matrix inequality, with $n \leq m$, require $\mathcal{O}(m^{4.5}\ln(m))$ operations, as shown in \cite[Example~11.9]{boyd_convex_2004}.
	\end{remark}
	
	\begin{remark}
		\label{rmk:higher_inapproximability}
		It is also possible to generalize the concept of inapproximability to cases where $\tau$ is a function of the input size $|\chi|$. For instance, a problem is said to be inapproximable within a factor $\Omega(N^2)$ if there cannot exist an approximation algorithm with $\tau = CN^2$ for any constant $C\geq1$, where $N = |\chi|$.
	\end{remark}

	\section{The Ellipsotope Containment Problem}
	\label{sec:containment_problem}
	We now turn towards the problem of checking whether an ellipsotope $\Ein$ (called the \emph{inbody}) is contained within another ellipsotope $\Ecirc$ (called the \emph{circumbody}). For the remainder of this section we set $\Ein = E_p(\mat{G},\v{c})$ for $\mat{G} \in \reals^{n\times m}, \v{c}\in\reals^n, p\in[1,\infty]$ and $\Ecirc = E_q(\mat{H},\v{d})$ for $\mat{H} \in \reals^{n\times l}, \v{d} \in \reals^n, q\in[1,\infty]$, and we refer to an algorithm as running in polynomial time if it runs in polynomial time with respect to $n, m, l$, and some fixed accuracy for interior point methods as described in \cite[Chapter 11]{boyd_convex_2004}. In \cite[Equation (18)]{Kulmburg2021}, it was shown that the containment problem for zonotopes is equivalent to a certain optimization problem. The corresponding statement for ellipsotopes can be stated as follows:
	\begin{theorem}
		\label{definition:ellipsotope_containment}
		The quantity
		\begin{equation}
			\label{eq:ellipsotope_containment}
			r(\Ein,\Ecirc) = \max_{\norm{\v{\alpha}}_p\leq 1} \min_{\mat{H}\v{\beta} = \mat{G}\v{\alpha} + \v{c} - \v{d}}\nnorm{\v{\beta}}_q
		\end{equation}
		is the smallest scalar such that $\Ein \subseteq r(\Ein,\Ecirc) \cdot(\Ecirc - \v{d}) + \v{d}$, and $r(\Ein,\Ecirc) \leq 1$ if and only if $\Ein \subseteq \Ecirc$. In particular, if $\Ecirc$ is degenerate and
		\begin{equation}
			\label{eq:rank_constraint}
			\rank\left(\begin{bmatrix}\mat{G} & \v{c}-\v{d} & \mat{H}\end{bmatrix}\right) \neq \rank(\mat{H}),
		\end{equation}
		we use the convention $r(\Ein, \Ecirc) = \infty$.
	\end{theorem}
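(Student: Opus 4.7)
The plan is to identify the inner minimization in \eqref{eq:ellipsotope_containment} as the gauge function induced by the shifted circumbody $\Ecirc - \v{d} = E_q(\mat{H})$, and then apply the general formula \eqref{eq:r_def_general}. In the non-degenerate case, where $\mat{H}$ has full rank and $l\geq n$, Proposition~\ref{prop:ellipsotope_norm} tells us that
\begin{equation*}
    \min_{\mat{H}\v{\beta} = \v{x}} \|\v{\beta}\|_q = \|\v{x}\|_{E_q(\mat{H})}
\end{equation*}
is a norm on $\reals^n$ whose unit ball is $E_q(\mat{H})$. Substituting $\v{x} = \mat{G}\v{\alpha} + \v{c} - \v{d}$ and recognizing $\v{s} := \v{c} + \mat{G}\v{\alpha}$ as a generic element of $\Ein$, one obtains
\begin{equation*}
    r(\Ein, \Ecirc) = \max_{\v{s}\in\Ein} \|\v{s} - \v{d}\|_{E_q(\mat{H})},
\end{equation*}
which is exactly the specialization of \eqref{eq:r_def_general} to $\widecheck{S} = \Ein$ and $\widehat{S} = \Ecirc$.

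From here it is immediate that $r(\Ein,\Ecirc)$ is the smallest scalar making the inclusion $\Ein \subseteq r(\Ein,\Ecirc)\cdot(\Ecirc - \v{d}) + \v{d}$ hold: by definition of the norm induced by $\Ecirc$, the condition $\|\v{s} - \v{d}\|_{E_q(\mat{H})} \leq r$ is equivalent to $\v{s} - \v{d} \in r\cdot E_q(\mat{H})$, so taking the maximum over $\v{s}\in\Ein$ picks out the smallest $r$ for which the inclusion holds. Setting $r=1$ then yields that $\Ein \subseteq \Ecirc$ if and only if $r(\Ein,\Ecirc)\leq 1$.

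For the degenerate case one must interpret the inner minimization as $+\infty$ when the constraint $\mat{H}\v{\beta} = \mat{G}\v{\alpha} + \v{c} - \v{d}$ has no solution, which happens precisely when $\mat{G}\v{\alpha} + \v{c} - \v{d} \notin \text{Im}(\mat{H})$. If \eqref{eq:rank_constraint} fails, every column of $\mat{G}$ together with $\v{c}-\v{d}$ lies in $\text{Im}(\mat{H})$, so $\Ein \subseteq \v{d} + \text{Im}(\mat{H})$, and the argument above can be repeated verbatim after restricting to that affine subspace, where Proposition~\ref{prop:ellipsotope_norm} applies since $\mat{H}$ has full rank when regarded as a map onto $\text{Im}(\mat{H})$. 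If instead \eqref{eq:rank_constraint} holds, then I would exhibit an $\v{\alpha}$ with $\|\v{\alpha}\|_p \leq 1$ for which $\mat{G}\v{\alpha} + \v{c} - \v{d} \notin \text{Im}(\mat{H})$: if $\v{c}-\v{d}\notin\text{Im}(\mat{H})$, take $\v{\alpha}=\v{0}$; otherwise pick a column $\v{g}_j$ of $\mat{G}$ outside $\text{Im}(\mat{H})$ and set $\v{\alpha} = \varepsilon\v{e}_j$ for a sufficiently small $\varepsilon>0$. This simultaneously witnesses $r(\Ein,\Ecirc)=\infty$ and shows that no finite scaling $r\cdot(\Ecirc-\v{d})+\v{d}$, which is always contained in $\v{d}+\text{Im}(\mat{H})$, can cover $\Ein$, so the convention is consistent.

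The main obstacle is treating the degenerate case cleanly: one must justify the $+\infty$ convention for infeasible inner minimizations and verify that the rank condition \eqref{eq:rank_constraint} neatly dichotomizes the situation into a case reducible to the non-degenerate setting and one where containment fails for every finite $r$. Once that dichotomy is in place, the rest of the proof is a direct application of Proposition~\ref{prop:ellipsotope_norm} combined with the general identity \eqref{eq:r_def_general}.
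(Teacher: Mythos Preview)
Your proposal is correct and follows essentially the same approach as the paper: both identify the inner minimization as the gauge of $E_q(\mat{H})$, argue that the max-min expression picks out the smallest scaling factor, and handle the degenerate case by the same rank-based dichotomy (reduce to the non-degenerate case when \eqref{eq:rank_constraint} fails, and exhibit an infeasible $\v{\alpha}$ when it holds). The only cosmetic differences are that you invoke Proposition~\ref{prop:ellipsotope_norm} and \eqref{eq:r_def_general} directly while the paper unfolds those steps from the definitions, and the paper makes the reduction in the degenerate-but-compatible case explicit via an SVD projection rather than speaking of ``restricting to the affine subspace.''
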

	
	\begin{proof}
		First, let us assume $\Ecirc$ is non-degenerate. For a point $\v{p}\in\reals^n$ and a scalar $\rho \geq 0$, by definition $\v{p} \in \rho \cdot (\Ecirc-\v{d}) + \v{d}$ if and only if there exists a $\v{\beta}'\in\reals^l$ such that $\rho\mat{H}\v{\beta}' + \v{d} = \v{p}$ with $\nnorm{\v{\beta}'}_q \leq 1$, which can occur if and only if there exists a $\v{\beta} \in \reals^l$ such that $\mat{H}\v{\beta}+\v{d} = \v{p}$ with $\nnorm{\v{\beta}}_q \leq \rho$. This can be reformulated as
		\begin{equation}
			\label{eq:grr_manually_defining_containment}
			\min_{\mat{H}\v{\beta} = \v{p} - \v{d}} \nnorm{\v{\beta}}_q \leq \rho.
		\end{equation}
		Therefore, $\Ein \subseteq \rho \cdot(\Ecirc - \v{d}) + \v{d}$ can be verified by taking the maximum of the left-hand side of \eqref{eq:grr_manually_defining_containment} for $\v{p} = \mat{G}\v{\alpha} + \v{c}$ over all $\v{\alpha}\in\reals^{m}$ satisfying $\norm{\v{\alpha}}_p \leq 1$.
		
		If $\Ecirc$ is degenerate, we have to distinguish two cases:
		\begin{itemize}
			\item If \eqref{eq:rank_constraint} holds, then $\text{Im}\big(\big[\begin{matrix}\mat{G} & \v{c}-\v{d}\end{matrix}\;\big]\big) \not\subseteq \text{Im}(\mat{H})$. This means there exists an $\v{\alpha}$ with $\norm{\v{\alpha}}_{p}\leq 1$ such that there is no $\v{\beta}$ solving
			\begin{equation}
				\label{eq:equation_constraint_ellipsoids}
				\mat{H}\v{\beta} = \mat{G}\v{\alpha} + \v{c} - \v{d}.
			\end{equation}
			Thus, the minimum in \eqref{eq:ellipsotope_containment} is taken over an empty set, which by convention yields the value $\infty$, so $r(\Ein,\Ecirc) = \infty$. This corresponds to the case where $\Ein$ does not lie in the same affine subspace as $\Ecirc$ (see also \cite[Theorem 16.4]{roman_advanced_2008}), and thus there does not exist any scalar $r>0$ such that $\Ein \subseteq r \cdot(\Ecirc - \v{d}) + \v{d}$, so that we may write $r(\Ein, \Ecirc) = \infty$.
			\item If \eqref{eq:rank_constraint} does not hold, $\text{Im}\big(\big[\begin{matrix}\mat{G} & \v{c}-\v{d}\end{matrix}\;\big]\big) \subseteq \text{Im}(\mat{H})$ follows, which means both ellipsotopes lie in the same affine subspace. This can be reduced to the case where $\Ecirc$ is non-degenerate, by projecting both ellipsotopes onto the range of $\mat{H}$. Specifically, if $k = \rank(\mat{H})$ and $\mat{H} = \mat{U}\,\mat{\Sigma}\,\mat{V}^\top$ is the singular value decomposition, we have
			\begin{equation}
				r(\Ein, \Ecirc) = r(\mat{P}\Ein, \mat{P}\Ecirc), \text{ with } \mat{P} = \begin{bmatrix}\mat{I}_k & \mat{0}_{k\times n}\end{bmatrix} \mat{U}^\top.
			\end{equation}
			This is because, if $\mat{H}$ does not have full rank, $\mat{\Sigma}$ has the form
			\begin{equation}
				\mat{\Sigma} = \begin{bmatrix}
					\mat{D} & \mat{0}\\
					\mat{0} & \mat{0}
				\end{bmatrix},
			\end{equation}
			for some diagonal matrix $\mat{D} \in \reals^{k\times k}$.
		\end{itemize}
	\end{proof}
	As shown in \cite[Corollary 3.2.]{kulmburg2023generalized}, if $\Ecirc$ is non-degenerate, the expression \eqref{eq:ellipsotope_containment} can also be computed through\footnote{For clarity, in \cite{kulmburg2023generalized} the notation is chosen so that one considers generalized $p \mapsto q$-norms. Therefore, for the remainder of the present article, whenever the reader is directed to a result from \cite{kulmburg2023generalized}, values of $p$ and $q$ in the present document should be transformed to $p \gets q^*$ and $q \gets p^*$ for \cite{kulmburg2023generalized}.}
	\begin{equation}
		\label{eq:ellipsotope_containment_equiv}
		r(\Ein,\Ecirc) = \max_{\norm{\mat{H}^\top\v{x}}_{q^*}\leq 1} \norm{\mat{G}^\top\v{x}}_{p^*} + \v{x}^\top(\v{c}-\v{d}),
	\end{equation}
	which is similar to a generalized matrix norm as described in \cite{kulmburg2023generalized}. Available strategies to compute such matrix norms \eqref{eq:ellipsotope_containment_equiv} differ based on the values of $p$ and $q$, making more precise approximations possible, under specific conditions. We distinguish between the following cases, which we address separately in the following sub-sections:
	\begin{itemize}
		\item The \textbf{symmetric-V-polytope-in-ellipsotope} problem when $p = 1$, i.e., $\Ein$ is a centrally symmetric V-polytope, is presented in Section \ref{sec:polytope-ellipsotope}.
		\item The \textbf{ellipsoid-in-ellipsoid} problem when $p=q=2$, i.e., $\Ein$ and $\Ecirc$ are ellipsoids, is presented in Section \ref{sec:ellipsoid-ellipsoid}.
		\item The \textbf{zonotope-in-ellipsotope} problem when $p=\infty$, i.e., $\Ein$ is a zonotope, is presented in Section \ref{sec:zonotope-ellipsotope}.
		\item The \textbf{general ellipsotope-in-ellipsotope} problem, which encompasses all other cases for $p$ and $q$, is presented in Section \ref{sec:general_ellipsotope-ellipsotope}.
	\end{itemize}
	\subsection{The Symmetric-V-Polytope-in-Ellipsotope Containment Problem}
	\label{sec:polytope-ellipsotope}
	The function
	\begin{equation*}
		\nnorm{\mat{G}\v{\alpha} + \v{c}-\v{d}\,}_{\Ecirc} = \min_{\mat{H}\v{\beta} = \mat{G}\v{\alpha} + \v{c} - \v{d}}\nnorm{\v{\beta}}_q
	\end{equation*}
	is convex in $\v{\alpha}$, so by the Bauer Maximum principle it suffices to evaluate $\nnorm{\mat{G}\v{\alpha} + \v{c}-\v{d}\,}_{\Ecirc}$ for all $\v{\alpha} = \pm\v{e}_i$, where $i=1,...,m$. Those are $2m$ points, each of which requires solving a nonlinear but convex optimization problem. According to Remark \ref{rmk:optimization_complexity}, this entire procedure can be done using $\mathcal{O}(ml^{3.5}\ln(ml))$ operations, thus this problem is in $\FPTAS$.
	\subsection{The Ellipsoid-in-Ellipsoid Containment Problem}
	\label{sec:ellipsoid-ellipsoid}
	The containment problem for ellipsoids has been studied in \cite{boyd_convex_2004, ellipsoids2022}. Similar ideas can be used to compute $r(\Ein, \Ecirc)$:
	\begin{theorem}
		\label{thm:ellipsoid-ellipsoid}
		If $\Ecirc$ is non-degenerate,
		\begin{equation}
			\label{eq:ellipsoid-ellipsoid}
			r(\Ein, \Ecirc) = \sqrt{\min_{\rho \in S} \rho},
		\end{equation}
		where $S$ is the (convex) set of elements $\rho$ satisfying
		\begin{equation}
			\label{eq:ellipsoid_rho_condition}
			\begin{pmatrix}\mat{\Theta}^\top\mat{\Theta} & \mat{\Theta}^\top\v{\theta}\\ \v{\theta}^{\,\top}\mat{\Theta} & \v{\theta}^{\,\top}\v{\theta} \end{pmatrix} - \rho\begin{pmatrix}\mat{0}_{m\times m} & \v{0}_m\\\v{0}_m^\top & 1\end{pmatrix} \preceq \delta \begin{pmatrix}\mat{I}_m & \v{0}_m\\\v{0}_m^\top & -1\end{pmatrix}
		\end{equation}
		for some $\delta \geq 0$, where $\mat{\Theta} = \mat{H}^+\mat{G}$ and $\v{\theta} = \mat{H}^+(\v{c} - \v{d})$. The optimization problem in \eqref{eq:ellipsoid-ellipsoid} is a semidefinite program that can be solved using $\mathcal{O}((m+1)^{4.5}\ln(m+1))$ operations according to Remark \ref{rmk:optimization_complexity}, thus this problem is in $\FPTAS$.
	\end{theorem}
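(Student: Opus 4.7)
The plan is to reduce the inner minimization using the Moore--Penrose pseudoinverse, turning \eqref{eq:ellipsotope_containment} into a pure quadratically constrained quadratic program in $\v{\alpha}$, and then to apply the S-procedure (S-lemma) to obtain the LMI~\eqref{eq:ellipsoid_rho_condition}. First, because $\Ecirc$ is non-degenerate, $\mat{H}$ has full row rank, so for any $\v{x} \in \reals^n$ the minimum $2$-norm solution of $\mat{H}\v{\beta} = \v{x}$ is $\v{\beta}^\star = \mat{H}^+\v{x}$ with $\norm{\v{\beta}^\star}_2 = \norm{\mat{H}^+\v{x}}_2$. Substituting $\v{x} = \mat{G}\v{\alpha}+\v{c}-\v{d}$ into \eqref{eq:ellipsotope_containment} with $p=q=2$ gives
\begin{equation*}
    r(\Ein,\Ecirc)^2 \;=\; \max_{\norm{\v{\alpha}}_2 \leq 1} \norm{\mat{\Theta}\v{\alpha} + \v{\theta}}_2^2.
\end{equation*}

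Next, I rewrite this maximum as the smallest $\rho \geq 0$ such that the quadratic inequality $\norm{\mat{\Theta}\v{\alpha}+\v{\theta}}_2^2 \leq \rho$ holds for every $\v{\alpha}$ with $\norm{\v{\alpha}}_2^2 \leq 1$. Introducing an auxiliary variable $t$ and setting $\v{y}=(\v{\alpha}^\top, t)^\top \in \reals^{m+1}$, I homogenize both sides into the indefinite quadratic forms $\tilde{F}(\v{y}) = \norm{\mat{\Theta}\v{\alpha}+\v{\theta}t}_2^2 - \rho t^2$ and $\tilde{G}(\v{y}) = \norm{\v{\alpha}}_2^2 - t^2$. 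A direct case analysis on the sign of $t$ shows that the implication ``$\tilde{G}(\v{y})\leq 0 \Rightarrow \tilde{F}(\v{y})\leq 0$ for all $\v{y}\in\reals^{m+1}$'' is equivalent to the original affine statement via the slice $t=1$. Since $\tilde{G}(\v{0}, 1) = -1 < 0$, Slater's condition for the S-lemma holds, and the implication is equivalent to the existence of a $\delta \geq 0$ such that $\tilde{F}(\v{y}) \leq \delta \tilde{G}(\v{y})$ for all $\v{y}$; writing both homogeneous forms as $\v{y}^\top(\cdot)\v{y}$ yields precisely the matrix inequality \eqref{eq:ellipsoid_rho_condition}, with the minimum over $\rho$ equal to $r(\Ein,\Ecirc)^2$.

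Convexity of $S$ then follows because it is the projection onto the $\rho$-coordinate of the convex $(\rho,\delta)$-sublevel set of an LMI, and the complexity claim comes from noting that minimizing the linear objective $\rho$ over this LMI is a semidefinite program with two scalar decision variables and a single $(m+1)\times(m+1)$ matrix constraint, to which Remark~\ref{rmk:optimization_complexity} applies. The main obstacle I anticipate is making the homogenization argument air-tight: one must verify both directions of the equivalence between the original affine problem on $\reals^m$ and the homogeneous cone problem on $\reals^{m+1}$ (in particular, handling $t=0$ separately), so that the hypotheses of the S-lemma are genuinely met; everything else reduces to routine matrix bookkeeping.
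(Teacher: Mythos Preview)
Your proposal is correct and follows essentially the same route as the paper: reduce the inner minimization via the pseudoinverse to obtain $r(\Ein,\Ecirc)^2 = \max_{\norm{\v{\alpha}}_2\leq 1}\norm{\mat{\Theta}\v{\alpha}+\v{\theta}}_2^2$, then apply the S-procedure (which the paper cites as the ``theorem of alternatives'' from \cite[Appendix~B.2]{boyd_convex_2004}) to obtain the LMI~\eqref{eq:ellipsoid_rho_condition}. Your direct use of the minimum-norm property of $\mat{H}^+$ under the full-row-rank assumption is a bit cleaner than the paper's explicit parameterization-and-cancellation argument, but the substance is identical.
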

	
	\begin{proof}
		We consider solutions for $\v{\beta}$ to the constraint in \eqref{eq:equation_constraint_ellipsoids}: By \cite[p. 112, Theorem 2]{james_generalised_1978}, $\v{\beta}$ can be expressed as
		\begin{equation*}
			\v{\beta} = \mat{H}^+(\mat{G}\v{\alpha} + \v{c} - \v{d}) + (\mat{I}_{l} - \mat{H}^+\mat{H})\v{\omega},
		\end{equation*}
		where $\v{\omega}\in\reals^{l}$ is arbitrary. Plugging this into \eqref{eq:ellipsotope_containment} yields
		\begin{equation*}
			r(\Ein,\Ecirc) = \max_{\norm{\v{\alpha}}_2\leq 1} \min_{\v{\omega}}\nnorm{\mat{H}^+(\mat{G}\v{\alpha} + \v{c} - \v{d}) + (\mat{I}_{l} - \mat{H}^+\mat{H})\v{\omega}}_2.
		\end{equation*}
		By \cite[p. 183]{planitz_3_1979}, using $\mat{F} := \mat{I}_{l} - \mat{H}^+\mat{H}$,
		\begin{equation*}
			r(\Ein,\Ecirc) = \max_{\norm{\v{\alpha}}_2\leq 1} \nnorm{\mat{H}^+(\mat{G}\v{\alpha} + \v{c} - \v{d}) + \mat{F}\,\mat{F}^+\mat{H}^+(\mat{G}\v{\alpha} + \v{c} - \v{d})}_2.
		\end{equation*}
		This can be simplified using the identities
		\begin{equation*}
			\mat{F}^+\mat{H}^+ = (\mat{H}\,\mat{F})^+ = (\mat{H} - \mat{H}\,\mat{H}^+\mat{H})^+ = \mat{0},
		\end{equation*}
		which yields
		\begin{equation*}
			\begin{split}
				&r(\Ein,\Ecirc) = \max_{\norm{\v{\alpha}}_2\leq 1} \nnorm{\mat{H}^+(\mat{G}\v{\alpha} + \v{c} - \v{d})}_2\\
				\Leftrightarrow \quad & r(\Ein,\Ecirc)^2 = \max_{\norm{\v{\alpha}}_2\leq 1} \v{\alpha}^\top\mat{\Theta}^\top\mat{\Theta}\v{\alpha} + 2\v{\theta}^\top\mat{\Theta}\v{\alpha} + \v{\theta}{\,\top}\v{\theta}.
			\end{split}
		\end{equation*}
		In other words, we are searching for the smallest scalar $\rho = r^2$ satisfying
		\begin{equation}
			\label{eq:ellipsoids_pre_alternative}
			\forall \v{\alpha}\in\reals^{m}, \quad \v{\alpha}^\top\v{\alpha}\leq 1 \Longrightarrow \v{\alpha}^\top\mat{\Theta}^\top\mat{\Theta}\v{\alpha} + 2\v{\theta}^{\,\top}\mat{\Theta}\v{\alpha} + \v{\theta}^{\,\top}\v{\theta} \leq \rho.
		\end{equation}
		According to the theorem of alternatives (see \cite[Appendix B.2, p. 655]{boyd_convex_2004}), \eqref{eq:ellipsoids_pre_alternative} is equivalent to \eqref{eq:ellipsoid_rho_condition}, which completes the proof.
	\end{proof}
	
	\subsection{The Zonotope-in-Ellipsotope Containment Problem}
	\label{sec:zonotope-ellipsotope}
	If $\Ein$ is a zonotope $\Zin$, we can simplify \eqref{eq:ellipsotope_containment} by reducing it to the case where $\Zin$ and $\Ecirc$ have center $\v{0}$:
	\begin{lemma}
		Let $\Zin = Z(\mat{G}, \v{c})$ and $\Ecirc = E_q(\mat{H}, \v{d})$ for $\v{c},\v{d}\in\reals^n$, $q\in [1,\infty]$, and $\mat{G}\in\reals^{n\times m}$, $\mat{H}\in\reals^{n\times l}$. Then
		\begin{equation}
			r(\Zin, \Ecirc) = r(Z(\mat{G}'), E_q(\mat{H})),
		\end{equation}
		where $\mat{G}' = \begin{bmatrix}\mat{G} & \v{c}-\v{d}\end{bmatrix}$.
	\end{lemma}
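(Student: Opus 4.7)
The plan is to invoke the characterization \eqref{eq:ellipsotope_containment} from Theorem \ref{definition:ellipsotope_containment} and reformulate both sides of the desired equality as maxima of a common convex, centrally symmetric function over two related sets. Define
\begin{equation*}
	F(\v{y}) \;=\; \min_{\mat{H}\v{\beta} = \v{y}} \nnorm{\v{\beta}}_p,
\end{equation*}
with the convention $F(\v{y}) = \infty$ if $\v{y} \notin \text{Im}(\mat{H})$. Then $F$ is convex in $\v{y}$ and satisfies $F(-\v{y}) = F(\v{y})$ (since one may negate the minimizer $\v{\beta}$), and by Theorem \ref{definition:ellipsotope_containment} we have
\begin{equation*}
    r(\Zin,\Ecirc) = \max_{\norm{\v{\alpha}}_\infty \leq 1} F\!\left(\mat{G}\v{\alpha} + \v{c} - \v{d}\right), \qquad r(Z(\mat{G}'), E_p(\mat{H})) = \max_{\nnorm{\v{\alpha}'}_\infty \leq 1} F\!\left(\mat{G}'\v{\alpha}'\right),
\end{equation*}
because the latter pair has coinciding centers $\v{0}$.

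For the inequality $r(\Zin,\Ecirc) \leq r(Z(\mat{G}'),E_p(\mat{H}))$, I would embed each feasible $\v{\alpha}$ for the left-hand maximum into the feasible set of the right-hand maximum by setting $\v{\alpha}' = (\v{\alpha}^\top, 1)^\top$, which satisfies $\nnorm{\v{\alpha}'}_\infty \leq 1$ and produces $\mat{G}'\v{\alpha}' = \mat{G}\v{\alpha} + (\v{c}-\v{d})$, hence the same value of $F$.

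For the reverse inequality, which is the main step, I would take an arbitrary $\v{\alpha}' = (\v{\alpha}^\top, t)^\top$ with $\nnorm{\v{\alpha}'}_\infty \leq 1$, write $t = 2\lambda - 1$ with $\lambda = (t+1)/2 \in [0,1]$, and decompose
\begin{equation*}
    \mat{G}'\v{\alpha}' = \lambda\bigl(\mat{G}\v{\alpha} + (\v{c}-\v{d})\bigr) + (1-\lambda)\bigl(\mat{G}\v{\alpha} - (\v{c}-\v{d})\bigr).
\end{equation*}
Applying convexity of $F$ gives an upper bound by a convex combination of $F(\mat{G}\v{\alpha} \pm (\v{c}-\v{d}))$, and central symmetry converts the minus case via $F(\mat{G}\v{\alpha} - (\v{c}-\v{d})) = F(\mat{G}(-\v{\alpha}) + (\v{c}-\v{d}))$, where $-\v{\alpha}$ is again in the feasible set $\norm{\cdot}_\infty \leq 1$. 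Both summands are therefore bounded by $r(\Zin,\Ecirc)$, and taking the max over $\v{\alpha}'$ yields the claim.

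The only part I expect to need explicit care is the degenerate case, where $F$ may take the value $\infty$. The decomposition argument still goes through because the image condition \eqref{eq:rank_constraint} depends only on $\text{Im}([\mat{G} \;\; \v{c}-\v{d} \;\; \mat{H}])$ compared to $\text{Im}(\mat{H})$, and this combined image is identical for both containment problems; hence $r(\Zin,\Ecirc) = \infty$ if and only if $r(Z(\mat{G}'), E_p(\mat{H})) = \infty$, preserving the equality in the extended sense. The convexity and central symmetry of $F$ do the real work, and no case analysis on $p$ is required, so the proof should be short.
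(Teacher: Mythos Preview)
Your proposal is correct and takes essentially the same approach as the paper: both recognize that the inner minimization defines a convex, centrally symmetric function of $\v{y}$ (the paper invokes the ellipsotope norm from Proposition~\ref{prop:ellipsotope_norm}), and both exploit convexity together with symmetry to pass between the auxiliary coordinate $t\in\{\pm1\}$ and $t\in[-1,1]$. The paper phrases the convexity step via the Bauer maximum principle rather than your explicit convex decomposition, and does not separately address the degenerate case, but the core argument is identical.
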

	
	\begin{proof}
		We rewrite \eqref{eq:ellipsotope_containment} using the ellipsotope norm introduced in Proposition \ref{prop:ellipsotope_norm}:
		\begin{equation}
			r(\Zin,\Ecirc) = \max_{\norm{\v{\alpha}}_{\infty}\leq 1} \norm{\mat{G}\v{\alpha} + \v{c} - \v{d}\,}_{E_q(\mat{H})}.
		\end{equation}
		Both $\norm_{E_q(\mat{H})}$ and $\norm_{\infty}$ are symmetric functions, so applying the variable transformation $\v{\alpha} \mapsto -\v{\alpha}$ shows
		\begin{align*}
			&\max_{\norm{\v{\alpha}}_{\infty}\leq 1}\norm{\mat{G}\v{\alpha} + (\v{c}-\v{d})}_{E_q(\mat{H})} \\
			= &\max_{\norm{\v{\alpha}}_{\infty}\leq 1}\norm{\mat{G}\v{\alpha} - (\v{c}-\v{d})}_{E_q(\mat{H})},
		\end{align*}
		from which we deduce
		\begin{equation*}
			r(\Zin,\Ecirc) = \max_{\substack{\norm{\v{\alpha}}_{\infty}\leq 1\\\gamma\in\{\pm1\}}}\norm{\mat{G}\v{\alpha} + \gamma(\v{c}-\v{d})}_{E_q(\mat{H})}.
		\end{equation*}
		The function $\nnorm{\mat{G}\v{\alpha} + \gamma(\v{c}-\v{d})}_{E_q(\mat{H})}$ is convex in $\gamma$, thus by the Bauer maximum principle its maximum over $\gamma \in \{\pm 1\}$ is the same as the maximum over $\gamma \in [-1,1]$, so
		\begin{equation*}
			\begin{split}
				r(\Zin,\Ecirc) &= \max_{\substack{\v{x} = \mat{G}'\v{\alpha}'\\\nnorm{\v{\alpha}'}_{\infty}\leq 1}} \nnorm{\v{x}}_{E_q(\mat{H})} = r(Z(\mat{G}'), E_q(\mat{H})).
			\end{split}
		\end{equation*}
	\end{proof}
	
	\subsubsection{Linear Relaxation}
	\label{subsec:presentation_st}
	If $\Ecirc$ is also a zonotope $\Zcirc$, we have the following approximation of $r(\Zin,\Zcirc)$ \cite[Corollary 4]{sadraddini_linear_2019}:
	\begin{equation}
		\ST_{\infty}(\mat{G}, \mat{H}) := \min_{\mat{H}\,\mat{X}=\mat{G}}\norm{\mat{X}}_{\infty}.
	\end{equation}
	The authors showed that if $\ST_{\infty}(\mat{G}, \mat{H}) \leq 1$, then $Z(\mat{G}) \subseteq Z(\mat{H})$, however no conclusion could be made if $\ST_{\infty}(\mat{G}, \mat{H}) > 1$. This is equivalent to
	\begin{equation}
		\label{eq:ZC<ST}
		r(\Zin,\Zcirc) \leq \ST_{\infty}(\mat{G}, \mat{H}).
	\end{equation}
	We can generalize this relaxation to cases where $\Zcirc$ is an ellipsotope using results from \cite{kulmburg2023generalized}, and additionally we can also provide a bound on the accuracy of the algorithm:
	\begin{theorem}
		\label{thm:main_thm}
		Let $\mat{G} \in \reals^{n\times m}$, $\mat{H} \in \reals^{n\times l}$, and assume $\mat{H}$ is surjective. Then
		\begin{equation}
			\label{eq:ST_def}
			\ST_q(\mat{G}, \mat{H}) := \min_{\mat{H}\,\mat{X}=\mat{G}}\norm{\mat{X}}_{L_{1,q}^\top}
		\end{equation}
		satisfies
		\begin{equation}
			\label{eq:r_equivalence_proof}
			\begin{split}
				r(Z(\mat{G}),E_q(\mat{H})) &\leq \ST_q(\mat{G}, \mat{H})\\
				&\leq \frac{\gamma_{q^*}\sqrt{m}}{\gamma_1} r(Z(\mat{G}),E_q(\mat{H})),
			\end{split}
		\end{equation}
		where $\gamma_{k}$ is the $k$-th root of the $k$-th moment of a standard normal distribution, i.e.,
		\begin{equation}
			\label{eq:gaussian_moment}
			\gamma_k = \left(\frac{2^{k/2}}{\sqrt{\pi}}\Gamma\left(\frac{k+1}{2}\right)\right)^{1/k},
		\end{equation}
		and $\Gamma$ is the Gamma function.
		Equivalently, we have that
		\begin{itemize}
			\item if $\ST_q(\mat{G}, \mat{H}) \leq 1$, then $Z(\mat{G}) \subseteq E_q(\mat{H})$.
			\item if $\ST_q(\mat{G}, \mat{H}) > \frac{\gamma_{q^*}\sqrt{m}}{\gamma_1}$, then $Z(\mat{G}) \not\subseteq E_q(\mat{H})$.
		\end{itemize}
	\end{theorem}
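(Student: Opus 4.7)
The theorem packages two quantitative inequalities, $r \leq \ST_q$ and $\ST_q \leq (\gamma_{q^*}\sqrt{m}/\gamma_1)\,r$, together with two bulleted containment implications that follow immediately by combining these inequalities with Theorem~\ref{definition:ellipsotope_containment}: $\ST_q(\mat{G},\mat{H}) \leq 1$ forces $r(Z(\mat{G}),E_q(\mat{H})) \leq 1$ and hence $Z(\mat{G}) \subseteq E_q(\mat{H})$, while $\ST_q(\mat{G},\mat{H}) > \gamma_{q^*}\sqrt{m}/\gamma_1$ forces $r(Z(\mat{G}),E_q(\mat{H})) > 1$ and the containment must fail. I would therefore focus exclusively on proving the two inequalities.

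The lower bound I would prove by direct feasibility. Fix any $\mat{X}$ with $\mat{H}\mat{X} = \mat{G}$ and any $\v{\alpha}$ with $\|\v{\alpha}\|_\infty \leq 1$; the vector $\v{\beta} := \mat{X}\v{\alpha}$ is admissible in the inner minimisation of \eqref{eq:ellipsotope_containment} since $\mat{H}\v{\beta} = \mat{G}\v{\alpha}$. A row-wise Hölder step bounds the $i$-th entry of $\mat{X}\v{\alpha}$ in absolute value by the $1$-norm of the $i$-th row of $\mat{X}$ (using $\|\v{\alpha}\|_\infty \leq 1$), so taking the $q$-norm of these entries gives $\|\mat{X}\v{\alpha}\|_q \leq \|\mat{X}\|_{L_{1,q}^\top}$ by Definition~\ref{def:matrix_L_p_q_norms}. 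Maximising over $\v{\alpha}$ and minimising over feasible $\mat{X}$ yields $r \leq \ST_q$.

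The upper bound I would approach through duality combined with Gaussian rounding. The dual formulation \eqref{eq:ellipsotope_containment_equiv} (with $p = \infty$ and $\v{c} = \v{d} = \v{0}$) rewrites $r$ as $\max\{\|\mat{G}^\top \v{x}\|_1 : \|\mat{H}^\top \v{x}\|_{q^*} \leq 1\}$, while Lagrangian duality applied to $\ST_q$, using that the dual of $\|\cdot\|_{L_{1,q}^\top}$ is $\|\cdot\|_{L_{\infty, q^*}^\top}$, rewrites $\ST_q$ as $\max\{\operatorname{tr}(\mat{G}^\top \mat{\Lambda}) : \|\mat{H}^\top \mat{\Lambda}\|_{L_{\infty, q^*}^\top} \leq 1\}$. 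The two dual problems differ only in that $\v{x}$ is a vector while $\mat{\Lambda}$ is a general $n\times m$ matrix, so the approximation gap is precisely the cost of this restriction. Starting from an optimal $\mat{\Lambda}^*$ I would sample $\v{g} \sim N(\v{0}, \mat{I}_m)$ and set $\v{x} := \kappa\,\mat{\Lambda}^* \v{g}$ for a suitable $\kappa > 0$; the Gaussian-moment identity $E[|\v{a}^\top \v{g}|^k]^{1/k} = \gamma_k \|\v{a}\|_2$ embodied in \eqref{eq:gaussian_moment} then controls the expected dual constraint $\|\mat{H}^\top \v{x}\|_{q^*}$ (contributing $\gamma_{q^*}$) and lower-bounds the expected objective $\|\mat{G}^\top \v{x}\|_1$ (contributing $\gamma_1$). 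The dimensional factor $\sqrt{m}$ enters through Cauchy--Schwarz when translating the $1$-norms appearing in $\|\cdot\|_{L_{\infty, q^*}^\top}$ into the $2$-norms produced by the Gaussian expectation.

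The main obstacle is this rounding step: the scaling $\kappa$ must be chosen so that $\|\mat{H}^\top \v{x}\|_{q^*} \leq 1$ holds with positive probability (which, together with a standard derandomisation, produces a deterministic feasible $\v{x}$), while the objective $\|\mat{G}^\top \v{x}\|_1$ still retains at least a $\gamma_1/(\gamma_{q^*}\sqrt{m})$ fraction of $\operatorname{tr}(\mat{G}^\top \mat{\Lambda}^*) = \ST_q$. Showing that the Gaussian-moment constants and the dimensional penalty combine \emph{exactly} into $\gamma_{q^*}\sqrt{m}/\gamma_1$ is the technically delicate part of the argument, and it is here that the generalised matrix-norm machinery of \cite{kulmburg2023generalized} naturally enters and closes the proof.
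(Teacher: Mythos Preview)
Your proposal is correct and aligns with the paper's approach. The paper's own proof is extremely terse: it invokes the dual representation \eqref{eq:ellipsotope_containment_equiv} and then cites \cite[Theorem~5.9]{kulmburg2023generalized} as a black box for the two-sided bound \eqref{eq:r_equivalence_proof}, deriving the bulleted implications exactly as you do. Your sketch actually unpacks more than the paper does: the direct feasibility argument for $r \leq \ST_q$ and the duality/Gaussian-rounding outline for the reverse inequality are precisely the ingredients one finds inside the cited reference, so you are reconstructing its content rather than departing from it. The one point the paper mentions that you omit is the degenerate case (when $\mat{H}$ does not have full rank), which it handles by projecting onto $\mathrm{Im}(\mat{H})$; this is routine but worth flagging for completeness.
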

	
	\begin{proof}
		If $\mat{H}$ is surjective, combining \eqref{eq:ellipsotope_containment_equiv} and \cite[Theorem 5.9.]{kulmburg2023generalized}, yields \eqref{eq:r_equivalence_proof}.
		Since $Z(\mat{G}) \subseteq E_q(\mat{H})$ holds if and only if $r(Z(\mat{G}),E_q(\mat{H})) \leq 1$, the first inequality in \eqref{eq:r_equivalence_proof} shows that $\ST_q(\mat{G}, \mat{H}) \leq 1$ implies $r(Z(\mat{G}),E_q(\mat{H}))\leq 1$, and thus $Z(\mat{G}) \subseteq E_q(\mat{H})$. Conversely, if $\ST_q(\mat{G}, \mat{H}) > \frac{\gamma_{q^*}\sqrt{m}}{\gamma_1}$, the second inequality in \eqref{eq:r_equivalence_proof} implies $Z(\mat{G}) \not\subseteq E_q(\mat{H})$.
	\end{proof}
	
	Note that $\ST_q(\mat{G}, \mat{H})$ can be computed in polynomial time using $\mathcal{O}((ml)^{3.5}\ln(ml))$ operations according to Remark \ref{rmk:optimization_complexity}. In particular, for $q=1$ and $q=\infty$, linear programming can be used for \eqref{eq:ST_def}, whereas convex quadratic optimization can solve \eqref{eq:ST_def} for $q=2$.
	
	\medskip
	
	\subsubsection{Exactness of the Linear Relaxation}
	Beyond the bound given in Theorem \ref{thm:main_thm}, in many cases $\ST_q(\mat{G}, \mat{H}) = r(\Zin,\Ecirc)$. This is particularly apparent for the zonotope-in-zonotope containment problem \cite{sadraddini_linear_2019}.
	Before we can introduce the next theorem, we need to deduce the dual formulation of $\ST_q$: By \cite[Proposition 3.1]{kulmburg2023generalized} and \cite[Lemma 2.3]{kulmburg2023generalized}
	\begin{equation}
		\label{eq:ST_dual}
		\ST_q(\mat{G}, \mat{H}) = \max_{\|\mat{H}^\top\mat{Y}\|_{L_{\infty,q^*}^\top}\leq1}\trace(\mat{G}^\top\mat{Y}).
	\end{equation}
	Note that \eqref{eq:ST_dual} can still be solved in polynomial time, but this time using $\mathcal{O}((mn)^{3.5}\ln(mn))$ operations, since the variable matrix $\mat{X}$ is an $n\times m$-matrix.
	\begin{theorem}
		\label{thm:resonances}
		Let $\mat{G} \in \reals^{n\times m}$, $\mat{H} \in \reals^{n\times l}$, and suppose
		\begin{equation}
			\label{eq:st_dual_repeat}
			\mat{Y} = \argmax_{\|\mat{H}^\top\mat{Y}'\|_{L_{\infty,q^*}^\top}\leq1}\trace(\mat{G}^\top\mat{Y}').
		\end{equation}
		If all columns $\v{y}_i$ of $\mat{Y}$ are identical up to their sign, then $\ST_q(\mat{G},\mat{H}) = r(Z(\mat{G}), E_q(\mat{H}))$. In other words:
		\begin{equation}
			\label{eq:condition_resonance}
			\begin{split}
				&\forall i,j \quad \v{y}_i = \v{y}_j \text{ or } \v{y}_i = -\v{y}_j\\
				& \quad \Rightarrow \ST_q(\mat{G}, \mat{H}) = r(Z(\mat{G}), E_q(\mat{H})).
			\end{split}
		\end{equation}
		In particular, if $q=\infty$ and $\mat{H}$ is square and invertible (i.e., $\Ecirc$ is a parallelotope), condition \eqref{eq:condition_resonance} is always satisfied.
	\end{theorem}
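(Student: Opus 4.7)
The plan is to exploit the dual formulation \eqref{eq:ST_dual} of $\ST_q$ together with the characterization \eqref{eq:ellipsotope_containment_equiv} of $r(Z(\mat{G}),E_q(\mat{H}))$ as $\max_{\|\mat{H}^\top\v{x}\|_{q^*}\leq 1}\|\mat{G}^\top\v{x}\|_1$. The key observation is that, under the hypothesis of the theorem, the optimum $\mat{Y}$ is a rank-one matrix $\mat{Y}=\v{y}\v{\sigma}^\top$ with $\v{\sigma}\in\{-1,+1\}^m$, and such matrices are essentially parametrized by the same data as feasible points of the vector-valued maximum in \eqref{eq:ellipsotope_containment_equiv}.

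First, I would write $\mat{Y}=\v{y}\v{\sigma}^\top$ and observe that $\mat{H}^\top\mat{Y}=(\mat{H}^\top\v{y})\v{\sigma}^\top$, so that row $k$ of $\mat{H}^\top\mat{Y}$ is $(\mat{H}^\top\v{y})_k\cdot\v{\sigma}^\top$. Since $\|\v{\sigma}\|_\infty=1$, the row-wise $\infty$-norm equals $|(\mat{H}^\top\v{y})_k|$, and the outer $q^*$-norm over these $l$ values is exactly $\|\mat{H}^\top\v{y}\|_{q^*}$. Hence the constraint $\|\mat{H}^\top\mat{Y}\|_{L_{\infty,q^*}^\top}\leq 1$ collapses to $\|\mat{H}^\top\v{y}\|_{q^*}\leq 1$. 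The objective becomes $\trace(\mat{G}^\top\mat{Y})=\v{\sigma}^\top\mat{G}^\top\v{y}\leq \|\mat{G}^\top\v{y}\|_1$. Combining these gives
\begin{equation*}
    \ST_q(\mat{G},\mat{H})=\trace(\mat{G}^\top\mat{Y})\leq \|\mat{G}^\top\v{y}\|_1\leq r(Z(\mat{G}),E_q(\mat{H})),
\end{equation*}
and since Theorem~\ref{thm:main_thm} provides the reverse inequality, equality follows.

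For the parallelotope case ($q=\infty$, $\mat{H}$ square and invertible), the plan is to exhibit an optimum $\mat{Y}$ of the claimed form explicitly. The equation $\mat{H}\mat{X}=\mat{G}$ has the unique solution $\mat{X}^*=\mat{H}^{-1}\mat{G}$, so $\ST_\infty(\mat{G},\mat{H})=\|\mat{X}^*\|_\infty$, which is attained at some row $i^*$ of $\mat{X}^*$ whose $\ell_1$-norm is maximal. Setting $\sigma_j=\sign(X^*_{i^*j})$, $\v{y}=\mat{H}^{-\top}\v{e}_{i^*}$, and $\mat{Y}^*=\v{y}\v{\sigma}^\top$, a short computation shows that $\mat{H}^\top\mat{Y}^*=\v{e}_{i^*}\v{\sigma}^\top$ satisfies $\|\mat{H}^\top\mat{Y}^*\|_{L_{\infty,1}^\top}=1$ and $\trace(\mat{G}^\top\mat{Y}^*)=\sum_j|X^*_{i^*j}|=\|\mat{H}^{-1}\mat{G}\|_\infty$, so $\mat{Y}^*$ is optimal and its columns $\sigma_j\v{y}$ are $\pm$-copies of $\v{y}$, as required.

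The main (minor) obstacle is the compact bookkeeping of the $L_{\infty,q^*}^\top$-norm of rank-one matrices of the form $\v{y}\v{\sigma}^\top$ with $\v{\sigma}\in\{-1,+1\}^m$; once this identity is established, both parts follow essentially by substitution. No appeal to \cite{kulmburg2023generalized} beyond the already-stated Theorem~\ref{thm:main_thm} and identity \eqref{eq:ST_dual} is required.
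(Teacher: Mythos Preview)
Your proposal is correct and follows essentially the same route as the paper: write $\mat{Y}=\v{y}\v{\sigma}^\top$, collapse the $L_{\infty,q^*}^\top$-constraint to $\|\mat{H}^\top\v{y}\|_{q^*}\le 1$, bound $\trace(\mat{G}^\top\mat{Y})=\v{\sigma}^\top\mat{G}^\top\v{y}\le\|\mat{G}^\top\v{y}\|_1\le r(Z(\mat{G}),E_q(\mat{H}))$, and close with Theorem~\ref{thm:main_thm}. The only notable difference is in the parallelotope case: the paper performs the change of variables $\mat{Z}'=\mat{H}^\top\mat{Y}'$ and invokes the Bauer maximum principle to conclude that the optimum $\mat{Z}$ lies at an extreme point $\v{e}_i\v{\sigma}^\top$ of the $L_{\infty,1}^\top$-unit ball, whereas you construct the optimal $\mat{Y}^*$ explicitly from the primal solution $\mat{X}^*=\mat{H}^{-1}\mat{G}$ by picking the row of maximal $\ell_1$-norm. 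Both arguments are equally short; yours is slightly more hands-on and avoids identifying the extreme points of that ball, while the paper's is more structural. One cosmetic point: when you set $\sigma_j=\sign(X^*_{i^*j})$, pick $\sigma_j\in\{\pm1\}$ arbitrarily whenever $X^*_{i^*j}=0$ so that $\v{\sigma}\in\{-1,+1\}^m$ as required.
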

	
	\begin{proof}
		Suppose $\mat{Y}$ maximizes \eqref{eq:st_dual_repeat}, so that
		\begin{equation}
			\label{eq:condition_X_resonancing}
			\nnorm{\mat{H}^\top\mat{Y}}_{L_{\infty,q^*}^\top}\leq 1.
		\end{equation}
		If $\mat{Y}$ satisfies \eqref{eq:condition_resonance}, $\mat{Y} = \v{y}\v{\sigma}^\top$ for some $\v{y}\in\reals^n$ and $\v{\sigma}\in \{\pm1\}^l$, so using \eqref{eq:condition_X_resonancing} we get
		\begin{equation}
			\label{eq:detailled_constraint_resonancing}
			\begin{split}
				1 \geq \left[\sum_i \max_j |\v{h}_i^\top\v{y}\sigma_j|^{q^*}\right]^{1/q^*} = \nnorm{\mat{H}^\top\v{y}}_{q^*},
			\end{split}
		\end{equation}
		which means that
		\begin{align*}
			\ST_q(\mat{G}, \mat{H}) &= \trace(\mat{G}^\top\mat{Y}) && \text{(definition of $\mat{Y}$)}\\
			&= \trace(\mat{G}^\top\v{y}\v{\sigma}^\top) && \text{($\mat{Y} = \v{y}\v{\sigma}^\top$)}\\
			&= \trace(\v{\sigma}^\top\mat{G}^\top\v{y}) &&\text{($\trace(\mat{M}\,\mat{L}) = \trace(\mat{L}\,\mat{M})$)}\\
			&\leq \max_{\substack{\norm{\v{s}}_{\infty}\leq1\\\norm{\mat{H}^\top\v{z}}_{q^*}\leq1}}\v{s}^\top\mat{G}^\top\v{z} & \begin{split}&\text{(definition of $\max$}\\ & \text{using \eqref{eq:detailled_constraint_resonancing})}\end{split}\\
			&= \max_{\norm{\mat{H}^\top\v{z}}_{q^*}\leq1}\nnorm{\mat{G}^\top\v{z}}_1 &&\text{(Dual of the $\infty$-norm)}\\
			&= r(Z(\mat{G}), E_q(\mat{H})) && \text{(using \eqref{eq:ellipsotope_containment_equiv})}
		\end{align*}
		where for the last equality we used duality and \eqref{eq:ellipsotope_containment_equiv}. This proves  $r(\Zin, \Ecirc) \geq \ST_q(\mat{G}, \mat{H})$; since $r(\Zin, \Ecirc) \leq \ST_q(\mat{G}, \mat{H})$ by Theorem \ref{thm:main_thm}, $\ST_q(\mat{G}, \mat{H}) = r(\Zin, \Ecirc)$ follows. As for the special case when $\Ecirc$ is a parallelotope, using the variable transformation $\mat{Z}' = \mat{H}^\top\mat{Y}'$ reveals $\mat{Y} = \left(\mat{H}^\top\right)^{-1}\mat{Z}$, with
		\begin{equation}
			\label{eq:parallelotope_st_Z}
			\mat{Z} = \argmax_{\|\mat{Z}'\|_{L_{\infty,1}^\top}\leq1}\trace\left(\mat{G}^\top\left(\mat{H}^{\top}\right)^{-1}\mat{Z}'\right).
		\end{equation}
		By the Bauer maximum principle, the maximum in \eqref{eq:parallelotope_st_Z} is achieved at one of the extreme points of the constraints $\|\mat{Z}'\|_{L_{\infty,1}^\top} \leq 1$. Those extreme points can easily be seen to be of the form $\v{e}_i\v{\sigma}^\top$ for some $i=1,...,n$ and $\v{\sigma}\in \{\pm1\}^m$, which proves $\mat{Y}$ must satisfy \eqref{eq:condition_X_resonancing}.
	\end{proof}
	
	\subsubsection{Semidefinite Relaxation}
	For $q\in(1,2]$, while the approximation from Theorem \ref{thm:main_thm} remains valid, the following one is tighter:
	\begin{theorem}
		\label{thm:zono_in_ellipsoid}
		Let $\mat{G}\in\reals^{n\times m}$, $\mat{H}\in\reals^{n\times l}$, where $\mat{H}$ is surjective. For $q\in(1,2]$, we define
		\begin{equation}
			\label{eq:ZSR_def}
			\ZSR_q(\mat{G}, \mat{H}) := \frac{1}{2}\min_{(\v{v},\v{w})\in K} \norm{\v{v}}_1+\norm{\v{w}}_{\frac{q}{2-q}},
		\end{equation}
		where $K$ is the set of tuples $(\v{v}, \v{w}) \in \reals^{l}\times \reals^{m}$ satisfying
		\begin{equation}
			\begin{bmatrix}\Diag(\v{v}) & \mat{0}\\\mat{0} & \mat{H}\Diag(\v{w})\mat{H}^\top\end{bmatrix} \preceq \begin{bmatrix}\mat{0} & \mat{G}^\top\\\mat{G} & \mat{0}\end{bmatrix}.
		\end{equation}
		Then, $\ZSR_q$ satisfies
		\begin{equation}
			r(Z(\mat{G}),E_q(\mat{H})) \leq \ZSR_q(\mat{G}, \mat{H}) \leq \frac{\gamma_{q^*}}{\gamma_{1}} r(Z(\mat{G}),E_q(\mat{H})),
		\end{equation}
		with $\gamma_s$ being the $s$-th root of the $s$-th Gaussian moment as in \eqref{eq:gaussian_moment}.
		Equivalently, we have that
		\begin{itemize}
			\item if $\ZSR_q(\mat{G}, \mat{H}) \leq 1$, then $Z(\mat{G}) \subseteq E_q(\mat{H})$;
			\item if $\ZSR_q(\mat{G}, \mat{H}) > \gamma_{q^*}/\gamma_{1}$, then $Z(\mat{G}) \not\subseteq E_q(\mat{H})$.
		\end{itemize}
	\end{theorem}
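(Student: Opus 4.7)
The plan is to express $r(Z(\mat{G}), E_q(\mat{H}))$ through the dual characterization \eqref{eq:ellipsotope_containment_equiv}, namely
$$r(Z(\mat{G}), E_q(\mat{H})) = \max_{\|\mat{H}^\top\v{x}\|_{q^*}\leq 1} \|\mat{G}^\top\v{x}\|_1,$$
and then to recognize this as an instance of the generalized operator-norm problem studied in \cite{kulmburg2023generalized}. The overall structure will mirror the proof of Theorem~\ref{thm:main_thm}, except that here we invoke a tighter semidefinite relaxation (a Nesterov-type SDP) which is available precisely when $q\in(1,2]$, in place of the linear relaxation that underlies $\ST_q$. This relaxation will have $\ZSR_q(\mat{G},\mat{H})$ as its optimal value and $\gamma_{q^*}/\gamma_1$ as its approximation factor.

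For the first inequality $r \leq \ZSR_q$, I would rewrite the objective bilinearly by means of the dual of the $\ell_1$-norm, $\|\mat{G}^\top\v{x}\|_1 = \max_{\|\v{s}\|_\infty \leq 1}\v{s}^\top\mat{G}^\top\v{x}$, lift the outer product $(\v{s}^\top, \v{x}^\top)^\top(\v{s}^\top, \v{x}^\top)$ to a positive semidefinite matrix $\mat{W}$, and relax the rank-one constraint. The bilinear objective becomes a linear functional of the off-diagonal block of $\mat{W}$, yielding exactly the block structure on the right-hand side of the matrix inequality in the definition of $K$; similarly, the constraints $\|\v{s}\|_\infty\leq 1$ and $\|\mat{H}^\top\v{x}\|_{q^*}\leq 1$ translate into bounds on the diagonal blocks whose Lagrangian dual produces the terms $\|\v{v}\|_1$ and $\|\v{w}\|_{q/(2-q)}$, respectively. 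Weak duality then yields $r \leq \ZSR_q(\mat{G},\mat{H})$.

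For the second inequality $\ZSR_q \leq (\gamma_{q^*}/\gamma_1) \, r$, I would use the standard Gaussian rounding argument of Nesterov. Given a primal optimizer $\mat{W}$ of the SDP, factor $\mat{W} = \mat{U}^\top\mat{U}$, sample $\v{g} \sim \mathcal{N}(\v{0}, \mat{I})$, and form $(\v{s}', \v{x}') = \mat{U}\v{g}$. The bilinear objective $\v{s}'^\top\mat{G}^\top\v{x}'$ is preserved in expectation up to a scaling independent of $q$. Using that the $k$-th absolute moment of a standard normal random variable equals $\gamma_k^k$, the expected norms $\mean\|\v{s}'\|_\infty$ and $\mean\|\mat{H}^\top\v{x}'\|_{q^*}$ are bounded above by $\gamma_1$ and $\gamma_{q^*}$ times the square roots of the SDP diagonal quantities, and rescaling produces a deterministic feasible $\v{x}$ for the problem \eqref{eq:ellipsotope_containment_equiv} whose objective value is at least $(\gamma_1/\gamma_{q^*})\ZSR_q(\mat{G},\mat{H})$.

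The main obstacle is the bookkeeping required to verify that the Lagrangian dual of the lifted SDP takes the precise form \eqref{eq:ZSR_def}, and in particular that the $\ell_{q/(2-q)}$-norm appears — this is the H\"older conjugate of $q/2$, which is well-defined only when $q \leq 2$, and this is what forces the restriction on $q$ stated in the theorem. Since an essentially identical derivation (for the abstract generalized operator $\infty \mapsto 1$-type norm) already appears in \cite{kulmburg2023generalized}, I expect the proof to conclude by a direct citation of the corresponding theorem there, exactly as was done for Theorem~\ref{thm:main_thm}. The equivalent statements about containment then follow from $\ZSR_q \leq 1 \Rightarrow r \leq 1$ and $\ZSR_q > \gamma_{q^*}/\gamma_1 \Rightarrow r > 1$, exactly as in Theorem~\ref{thm:main_thm}.
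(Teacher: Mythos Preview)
Your proposal is correct and matches the paper's approach: the paper's proof is literally a one-line citation, stating that the result follows from \cite[Theorem 5.6]{kulmburg2023generalized} in the same way Theorem~\ref{thm:main_thm} followed from \cite[Theorem 5.9]{kulmburg2023generalized}. Your sketch of the Nesterov-type SDP relaxation and Gaussian rounding is precisely the content of that cited theorem, and you correctly anticipated that the proof here amounts to invoking the dual formulation \eqref{eq:ellipsotope_containment_equiv} and then citing the external reference.
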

	\begin{proof}
		Similarly to Theorem \ref{thm:main_thm}, this follows from \cite[Theorem 5.6.]{kulmburg2023generalized}.
	\end{proof}
	One can compute $\ZSR_q$ with a solver for semidefinite problems, using $\mathcal{O}(n^{4.5}\ln(n))$ operations according to Remark \ref{rmk:optimization_complexity}.
	\begin{figure}[t!]
		\begin{flushleft}
			\definecolor{mycolor1}{rgb}{0.89020,0.10588,0.13725}%
			\definecolor{mycolor2}{rgb}{0.00000,0.36078,0.67059}%
			\definecolor{mycolor3}{rgb}{1.00000,0.76471,0.14510}%
,
		\end{equation*}
		and let $\widehat{r} = \ZSR_2(\mat{G}, \mat{I}_2)$. A numerical evaluation yields $\widehat{r} \approx 3.89$, and using the algorithm from \cite[Section III]{kulmburgSearchbasedStochasticSolutions2024} we get $r(\Zin, \Ecirc) \approx 3.67$, satisfying $\sqrt{2/\pi} \widehat{r} \leq r(\Zin, \Ecirc) \leq \widehat{r}$ as per Theorem \ref{thm:zono_in_ellipsoid}. By definition, $r(\Zin, \Ecirc) \Ecirc$ tightly contains $\Zin$. However, as we will see in Section \ref{sec:hardness}, $r(\Zin, \Ecirc)$ is $\NP$-hard to compute exactly, so $\widehat{r}$ can serve as an approximation. Specifically, Theorem \ref{thm:zono_in_ellipsoid} implies that any set contained in $\sqrt{2/\pi}\widehat{r} \Ecirc$ must also be contained in $\Zin$, whereas any set that has a point outside $\widehat{r} \Ecirc$ can not possibly be contained in $\Zin$. 
	\end{example}
	\subsection{The General Ellipsotope-in-Ellipsotope Containment Problem}
	\label{sec:general_ellipsotope-ellipsotope}
	For other values of $p, q$ that we have not treated above, if the inbody $\Ein$ and the circumbody $\Ecirc$ have the same center, there are two cases:
	\subsubsection{$1 < q \leq 2 \leq p < \infty$}
	\label{sec:1_q_2_p_infty}
	In this case, we can use \cite[Section 5.1.]{kulmburg2023generalized} to find a constant-ratio approximation to $r(\Ein, \Ecirc)$:
	\begin{theorem}
		\label{thm:1_q_2_p_infty}
		Let $\Ein = E_p(\mat{G}) \subseteq \reals^n$ and $\Ecirc = E_q(\mat{H})\subseteq \reals^{n}$ for $1 < q \leq 2 \leq p < \infty$, where $\mat{H}$ is surjective. We define
		\begin{equation}
			\label{eq:SR_def}
			\SR_{p,q}(\mat{G}, \mat{H}) := \frac{1}{2}\min_{(\v{v},\v{w})\in K} \norm{\v{v}}_{\frac{p}{p-2}}+\norm{\v{w}}_{\frac{q^*}{q^*-2}},
		\end{equation}
		where $K$ is the set of tuples $(\v{v}, \v{w}) \in \reals^{l}\times \reals^{m}$ satisfying
		\begin{equation}
			\begin{bmatrix}\Diag(\v{v}) & \mat{0}\\\mat{0} & \mat{H}\Diag(\v{w})\mat{H}^\top\end{bmatrix} \preceq \begin{bmatrix}\mat{0} & \mat{G}^\top\\\mat{G} & \mat{0}\end{bmatrix}.
		\end{equation}
		Then, $\SR_{p,q}$ satisfies
		\begin{equation}
			r(\Ein,\Ecirc) \leq \SR_{p,q}(\mat{G}, \mat{H}) \leq \gamma_{q^*}\gamma_{p} r(\Ein,\Ecirc),
		\end{equation}
		with $\gamma_k$ being the $k$-th root of the $k$-th Gaussian moment as in \eqref{eq:gaussian_moment}.
		Equivalently, we have that
		\begin{itemize}
			\item if $\SR_{p,q}(\mat{G}, \mat{H}) \leq 1$, then $\Ein \subseteq \Ecirc$;
			\item if $\SR_{p,q}(\mat{G}, \mat{H}) > \gamma_{q^*}\gamma_{p}$, then $\Ein \not\subseteq \Ecirc$.
		\end{itemize}
	\end{theorem}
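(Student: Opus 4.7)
The plan is to proceed in close analogy with the proof of Theorem \ref{thm:zono_in_ellipsoid}, which in turn followed \cite[Theorem 5.6.]{kulmburg2023generalized}. The first step is to recast $r(\Ein,\Ecirc)$ using the dual formulation \eqref{eq:ellipsotope_containment_equiv}, giving
\begin{equation*}
    r(\Ein,\Ecirc) = \max_{\norm{\mat{H}^\top\v{x}}_{q^*}\leq 1} \norm{\mat{G}^\top\v{x}}_{p^*},
\end{equation*}
since by assumption $\v{c} = \v{d}$. The crucial observation is that under the regime $1 < q \leq 2 \leq p < \infty$, we have $p^* \in (1,2]$ and $q^* \in [2,\infty)$, which places the problem exactly in the window where the semidefinite relaxation from \cite[Section 5.1.]{kulmburg2023generalized} applies (with the variable renaming noted in the footnote, i.e.\ $p\gets q^*$ and $q\gets p^*$).

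Once the reduction is in place, the two inequalities $r(\Ein,\Ecirc) \leq \SR_{p,q}(\mat{G},\mat{H})$ and $\SR_{p,q}(\mat{G},\mat{H}) \leq \gamma_{q^*}\gamma_{p} \cdot r(\Ein,\Ecirc)$ are obtained from the two sides of the semidefinite relaxation bound. For the first inequality, I would argue that the matrix inequality defining $K$ furnishes a Lagrangian certificate: for any feasible $(\v{v},\v{w})$ and any maximizer $\v{x}$ of the primal, left- and right-multiplying the block matrix constraint by $(\mat{G}^\top\v{x}, \v{x})$ and its transpose yields, after rearranging via weighted AM-inequalities (which convert $\norm{\v{v}}_{p/(2-p)}$ and $\norm{\v{w}}_{q^*/(2-q^*)}$ into the $p^*$- and $q^*$-norms appearing in the primal), the desired bound on $r(\Ein,\Ecirc)$. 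For the second inequality, I would invoke the Gaussian rounding argument from \cite[Section 5.1.]{kulmburg2023generalized}: given an optimal SDP solution, sampling $\v{g}$ from an appropriate Gaussian distribution and using the identity $\mean[|\langle \v{g},\v{u}\rangle|^k] = \gamma_k^k \norm{\v{u}}_2^k$ allows one to derandomize and extract a feasible primal $\v{x}$ whose objective is within a factor $\gamma_{q^*}\gamma_p$ of $\SR_{p,q}(\mat{G},\mat{H})$.

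The equivalent reformulation in terms of containment of $\Ein$ in $\Ecirc$ and in $\gamma_{q^*}\gamma_p \cdot \Ecirc$ is then immediate from the defining property of $r$ from Theorem \ref{definition:ellipsotope_containment}: $\SR_{p,q}(\mat{G},\mat{H}) \leq 1$ forces $r(\Ein,\Ecirc) \leq 1$, hence $\Ein \subseteq \Ecirc$, while $\SR_{p,q}(\mat{G},\mat{H}) > \gamma_{q^*}\gamma_p$ forces $r(\Ein,\Ecirc) > 1$, hence $\Ein \not\subseteq \Ecirc$.

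The main obstacle I anticipate is the careful bookkeeping of the variable swap $(p,q)\leftrightarrow (q^*,p^*)$ between this paper and \cite{kulmburg2023generalized}, together with verifying that the parameter ranges $\frac{p}{2-p}$ and $\frac{q^*}{2-q^*}$ in \eqref{eq:SR_def} correspond precisely to the Hölder conjugates required by the external result (in particular, they are well-defined and nonnegative because $p\geq 2$ and $q^*\geq 2$). No other genuinely new work is needed beyond invoking \cite[Section 5.1.]{kulmburg2023generalized}, so in the final write-up the proof will likely condense to a one-sentence citation, as was done for Theorem \ref{thm:zono_in_ellipsoid}.
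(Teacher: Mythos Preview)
Your proposal is correct and matches the paper's approach essentially verbatim: the paper's proof is the single sentence ``Once again, this can be proven in the same way as Theorem~\ref{thm:main_thm} and Theorem~\ref{thm:zono_in_ellipsoid}, this time using \cite[Theorem 5.4.]{kulmburg2023generalized},'' which is exactly the one-sentence citation you anticipated. The only minor refinement is that the paper points to Theorem~5.4 of \cite{kulmburg2023generalized} specifically rather than Section~5.1, but your reasoning about the dual formulation, the parameter regime, and the analogy with Theorem~\ref{thm:zono_in_ellipsoid} is all on target.
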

	\begin{proof}
		Once again, this can be proven in the same way as Theorem \ref{thm:main_thm} and Theorem \ref{thm:zono_in_ellipsoid}, this time using \cite[Theorem 5.4.]{kulmburg2023generalized}.
	\end{proof}
	One can compute $\SR_{p,q}$ with a solver for semidefinite problems, using $\mathcal{O}(n^{4.5}\ln(n))$ operations according to Remark \ref{rmk:optimization_complexity}.
	
	\medskip
	
	\subsubsection{Remaining Cases}
	As we will discuss in the next section, there are remaining cases because, to the best of our knowledge, the apparently simpler problem of computing the matrix norm
	\begin{equation*}
		\max_{\norm{\v{x}}_{q*} \leq 1} \norm{\mat{A}\v{x}}_{p^*}
	\end{equation*}
	of a matrix $\mat{A}$ has not yet been solved in the literature, which corresponds to the special case $\Ein = E_p(\mat{I})$, $\Ecirc = E_q(\mat{A}^\top)$. Nevertheless, it should be mentioned that if $\Ein$ and $\Ecirc$ have the same center, one can construct rough approximations using the equivalence of norms: by \cite[Proposition 37.6]{tomczak-jaegermann_banach-mazur_1989}, for any $s,t \in [1,\infty]$ such that $s < t$, and for any $\v{x}\in\reals^n$ we have that
	\begin{equation}
		\norm{\v{x}}_t \leq \norm{\v{x}}_s \leq n^{1/s - 1/t} \norm{\v{x}}_t,
	\end{equation}
	where we use the convention $1/\infty = 0$.
	Consequently, we can use the equivalence of norms to reduce the containment problem to one of the cases we treated above.
	
	\section{Hardness of the Ellipsotope Containment Problem}
	\label{sec:hardness}
	\begin{figure}[t!]
		\centering
		\begin{tikzpicture}[scale=0.5]
			
			\draw[-] (0,0) -- (10,0) -- (10,10) -- (0,10) -- (0,0);
			
			\draw[-] (0,10) -- (-0.1-\padding,10);
			\draw[-] (0,5) -- (-0.1-\padding,5);
			\draw[-] (0,0) -- (-0.1-\padding,0);
			
			\draw[-] (0,0) -- (0,-0.1-\padding);
			\draw[-] (5,0) -- (5,-0.1-\padding);
			\draw[-] (10,0) -- (10,-0.1-\padding);
			
			\draw[->] (0,10) -- (0,11);
			\draw[->] (10,0) -- (11,0);
			
			\fill[pattern=my large dots, pattern color=Red] (0,0) rectangle (5,5);
			\fill[color=Red, opacity=0.3] (0,0) rectangle (5,5);
			\fill[color=Red] (-\padding/2,-\padding/2) rectangle (5-\padding/2,\padding/2);
			
			\fill[pattern=my large dots, pattern color=Red] (5,5) rectangle (10,10);
			\fill[color=Red, opacity=0.3] (5,5) rectangle (10,10);
			\fill[color=Red] (10-\padding/2,5+\padding/2) rectangle (10+\padding/2,10+\padding/2);
			
			\fill[pattern=crosshatch, pattern color=Blue] (5,0) rectangle (10,5);
			\fill[color=Blue, opacity=0.3] (5,0) rectangle (10,5);
			\fill[color=Blue] (5-\padding/2,-\padding/2) rectangle (5+\padding/2,5+\padding/2);
			\fill[color=Blue] (5-\padding/2,5-\padding/2) rectangle (10+\padding/2,5+\padding/2);
			\fill[color=Blue] (5-\padding/2,-\padding/2) rectangle (10+\padding/2,\padding/2);
			\fill[color=Blue] (10-\padding/2,-\padding/2) rectangle (10+\padding/2,5+\padding/2);
			
			\fill[color=Yellow] (-\padding/2,10+\padding/2) rectangle (\padding/2,-\padding/2);
			\fill[color=Yellow] (5,5) circle (\padding);
			
			\fill[color=black] (\padding/2,10-\padding/2) rectangle (5+\padding/2,10+\padding/2);
			\fill[color=Red] (5+\padding/2,10-\padding/2) rectangle (10+\padding/2,10+\padding/2);

			\draw (0,11.5) node {$\bm{q}$};
			\draw (11.5,0) node {$\bm{p}$};
			
			\draw (-0.5-\padding,10) node {$\infty$};
			\draw (-0.5-\padding,5) node {$2$};
			\draw (-0.5-\padding,0) node {$1$};
			
			\draw (10,-0.5-\padding) node {$\infty$};
			\draw (5,-0.5-\padding) node {$2$};
			\draw (0,-0.5-\padding) node {$1$};
			
			\draw (-1.5-\padding,10) node[rotate=90] {Zonotope};
			\draw (-1.5-\padding,5) node[rotate=90] {Ellipsoid};
			\draw (-1.5-\padding,0) node[rotate=90] {V-Polytope};
			
			\draw (10,-1.5-\padding) node {Zonotope};
			\draw (5,-1.5-\padding) node {Ellipsoid};
			\draw (0,-1.5-\padding) node {V-Polytope};

			\draw (5,-2.5-\padding) node {\textbf{Inbody $\Ein$}};
			
			\draw (-2.5-\padding,5) node[rotate=90] {\textbf{Circumbody $\Ecirc$}};

			\fill[color=Yellow] (-3,-3.5) rectangle (-2,-4);
			\fill[color=black] (-3,-4.5) rectangle (-2,-5);

			\draw (-2,-3.7) node[anchor=west] { : In $\FPTAS$};
			\draw (-2,-4.7) node[anchor=west] { : Not in $\FPTAS$};

			\fill[pattern=crosshatch, pattern color=Blue] (5,-3.5) rectangle (6,-4);
			\fill[color=Blue, opacity=0.3] (5,-3.5) rectangle (6,-4);
			\fill[pattern=my large dots, pattern color=Red] (5,-4.5) rectangle (6,-5);
			\fill[color=Red, opacity=0.3] (5,-4.5) rectangle (6,-5);
			\draw (6,-3.7) node[anchor=west] { : $\tau$-inapproximable};
			\draw (6,-4.7) node[anchor=west] { : Inapproximable};

		\end{tikzpicture}
		\caption{An overview of the complexity of different containment problems. We always assume the most pessimistic case, e.g., for $p \geq q$ and $1 < q < 2$, we displayed the problem is inapproximable, though this would require $\NP \not\subseteq \problem{BPP}$. A solid boundary line means that the boundary is included in the region. For the V-polytope case, we only consider the class of centrally symmetric V-polytopes.}
		\label{fig:containment_square}
	\end{figure}
	We now address the computational complexity of the ellipsotope containment problem for different values of $p$ and $q$. Formally speaking, there are two distinct ways to define the containment problem for ellipsotopes $\Ein$ and $\Ecirc$: either the aim is to verify $\Ein \subseteq \Ecirc$, which is a decision problem that should be answered by 'Yes' or 'No,' or the goal is to compute $r(\Ein, \Ecirc)$, which is an optimization problem that should return a real number.
	
	The latter one makes a more detailed complexity analysis possible, so we will focus on the optimization formulation.
	The following list summarizes results that can be deduced directly from the available literature about matrix norms; a graphical overview is presented in Figure \ref{fig:containment_square}. In some cases, we implicitly use the argument that $\norm{\mat{A}}_{s\mapsto t} = \norm{\mat{A}^\top}_{t^*\mapsto s^*}$ (which can be shown using duality), so that computing the $s\mapsto t$-norm is as hard as computing the $t^*\mapsto s^*$-norm, for $s,t\in[1,\infty]$. In what follows, $\tau$ will denote an appropriate constant that depends solely on $p$ and $q$.
	\begin{enumerate}
		\item If $p = 1$ or $p=q=2$, the problem is in $\FPTAS$ as discussed in Sections \ref{sec:polytope-ellipsotope} and \ref{sec:ellipsoid-ellipsoid}.
		\item If $1\leq q\leq2\leq p \leq \infty$ and $q\neq p$, the problem is $\tau$-inapproximable unless $\Poly = \NP$ by \cite[Theorem 1.4.]{bhattiprolu_2023}.
		\item If $p = \infty$ and $2 < q < \infty$, the problem is inapproximable unless $\Poly = \NP$ by \cite[Theorem 6.4.]{bhattiprolu_2023}.
		\item If $1 < p < 2$ and $q = 1$, the problem is inapproximable unless $\Poly = \NP$ by \cite[Theorem 6.4.]{bhaskara}.
		\item If $p = q$ and $p\in(1,2)$ or $p \in (2,\infty)$, the problem is inapproximable unless $\Poly = \NP$ by \cite[Theorem 6.2.]{bhaskara}\footnote{In fact, \cite[Theorem 6.2.]{bhaskara} claims this also holds for $p=2$, but this is a mistake, as confirmed in the sentence before \cite[Lemma B.1.]{bhaskara}.}.
		\item If $1 < p < 2$ and $q \leq p$, or if $q \leq p$ and $2 < q < \infty$, the problem is inapproximable unless $\Poly=\NP$ by \cite[Theorem 6.3.]{bhaskara}.
		\item If $q \geq p$ and $1 < q < 2$, or if $2 < p < \infty$ and $q \geq p$, the problem is inapproximable unless $\NP \subseteq \problem{BPP}$ by \cite[Theorem 1.1.]{bhattiprolu_2023}.
		\item For other cases with $q < \infty$, only a few specific results are known (for example, \cite[Theorem 2.5]{barak_brandao_et_al} proved the $2\mapsto 4$-norm is inapproximable, assuming that the Exponential Time Hypothesis is true).
	\end{enumerate}
	\begin{remark}
		For certain cases in the list above, more stringent statements may hold. For example, \cite[Theorem 6.2.]{bhaskara} showed the case $p=q \in (1,2)$ is inapproximalbe within a factor of $\Omega(2^{(\log(N))^{1-\delta}})$ for any $\delta>0$ (see Remark \ref{rmk:higher_inapproximability}), under the stronger assumption that $\NP\not\subseteq\problem{DTIME}(2^{\text{polylog}(N)})$, where $N$ is the maximum of the number of generators of $\Ein$ and $\Ecirc$. Since our focus is on approximability, we encourage readers seeking further insights into these stronger claims to consult the referenced literature.
	\end{remark}
	The cases we did not cover are $q=\infty$ and $p\in(1,\infty]$, corresponding to the ellipsotope-in-zonotope case. We showed in \cite{Kulmburg2021} that for $p=q=\infty$ the problem is hard to compute, but we now go one step further and generalize this statement to other values $p \in (1,\infty)$ by using a reduction from the $p\mapsto 1$-norm, which is $\NP$-hard to compute and even $\tau$-inapproximable unless $\Poly = \NP$ as shown in \cite[Theorem 6.4.]{bhaskara} for $1<p<2$, and \cite[Theorem 1.4.]{bhattiprolu_2023} for $2\leq p \leq \infty$.
	Our reduction is loosely inspired by \cite{Kulmburg2021, burger_finding_2000}.
	\begin{theorem}
		\label{thm:not_FPTAS}
		Unless $\Poly = \NP$, the ellipsotope-in-zonotope containment problem for $p$-ellipsotopes with $p\in(1,\infty]$ is not in $\FPTAS$. Furthermore, the corresponding decision formulation is $\coNP$-hard. This remains true, even if the ellipsotope and zonotope are required to have the same center.
	\end{theorem}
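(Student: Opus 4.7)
The plan is to construct a polynomial-time reduction from the $p\mapsto 1$-norm computation, which is $\NP$-hard and even $\tau$-inapproximable for some $\tau>1$ by \cite[Theorem~6.4]{bhaskara} and \cite[Theorem~1.4]{bhattiprolu_2023}, to the ellipsotope-in-zonotope containment problem with both sets centered at the origin. Given $\mat{A}\in\reals^{m\times n}$, the goal is to build a $p$-ellipsotope $\Ein = E_p(\mat{G})$ and a zonotope $\Ecirc = Z(\mat{H})$ of polynomial size in $m,n$ such that $r(\Ein,\Ecirc)$ equals, up to a polynomial-time-computable normalization, $\|\mat{A}\|_{p\mapsto 1}$. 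Combining the dual formulation \eqref{eq:ellipsotope_containment_equiv} with Bauer's maximum principle yields
\[
r(E_p(\mat{G}),Z(\mat{H})) = \max_{\|\mat{H}^\top\v{x}\|_1\leq 1}\|\mat{G}^\top\v{x}\|_{p^*}, \qquad \|\mat{A}\|_{p\mapsto 1} = \max_{\|\v{\sigma}\|_\infty\leq 1}\|\mat{A}^\top\v{\sigma}\|_{p^*},
\]
so that setting $\mat{G} = \begin{pmatrix}\mat{A}\\\mat{0}\end{pmatrix}$ reduces the task to choosing $\mat{H}$ such that the projection of the polytope $\{\v{x} : \|\mat{H}^\top\v{x}\|_1\leq 1\}$ onto the first $m$ coordinates equals (up to a known scaling) the cube $[-1,1]^m$.

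The principal obstacle is the construction of this $\mat{H}$. By the polarity identity $P_V(K^\circ)=(K\cap V)^\circ_V$ applied with $K=Z(\mat{H})$ and $V$ the subspace of the first $m$ coordinates, the projection condition above is equivalent to requiring that the central slice $Z(\mat{H})\cap V$ equal the cross-polytope $\{\v{y} : \|\v{y}\|_1\leq 1\}$. The difficulty lies in the fact that this cross-polytope has $2^m$ facets and is not itself a zonotope for $m\geq 3$, so one must lift to $\reals^{m+k}$ for an auxiliary dimension $k$. A zonotope with $l$ generators in $\reals^{m+k}$ admits up to $2\binom{l}{m+k-1}$ facets, which by Stirling can reach $2^m$ with $l,k=O(m)$, making a polynomial-size realization feasible in principle. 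Following the strategy of \cite{Kulmburg2021, burger_finding_2000}, I would build $\mat{H}$ explicitly using auxiliary generators that encode the LP description $\|\v{y}\|_1\leq 1\iff\exists\v{u}: |\v{y}|\leq\v{u},\v{1}^\top\v{u}\leq 1$, and verify through Fourier-Motzkin elimination that the resulting slice is indeed the cross-polytope. With this in hand, any $\FPTAS$ for $r(\Ein,\Ecirc)$ would yield an $\FPTAS$ for $\|\mat{A}\|_{p\mapsto 1}$, contradicting its $\tau$-inapproximability, and the decision version $\Ein\subseteq\Ecirc$ becomes equivalent to $\|\mat{A}\|_{p\mapsto 1}\leq T$ for an appropriate threshold $T$, which is $\coNP$-hard as the complement of an $\NP$-hard problem; the alignment of centers is trivially satisfied since all sets in the construction are centered at the origin.
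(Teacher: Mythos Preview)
Your high-level strategy---reducing from the $p\mapsto 1$-norm via the dual formulation~\eqref{eq:ellipsotope_containment_equiv}---matches the paper's, and your polarity identity correctly reduces the task to finding a zonotope $Z(\mat{H})\subset\reals^{m+k}$ whose \emph{central} slice $Z(\mat{H})\cap(\reals^m\times\{0\})$ is the cross-polytope $\ball_1$. The gap is precisely here: you never construct such an $\mat{H}$, and the sketch you offer does not work. The LP lifting $\|\v{y}\|_1\le 1\iff\exists\v{u}:|\v{y}|\le\v{u},\ \v{1}^\top\v{u}\le 1$ is intrinsically asymmetric (it forces $\v{u}\ge 0$ and imposes a one-sided bound on $\v{1}^\top\v{u}$), whereas a central slice of a zonotope is described by $\v{\beta}\in[-1,1]^l$ together with \emph{homogeneous} equalities $\mat{H}_2\v{\beta}=0$; there is no room for one-sided constraints. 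Your facet-counting remark (that $2\binom{l}{m+k-1}$ can reach $2^m$ for $l,k=O(m)$) shows only that the facet count is not an obstruction; it is not an existence proof. In fact the natural candidate $\mat{H}=\bigl(\begin{smallmatrix}\mat{I}&-\mat{I}\\\v{1}^\top&\v{1}^\top\end{smallmatrix}\bigr)$---which is exactly the matrix the paper and \cite{Kulmburg2021} use---has central slice equal to the \emph{cube} $[-1,1]^m$, not $\ball_1$.

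The paper sidesteps this obstruction. It keeps the same $\mat{H}$, but exploits the fact that the \emph{off-center} slices of $Z(\tfrac12\mat{H},n\v{e}_{n+1})$ at heights $\rho\in[0,1]$ are the scaled cross-polytopes $\rho\,\ball_1$ (Lemma~\ref{lmm:classical_zono_cut}). To reach those slices while keeping both sets concentric, the ellipsotope is made \emph{non-degenerate} via an extra generator in the auxiliary direction, $\mat{G}_{\mat{A}}=\bigl(\begin{smallmatrix}\mat{A}&\v{0}\\\v{0}^\top&-L\end{smallmatrix}\bigr)$, with $L$ tuned so that the ellipsotope grazes the zonotope's cone region precisely when $\|\mat{A}\|_{p\mapsto1}=1$ (Lemmas~\ref{lmm:L_properties} and~\ref{lmm:main_technical_lemma_apx}). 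This yields only the qualitative equivalence $r(\Ein,\Ecirc)=1\Leftrightarrow\|\mat{A}\|_{p\mapsto1}=1$, not your hoped-for identity $r=\|\mat{A}\|_{p\mapsto1}$; an outer bisection over a scale parameter $\xi$ (Proposition~\ref{prop:apx_main_prop}) is then required to convert a putative $\FPTAS$ for containment into one for the norm, with a careful quantitative analysis relating the accuracies $\varepsilon$ and $\delta$.
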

	\begin{proof}
		Since the proof is very technical, we present it in Appendix \ref{sec:hardness_ell_in_zono}.
	\end{proof}
	\begin{remark}
		For certain applications (e.g., reachability analysis, as discussed in \cite[p.~12]{althoff_dissertation}), it is useful to consider the complexity of certain operations with respect to the \emph{order} of zonotopes, which is defined as $\varrho = l/n$, for a circumbody zonotope in $\reals^n$ with $l$ generators. The proof of Theorem \ref{thm:not_FPTAS} in the Appendix uses a zonotope of order less than $2$, which means the ellipsotope-in-zonotope containment problem (for $p\in(1,\infty]$) is not in $\FPTAS$ even if we restrict our attention to zonotopes with order $2$.
	\end{remark}
	For the ellipsotope-in-zonotope containment problem when $p\in (2,\infty]$ and $q=\infty$, we can deduce a stronger result under additional assumptions, which require the following complexity classes:
	\begin{definition}[$\RP$ and $\RQP$ {\cite[Chapter 7.3]{Arora_Barak_2009}}]
		The complexity class $\RQP$ (\textbf{R}andomized \textbf{Q}uasi-\textbf{P}olynomial) is the class of problems for which a probabilistic Turing machine exists with the following properties:
		\begin{itemize}
			\item It always runs in quasi-polynomial time (i.e., in time $2^{\mathcal{O}\left(\log(N)^k\right)}$ for some $k\in\naturals$, with respect to the input size $N$).
			\item If the correct answer is 'No,' it always returns 'No.'
			\item If the correct answer is 'Yes,' it returns 'Yes' with a probability of at least $1/2$; otherwise, it returns 'No.'	
		\end{itemize}
		We also define $\RP$ (\textbf{R}andomized \textbf{P}olynomial) similarly by requiring the runtime to be polynomial instead.
	\end{definition}
	The class $\RQP$ is not well-known in the literature, so its relation to $\NP$ is currently unknown. On the other hand, it is known that $\Poly \subseteq \RP \subseteq \NP$, though it is not known whether either of these containments is strict (see also \cite[Chapter 7.3]{Arora_Barak_2009}).
	\begin{theorem}
		\label{thm:RP_RQP_zono_containment}
		Unless $\NP = \RP$, the ellipsotope-in-zonotope containment problem for $p$-ellipsotopes with $p\in(2,\infty]$ is inapproximable. Additionally, assuming the generator matrix of the ellipsotope is an $n\times m$-matrix, in the sense of Remark \ref{rmk:higher_inapproximability}, this problem is inapproximable within a factor $\Omega(2^{(\log \widetilde{N})^{1-\delta}})$ for any $\delta > 0$ unless $\NP \subseteq \RQP$, where $\widetilde{N} = \max\{n,m\}$. These statements remain true, even if the ellipsotope and zonotope are required to have the same center.
	\end{theorem}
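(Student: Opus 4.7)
The strategy is to adapt the reduction from Theorem \ref{thm:not_FPTAS} by pairing it with sharper inapproximability results for the $p \mapsto 1$-norm when $p \in (2, \infty]$. Recall that the proof of Theorem \ref{thm:not_FPTAS} constructs, in polynomial time from an $n \times m$ matrix $\mat{A}$, an ellipsotope $\Ein$ and a zonotope $\Zcirc$ whose dimensions and number of generators are polynomially bounded in $\max\{n,m\}$, such that $r(\Ein, \Zcirc)$ is proportional to $\|\mat{A}\|_{p\mapsto 1}$. The first step is therefore to re-read that reduction in a quantitative form and verify it is approximation-preserving up to a constant factor: any $\tau$-approximation for the containment problem yields a $c\tau$-approximation for $\|\mat{A}\|_{p \mapsto 1}$, with $c$ independent of $\mat{A}$.

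For the first inapproximability claim, I would then invoke the fact that the $p \mapsto 1$-norm admits no constant-factor polynomial-time approximation for $p \in (2, \infty]$ unless $\NP = \RP$, which follows from the randomized-reduction variant of the hardness theorems in \cite[Theorem 1.4]{bhattiprolu_2023}. Propagating this through the approximation-preserving reduction above immediately yields the claimed inapproximability of the ellipsotope-in-zonotope containment problem under the hypothesis $\NP \ne \RP$.

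For the quantitative statement, I would apply the stronger quasi-polynomial-time inapproximability bound $\Omega(2^{(\log N)^{1-\delta}})$ for the $p \mapsto 1$-norm from \cite{bhaskara, bhattiprolu_2023}, which holds under the hypothesis $\NP \not\subseteq \RQP$, with $N$ denoting the size of the matrix input. Since the reduction keeps $N$ polynomial in $\widetilde{N} = \max\{n,m\}$, one has $\log N = \Theta(\log \widetilde{N})$, and the quasi-polynomial factor transfers to $\Omega(2^{(\log \widetilde{N})^{1-\delta'}})$ for any $\delta' > 0$ by absorbing the polynomial blow-up into a renaming of $\delta$. The main obstacle will be to verify that the multiplicative error introduced by constructing $\Ein$ and $\Zcirc$ does not degrade this scaling, which requires a sharper reading of the reduction than what suffices for the $\FPTAS$ statement. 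A secondary difficulty is the endpoint $p = \infty$, where $\|\mat{A}\|_{\infty \mapsto 1}$ coincides with the Grothendieck problem and admits a constant-factor approximation; here the required hardness must be lifted from the structure imposed by $\Zcirc$ (for instance by slightly perturbing to $p < \infty$ and passing to a limit) rather than drawn directly from the matrix-norm literature.
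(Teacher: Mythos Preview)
Your proposal has a genuine gap in both the reduction and the target hardness problem, and the paper's proof proceeds along entirely different lines.

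First, the reduction from Theorem~\ref{thm:not_FPTAS} is \emph{not} approximation-preserving in the sense you need. That construction shows that an $\FPTAS$ for containment yields an $\FPTAS$ for $\|\mat{A}\|_{p\mapsto 1}$ via bisection, but it does not show that a $\tau$-approximation for $r(\Ein,\Zcirc)$ yields a $c\tau$-approximation for $\|\mat{A}\|_{p\mapsto 1}$. The function $\sigma(\xi)$ in Proposition~\ref{prop:apx_main_prop} satisfies $\sigma(\xi)=1 \Leftrightarrow \xi=\|\mat{A}\|_{p\mapsto 1}$, but $\sigma$ is not proportional to $\|\mat{A}\|_{p\mapsto 1}/\xi$; the bound \eqref{eq:bound_sigma} only controls \emph{additive} deviations, with a constant depending on $\mat{A}$. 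Second, even granting such a transfer, the $p\mapsto 1$-norm for $p\in(2,\infty]$ is only known to be $\tau$-inapproximable for \emph{some} constant $\tau$ (item~2 in Section~\ref{sec:hardness}), not for all constants; at $p=\infty$ it is the Grothendieck problem, which \emph{does} admit a constant-factor approximation. So reducing from $\|\cdot\|_{p\mapsto 1}$ cannot possibly yield full inapproximability. Finally, you never identify where the randomness enters; the assumptions $\NP\ne\RP$ and $\NP\not\subseteq\RQP$ in the statement must come from a randomized step in the reduction, not from a ``randomized-reduction variant'' of a deterministic hardness theorem.

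The paper's proof abandons the construction of Theorem~\ref{thm:not_FPTAS} entirely. It starts instead from the dual formulation \eqref{eq:ellipsotope_containment_equiv}, which gives $r(E_p(\mat{A}^\top),Z(\mat{H}))=\max_{\|\mat{H}^\top\v{y}\|_1\le 1}\|\mat{A}\v{y}\|_{p^*}$. The key idea is to choose $\mat{H}=\mat{B}^\top$ where $\mat{B}$ is a \emph{random} matrix realizing an $\ell_{p^*}\hookrightarrow\ell_1$ embedding: with constant probability, $(1-\varepsilon)\|\v{y}\|_{p^*}\le\|\mat{B}\v{y}\|_1\le(1+\varepsilon)\|\v{y}\|_{p^*}$ (this is \cite[Theorem~4.12]{bhattiprolu_2023}). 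Then $r(E_p(\mat{A}^\top),Z(\mat{B}^\top))$ approximates $\|\mat{A}\|_{p^*\mapsto p^*}$ up to a factor $(1+\varepsilon)/(1-\varepsilon)$, and the target hardness is the $s\mapsto s$-norm for $s=p^*\in(1,2)$, which \emph{is} inapproximable to any constant factor by \cite[Theorem~6.2]{bhaskara} and carries the quasi-polynomial lower bound. The randomness in $\mat{B}$ is precisely what forces the conclusion to be stated under $\NP\ne\RP$ rather than $\Poly\ne\NP$. For $p=\infty$ the paper uses a variant with both inbody and circumbody built from $\mat{B}$, reducing from $\|\mat{A}\|_{q\mapsto q}$ for some $q\in(1,2)$.
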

	\begin{proof}
		A proof can be found in Appendix \ref{sec:proof_thm_RP_RQP}.
	\end{proof}

	\section{Application to the Computation of Safe Sets}
	\label{sec:applications}
	To demonstrate the applicability of our results, we consider the problem of computing a safe set \cite{gruber} for a controllable linear time-invariant system:
	\begin{equation}
		\label{eq:ode}
		\begin{split}
			&\dot{\v{x}} = \mat{A}\v{x} + \mat{B}\v{u} + \mat{E}\v{w} + \v{\chi},
		\end{split}
	\end{equation}
	where $\mat{A} \in \reals^{n_x \times n_x}$, $\mat{B} \in \reals^{n_x \times n_u}$, $\mat{E} \in \reals^{n_x \times n_w}$, and $\v{\chi} \in \reals^{n_x}$ for $n_x, n_u, n_w\in\naturals$. We call $\v{u}$ the \emph{control input} and $\v{w}$ the \emph{disturbance} over some time interval $t \in [0,\tend]$ with $\tend\geq 0$, and $\v{x}$ is the \emph{state} of the system.
	We impose that $\v{x}$, $\v{u}$, and $\v{w}$ are constrained to the sets $\X \subseteq \reals^{n_x}$, $\U\subseteq \reals^{n_u}$, and $\W\subseteq \reals^{n_w}$, respectively, for all $t\in[0,\tend]$, where $\X$ and $\U$ are polyhedra $P(\mat{\Lambda}_{\X}, \v{\lambda}_{\X})$ and $P(\mat{\Lambda}_{\U}, \v{\lambda}_{\U})$, respectively (though $\U$ is generally assumed to be compact), and $\W$ is a zonotope $Z(\mat{G}_{\W})$ centered at the origin (if it is not centered at the origin, without loss of generality, we can extract the center and add it to $\v{\chi}$).
	
	We consider the setting of sampled-data systems, i.e., we control the continuous-time system by a digital controller \cite{Rakovic2017_sampledData}.
	The state of the system is measured at discrete points in time $t_i = i\tend/\Nts$ for $i=0,...,\Nts$, with $\Nts\in\naturals$ being the number of time steps.
	Based on the initial state $\v{x}_0 \in \reals^{n_x}$ and the state at the current time step, similarly to \cite{gruber}, we choose the state feedback controller
	\begin{equation}
		\label{eq:piecewise_constant_u}
		\begin{split}
			&\v{u}(t,\v{x},\v{x}_0) = \mat{K}\v{x} + \v{c}_{u,i} + \mat{U}_{i}\v{\beta}(\v{x}_0), \text{ for } t \in [t_i, t_{i+1}),
		\end{split}
	\end{equation}
	where $\mat{K}$ is a stabilizing feedback gain, $\v{c}_{u,i} \in \reals^{n_u}$ and $\mat{U}_i\in \reals^{n_u\times m_{u}}$ are optimization variables, and $\v{\beta}(\v{x}_0)$ is a parametrization of $\v{x}_0$ (details can be found in Section \ref{sec:modeling_terminal_region}).
	For our analysis, we compute $\mat{K}$ from a linear quadratic regulator \cite{Lewis2012}.
	
	Our goal is to find a safe set $\T\subseteq \reals^{n_x}$ (see \cite[Section IV.B.]{gruber} and \cite[Theorem 6.]{raghuraman_set_2022}) which is as large as possible, yet satisfies the condition
	\begin{equation}
		\label{eq:terminalRegion}
		\begin{split}
			&\exists \v{c}_{u,i}\in\reals^{n_u}, \mat{U}_{i}\in\reals^{n_u\times m_{u}},\\
			&\;\R_{\v{x}}(\tend; \T) \subseteq \T\\
			&\; \wedge \; \forall t\in[0,\tend], \R_{\v{x}}(t; \T) \subseteq \X\\
			&\; \wedge \; \forall t\in[0,\tend],  \R_{\v{u}}(t; \T) \subseteq \U,\\
		\end{split}
	\end{equation}
	where the sets $\R_{\v{x}}(t; \X_0)$ and $\R_{\v{u}}(t;\X_0)$ are computed as described in \cite[Section III.B.]{gruber} and the containments $\R_{\v{x}}(t; \widehat{\T}) \subseteq \X$ and $\R_{\v{u}}(t; \widehat{\T}) \subseteq \U$ can be solved as in \cite{gruber}.
	In contrast to \cite{gruber}, we will not solve \eqref{eq:terminalRegion} by first computing an invariant target set serving as the circumbody (instead of $\T$) in the second line of \eqref{eq:terminalRegion}. 
	Instead, we will directly solve the containment $\R_{\v{x}}(\tend; \T) \subseteq \T$ using the methods from Section \ref{sec:containment_problem}.
	
	\subsection{Modeling the Terminal Region}
	\label{sec:modeling_terminal_region}
	Similarly to \cite{gruber,schaefer}, $\T$ is modeled either as a zonotope or an ellipsoid.
	If $\T$ is modeled as a zonotope, we leave its center $\v{c}_{\T}$ as an optimization variable and set its generator matrix $\mat{G}_{\T}$ to
	\begin{equation}
		\mat{G}_{\T} = \mat{R}\,\mat{G}_{\mathrm{fixed}}\Diag(\v{s}),
	\end{equation}
	where $s_i \in [0,\infty)$ for $i=1,...,m$ are scaling factors of our optimization variables, $\mat{G}_{\mathrm{fixed}} \in \reals^{n_x\times m}$ is a fixed matrix, and $\mat{R} \in \reals^{n_x \times n_x}$ is a fixed, invertible matrix that can help orient the set. In our case, we compute $\mat{G}_{\mathrm{fixed}}$ by under-approximating the unit hypersphere by a zonotope, using the procedure described in \cite[Lemma 4]{victor_ellipsoid}. As for $\mat{R}$, we discovered empirically that a good choice is
	\begin{equation}
		\label{eq:R_choice}
		\mat{R} = \mat{P}^{-1/2},
	\end{equation}
	where $\mat{P}$ is the unique positive definite solution to the discrete-time algebraic Riccati equation \cite[Sec. 2.4]{Lewis2012}. 
	Intuitively, this makes sense because any sub-level set of the ellipsoid $E(\mat{R})$ is a control invariant set under the linear feedback controller associated with $\mat{P}$.
	
	If $\T$ is modeled as an ellipsoid, we leave its center $\v{c}_{\T}$ as an optimization variable once again and choose the generator matrix
	\begin{equation}
		\mat{G}_{\T} = \mat{R} \Diag(\v{s}),
	\end{equation}
	where $s_i \in [0,\infty)$ for $i=1,...,n_x$ and $\mat{R}$ is the same as in \eqref{eq:R_choice}. To utilize the results from \cite{gruber} we over-approximate the ellipsoid $\T$ by a zonotope $\widehat{\T}$, keeping the same center $\v{c}_{\widehat{\T}} = \v{c}_{\T}$, but constructing a new generator matrix
	\begin{equation}
		\mat{G}_{\widehat{\T}} = \mat{G}_{\T} \mat{G}_{\mathrm{fixed}},
	\end{equation}
	where $\mat{G}_{\mathrm{fixed}} \in \reals^{n_x\times m}$ has to satisfy $\ball_2 \subseteq Z(\mat{G}_{\mathrm{fixed}})$, which can by achieved by means of \cite[Theorem 4]{victor_ellipsoid} for $m>n_x$, and by taking $\mat{G}_{\mathrm{fixed}} = \mat{I}$ when $m = n_x$.
	We denote by $\mat{G}_{\widehat{\T}}$ the generator matrix of $\widehat{\T}$, and by $\v{c}_{\widehat{\T}}$ its center. If $\T$ is modeled as a zonotope, we set $\widehat{\T} = \T$.
	
	Whether $\T$ is a zonotope or an ellipsoid, for any $\v{x}_0 \in \T$, there exists a parameter $\v{\beta}(\v{x}_0) \in \ball_q$ (where $q = \infty$ if $\T$ is a zonotope, $q=2$ if $\T$ is an ellipsoid) such that
	\begin{equation}
		\label{eq:x_0_param}
		\v{x}_0 = \mat{G}_{\T}\v{\beta}(\v{x}_0) + \v{c}_{\T}.
	\end{equation}
	
	\subsection{Computation of the Safe Set}
	\label{sec:reachability}
	
	We now present how to model the constraint \eqref{eq:terminalRegion}, so that it can be solved using a standard optimization solver. For the constraint ${\R_{\v{x}}(\tend; \widehat{\T}) \subseteq \T}$, we assume $\R_{\v{x}}(\tend; \widehat{\T})$ can be over-approximated using the zonotope $Z(\mat{G}_{\Nts}, \v{c}_{\Nts})$, and $\T = E_q(\mat{H}\Diag(\v{s}), \v{c}_{\T})$ with $\mat{H} = \mat{R}\,\mat{G}_{\mathrm{fixed}}$ for $q=\infty$ and $\mat{H} = \mat{R}$ for $q=2$. By Theorem \ref{thm:main_thm}, ${\R_{\v{x}}(\tend; \widehat{\T}) \subseteq \T}$ holds if there exists a matrix $\mat{Z}$ such that
	\begin{equation}
		\norm{\mat{Z}}_{L_{1,q}^\top} \leq 1, \quad \mat{H}\Diag(\v{s})\mat{Z} = \begin{bmatrix}\mat{G}_{\Nts} & \v{c}_{\Nts}-\v{c}_{\T} \end{bmatrix}.
	\end{equation}
	With $s_j > 0$ for every $j=1,...,m$, we can make the variable transformation $\mat{Z}' = \Diag(\v{s})\mat{Z}$, which yields
	\begin{equation}
		\begin{split}
			&\norm{\Diag(\v{s})^{-1}\mat{Z}'}_{L_{1,q}^\top} \leq 1,\\
			&\mat{H}\Diag(\v{s})\mat{Z}' = \begin{bmatrix}\mat{G}_{\Nts} & \v{c}_{\Nts}-\v{c}_{\T} \end{bmatrix}.
		\end{split}
	\end{equation}
	Let $\v{\zeta}_j'$ be the $j$-th row of $\mat{Z}'$. For $q=\infty$, the constraint $\norm{\Diag(\v{s})^{-1}\mat{Z}'}_{L_{1,q}^\top} \leq 1$ is equivalent to
	\begin{equation}
		\nnorm{\v{\zeta}_j'}_1 \leq s_j, \quad \forall j \in \{1,...,m\},
	\end{equation}
	which can be expressed as a system of linear constraints. As for $q = 2$, the constraint $\norm{\Diag(\v{s})^{-1}\mat{Z}'}_{L_{1,q}^\top} \leq 1$ is equivalent to
	\begin{equation}
		\label{eq:ell_weaker_containment}
		\sum_j \frac{\nnorm{\v{\zeta}_j}^2_1}{s_j^2} \leq 1,
	\end{equation}
	which is not a convex constraint. However, the stronger assumption
	\begin{equation}
		\label{eq:ell_stronger_containment}
		\nnorm{\v{\zeta}_j}_1 \leq \frac{s_j}{\sqrt{m}}, \quad \forall j\in\{1,...,m\}
	\end{equation}
	can be represented as a system of linear constraints. Since \eqref{eq:ell_stronger_containment} implies \eqref{eq:ell_weaker_containment}, we can use \eqref{eq:ell_stronger_containment} as a sufficient (but not necessary) criterion for containment instead.
	
	The final optimization problem we have to solve has the form
	\begin{equation}
		\label{eq:main_opt_problem}
		\begin{split}
			&\max_{\substack{\v{c}_{\T}, s_j \geq 0\\\mathstrut\mat{Z}, \mat{U}_{i}, \v{c}_{u,i}}} \sqrt[m]{s_1\cdot...\cdot s_m}\\
			&\text{subject to}\\
			&\R_{\v{x}}(t; \widehat{\T}) \subseteq \X, \quad \R_{\v{u}}(t; \widehat{\T}) \subseteq \U\\
			&\mat{H}\,\mat{Z} = \begin{bmatrix}\mat{G}_{\Nts} & \v{c}_{\Nts}-\v{c}_{\T} \end{bmatrix},\\
			&\nnorm{\v{\zeta}_j}_1 \leq \frac{s_j}{\sqrt[q]{m}},
		\end{split}
	\end{equation}
	where $\v{\zeta}_j$ are the rows of $\mat{Z}$ and for $q = \infty$ we set $\sqrt[q]{m} = 1$. The cost function $\sqrt[m]{s_1\cdot...\cdot s_m}$, suggested in \cite{gruber}, approximates the volume of $\T$ and is concave, such that the optimization problem may be formulated as a convex conic optimization problem in the variables $\v{c}_{\T}$, $s_j$, $\mat{Z}$, $\mat{U}_{i}$, and $\v{c}_{u,i}$, which together have a combined degree of freedom of $n_x+m+l(m_{\Nts}+1)+n_um_{u}(\Nts+1)+n_u(\Nts+1)$, where $l$ is the number of columns of $\mat{H}$ (either $m$ if $q=\infty$ or $n_x$ if $q = 2$) and $m_{\Nts}$ is the number of columns of $\mat{G}_{\Nts}$. By induction, one can show $m_{\Nts} = m + \Nts n_w (\eta+1) n_x$, where $\eta$ is a parameter of the algorithm, known as the Taylor order in \cite{althoff_dissertation}. Therefore, solving \eqref{eq:main_opt_problem} requires $\mathcal{O}(M^{3.5}\ln(M))$ operations with $M := l(m + \Nts n_wn_x\eta)+n_um_{u}\Nts$, according to Remark \ref{rmk:optimization_complexity}.
	When deploying the controller, it remains to compute $\v{u}(t)$ for a given initial point $\v{x}_0$ and time $t$. As discussed in \eqref{eq:x_0_param}, this can be done by computing a solution $\v{\beta} \in \ball_q$ of
	\begin{equation}
		\v{x}_0 = \v{c}_{\widehat{\T}} + \mat{G}_{\T}\v{\beta},
	\end{equation}
	which can be solved using linear programming in the case $q = \infty$. If $q=2$, by construction $\mat{G}_{\T}$ is necessarily invertible, thus $\v{\beta}$ can be determined without solving an optimization problem:
	\begin{equation}
		\v{\beta} = \mat{G}_{\T}^{-1}\left(\v{x}_0 - \v{c}_{\widehat{\T}}\right).
	\end{equation}
	Note that this has a significantly faster runtime than the approach for $q=\infty$ that requires solving a linear program.
	
	\subsection{Numerical Results for Platoon Benchmark}
	We apply our methods to the $2k$-dimensional platooning benchmark with $k$ vehicles. This model is described in \cite[Section IV.]{schuermann2017} for $k=4$, but can easily be generalized to arbitrary $k\in\naturals$. The model is explicitly given by the system
	\begin{equation}
		\label{eq:platoon_4_vehicles}
		\begin{split}
			&\dot{x}_1 = x_2,\\
			&\dot{x}_{2i-1} = x_{2i},\\
			&
		\end{split}
		\quad
		\begin{split}
			&\dot{x}_2 = u_1 + w_1,\\
			&\dot{x}_{2i} = u_{i-1} - u_i + w_{i-1} - w_i \text{ for } i=2,...,k,\\
			&
		\end{split}
	\end{equation} 
	where $x_1$ and $x_2$ are the position and velocity of the leading vehicle, and $x_{2i-1}$ and $x_{2i}$ for $i\geq 2$ are the position and velocity of the $i$-th vehicle relative to the previous vehicle. We assume $\U = [\SI{-10}{\metre\per\second\tothe{2}},\SI{10}{\meter\per\second\tothe{2}}]^k$, $\W = [\SI{-1}{\meter\per\second\tothe{2}},\SI{1}{\meter\per\second\tothe{2}}]^k$, and constrain the state such that the vehicles do not collide, i.e., $x_{2i-1}\geq 0$ for $i\geq 2$. Zonotopes and ellipsoids are implemented using the CORA toolbox \cite{cora}. In addition, we solve convex programs using the MOSEK optimization suite \cite{mosek}, in combination with the YALMIP toolbox \cite{Lofberg2004}. We also compare our methods to the approach from \cite{gruber} implemented in the AROC toolbox \cite{aroc}. All computations were made in MATLAB on an Intel Core i7-8650U CPU @1.9GHz with 24GB memory. The scripts of our results, including the code generating the figures in this document, are available at
	
	\medskip
	
	\begin{center}
		\url{https://codeocean.com/capsule/5598835/tree}
	\end{center}
	
	\medskip
	
	The algorithms presented in Section \ref{sec:containment_problem} will be made available for the next CORA release, and the algorithms from Section \ref{sec:applications} will be accessible for the next AROC release.
	
	For $k\in\{2,4,6,8,10\}$, we used $\Nts=10\cdot(k+1)$ time steps with time-step size $\SI{0.1}{s}$, and chose $m=5k$ fixed generators for zonotopes, and $m=2k$ for ellipsoids. We compared the runtime of our algorithms to that of \cite{gruber}, which for the remainder of this section we refer to as the terminal set approach. As a heuristic for the overall size of the safe set, we computed the volume of the safe set (for $k=10$ and when the safe set was a zonotope, we did not compute the volume, as this would have required several days of computation). To compare how fast each approach computes the controller, we simulated 100 random trajectories starting at random points on the boundary of $\T$, while evaluating the controller at each time step for random disturbances sampled uniformly from $\W$. All results are displayed in Tab.~\ref{table:runtimes}-\ref{table:volume}. We also provide a graphical representation of the projection of the safe set on the first 2 dimensions for $k = 4$ in Fig.~\ref{fig:large_platoon}.
	
	Both our approaches yield significantly better results than \cite{gruber} with respect to the size of the safe set, which is due to the fact that the algorithm in \cite{gruber} returns a set of states that can be steered into a fixed terminal set. 
	This is more conservative than our methods, which have a variable initial and terminal set. In addition, both our algorithms were faster by several orders of magnitude, both in terms of total runtime and the computation time of the controller; in fact, the algorithm from \cite{gruber} would have taken several days of computation for $k\geq 6$, which is why we decided not to compute the results in those cases. 
	
	Comparing our zonotope and ellipsoid approaches, we see zonotopes yield a larger safe set, though in practice representing terminal regions in model predictive control using ellipsoids is beneficial, since the terminal constraint $\v{x} \in \T$ can be formulated using a single second-order cone constraint, whereas using the point-in-zonotope containment encoding from \cite{Kulmburg2021} requires $\mathcal{O}(n_x^2)$ auxiliary variables, and converting the zonotope into an H-polytope introduces up to $2\binom{m}{n_x}$ linear constraints \cite[Section 5.1.2.]{ALTHOFF2010233}. Furthermore, the computation of the controller is faster by a factor of about 25\% for ellipsoids, which suggests ellipsoids would be better suited for real-time applications.
	\begin{figure}[t!]
		\centering
		\definecolor{mycolor1}{rgb}{0.00000,0.36078,0.67059}%
		\definecolor{mycolor2}{rgb}{0.89020,0.10588,0.13725}%
		\definecolor{mycolor3}{rgb}{1.00000,0.76471,0.14510}%

		\label{table:volume}
	\end{table}
	
	\section{Conclusion}
	We have explored three distinct aspects of the ellipsotope containment problem: First, we established a link between the containment problem and the computation of generalized matrix norms, which allowed us to formulate strategies to solve, or at least approximate, the containment problem based on already known methods for matrix norms. We then exploited this link to determine the computational complexity of the containment problem, showing that in most instances the problem is at least $\tau$-inapproximable. Finally, we demonstrated how some of the approximative methods deduced in Section \ref{sec:containment_problem} can be used to compute safe terminal regions for dual mode model predictive control in Section \ref{sec:applications}.
	
	There remain several open questions about the containment problem for ellipsotopes, for instance concerning the cases $1<p<2<q<\infty$, where nothing is known about the approximability. Even for certain cases where inapproximability results are known (e.g., the zonotope-in-zonotope containment problem, when $p=q=\infty$), current algorithms seem to perform better than expected. On the other hand, we have only applied the containment problem to linear time-invariant systems. It would be interesting to apply similar methods to systems with an output feedback loop, or even to nonlinear systems.
	
	\appendix
	
	\subsection{Proof of Theorem \ref{thm:not_FPTAS}}
	\label{sec:hardness_ell_in_zono}
	Our aim is to formulate the problem of computing $\norm{\mat{A}}_{p\mapsto 1}$ as a special instance of the ellipsotope-in-zonotope containment problem, thus showing that the containment problem is at least as hard to solve as calculating the $p\mapsto1$-norm of a matrix. The key idea consists in constructing a continuous function $\sigma(\rho)$ which satisfies $\sigma(\rho) = 1$ if and only if $\rho = \norm{\mat{A}}_{p\mapsto 1}$ for some matrix $\mat{A}$. Evaluating $\sigma(\rho)$ will be shown to be equivalent to a containment problem, so that we can approximate $\norm{\mat{A}}_{p\mapsto 1}$ by searching for a root of $\sigma(\rho) - 1$. The special instance of the containment problem we will use (of which a graphical representation can be found in Fig.~\ref{fig:gritzmann_kulmburg_construct}) is constructed as follows:
	For $n \in \naturals$ with $n> 1$, let
	\begin{equation}
		\label{eq:apx_H_def}
		\begin{split}
			&\mat{H} := \begin{pmatrix}
				\mat{I}_n & -\mat{I}_n\\
				\v{1}_n^{\,\top} & \v{1}_n^{\,\top}
			\end{pmatrix} \in \reals^{(n+1) \times 2n},
		\end{split}
	\end{equation}
	and define the circumbody $\Zcirc$ as
	\begin{equation}
		\Zcirc := \Set{\mat{H}\v{\beta}}{\nnorm{\v{\beta}}_{\infty}\leq 1, \v{\beta}\geq \v{0}},
	\end{equation}
	where $\v{\beta} \geq \v{0}$ means $\beta_i \geq 0$ for all $i=1,...,2n$. For technical reasons, we also consider the set
	\begin{align*}
		&\ball_1 \times [1,2n-1] =\\
		&\qquad\Set{\v{x}\in\reals^{n+1}}{\sum_{i=1}^n|x_i| \leq 1 \text{ and } x_{n+1}\in [1,2n-1]},
	\end{align*}
	which is a tube of height $2n-2$, with $\ball_1$ being a cross-section. The set $\Zcirc$ is indeed a zonotope, containing $\ball_1 \times [1,2n-1]$ in its middle part (see Fig.~\ref{fig:gritzmann_kulmburg_construct}), and slices of its bottom part are scaled versions of the cross-polytope $\ball_1$: 
	\begin{lemma}
		\label{lmm:classical_zono_cut}
		The set $\Zcirc$ is a zonotope with $\Zcirc = Z(\frac{1}{2}\mat{H}, n\v{e}_{n+1})$, and for any $\rho \in [0,1]$
		\begin{equation}
			\label{eq:cut_is_cross_polytope}
			\Set{\v{x}\in\reals^{n+1}}{x_{n+1} = \rho} \cap \Zcirc = (\rho \, \ball_1) \times \{\rho\}.
		\end{equation}
		Furthermore, $\ball_1 \times [1,2n-1] \subset \Zcirc$.
	\end{lemma}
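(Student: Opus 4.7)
My plan is to verify the three claims of the lemma separately: (i) the identity $\Zcirc = Z(\tfrac{1}{2}\mat{H}, n\v{e}_{n+1})$; (ii) the cross-section formula \eqref{eq:cut_is_cross_polytope}; and (iii) the inclusion $\ball_1 \times [1,2n-1] \subset \Zcirc$.

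For (i), I would apply the affine change of variables $\v{\beta} = \tfrac{1}{2}(\v{\alpha} + \v{1}_{2n})$, which is a bijection between $\{\v{\alpha} : \|\v{\alpha}\|_\infty \leq 1\}$ and $[0,1]^{2n} = \{\v{\beta} : \|\v{\beta}\|_\infty \leq 1, \v{\beta} \geq \v{0}\}$. A direct computation using the block structure of $\mat{H}$ shows $\mat{H}\v{1}_{2n} = 2n\,\v{e}_{n+1}$, since the positive and negative copies of $\mat{I}_n$ in the top $n$ rows cancel, while the bottom row of all ones contributes $2n$. Hence $\mat{H}\v{\beta} = \tfrac{1}{2}\mat{H}\v{\alpha} + n\v{e}_{n+1}$, giving $\Zcirc = Z(\tfrac{1}{2}\mat{H}, n\v{e}_{n+1})$.

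For (ii), note that the last row of $\mat{H}$ is $(\v{1}_n^\top \; \v{1}_n^\top)$, so $(\mat{H}\v{\beta})_{n+1} = \sum_{i=1}^{2n} \beta_i$, while the first $n$ coordinates are $x_i = \beta_i - \beta_{n+i}$. For the ``$\subseteq$'' direction, if $(\mat{H}\v{\beta})_{n+1} = \rho$ then $\sum_{i=1}^n|x_i| \leq \sum_{i=1}^n(\beta_i + \beta_{n+i}) = \rho$, using $\beta_j\geq 0$. For the ``$\supseteq$'' direction, given $\v{x}_{1:n} \in \rho\ball_1$ with $\rho \in [0,1]$, I would start from the natural choice $\beta_i = x_i^+$, $\beta_{n+i} = x_i^-$ (positive/negative parts), which yields the correct first $n$ coordinates and a last coordinate equal to $\|\v{x}_{1:n}\|_1$; the remaining mass $\rho - \|\v{x}_{1:n}\|_1 \geq 0$ must then be distributed as $(s_i/2, s_i/2)$ among the pairs $(\beta_i, \beta_{n+i})$ without breaking the bound $\beta_j \leq 1$. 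Since each pair has slack $1 - \max(x_i^+, x_i^-) = 1 - |x_i|$, the total available slack is $n - \|\v{x}_{1:n}\|_1$, while the required mass per pair is halved, so the capacity $2n - 2\|\v{x}_{1:n}\|_1$ comfortably exceeds $\rho - \|\v{x}_{1:n}\|_1$ whenever $\rho \leq 2n - \|\v{x}_{1:n}\|_1$, which holds here since $\rho \leq 1$ and $n > 1$.

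For (iii), the construction is the same as in (ii), but with the slice parameter $\rho$ now allowed to range over $[1, 2n-1]$. The only thing to recheck is that the total slack $2n - 2\|\v{x}_{1:n}\|_1$ still exceeds the required excess $\rho - \|\v{x}_{1:n}\|_1$, which reduces to $\rho \leq 2n - \|\v{x}_{1:n}\|_1$; since $\rho \leq 2n-1$ and $\|\v{x}_{1:n}\|_1 \leq 1$, this always holds. The main obstacle I anticipate is the bookkeeping in this distribution argument, namely ensuring simultaneously that $\v{\beta}\in[0,1]^{2n}$, that the first $n$ coordinates of $\mat{H}\v{\beta}$ equal $\v{x}_{1:n}$, and that the last coordinate equals $\rho$; everything else in the lemma is essentially a matter of unpacking definitions of $\mat{H}$.
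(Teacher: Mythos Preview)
Your proposal is correct. For parts (i) and (ii) the paper simply cites an earlier article rather than arguing directly, so your self-contained arguments are an improvement in exposition. For part (iii), however, your route genuinely differs from the paper's: you reuse the ``mass-distribution'' construction from (ii), verifying that for every point of $\ball_1\times[1,2n-1]$ one can still find an admissible $\v{\beta}\in[0,1]^{2n}$; the paper instead observes that $\ball_1\times[1,2n-1]$ is a polytope, lists its $4n$ vertices $(\pm\v{e}_i,1)$ and $(\pm\v{e}_i,2n-1)$, exhibits explicit preimages $\v{\beta}\in\{-1,1\}^{2n}$ for each in the $Z(\tfrac{1}{2}\mat{H},n\v{e}_{n+1})$ representation, and invokes convexity. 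The paper's argument is shorter and avoids the slack bookkeeping, while yours has the virtue of being uniform with (ii) and of making transparent exactly which slices $\{x_{n+1}=\rho\}$ of $\Zcirc$ contain $\ball_1\times\{\rho\}$ (namely all $\rho\in[1,2n-1]$, with equality at the endpoints).
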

	\begin{proof}
		We proved $\Zcirc = Z(\frac{1}{2}\mat{H}, n\v{e}_{n+1})$ in \cite{Kulmburg2021} in the discussion after equation (33), and \eqref{eq:cut_is_cross_polytope} was proven in \cite[Lemma 5.]{Kulmburg2021}, so it remains to prove $\ball_1 \times [1,2n-1] \subset \Zcirc$. The set $\ball_1 \times [1,2n-1]$ is clearly a polytope with vertices $\v{v}_{1, i, \pm} := \begin{bmatrix}\pm\v{e}_i^\top & 1\end{bmatrix}^\top$ and $\v{v}_{2n-1, i, \pm} := \begin{bmatrix}\pm\v{e}_i^\top & 2n-1\end{bmatrix}^\top$ for $i=1,...,n$. By convexity, it suffices to prove that all these vertices are contained in $\Zcirc$. We define the following vectors in $\reals^{2n}$ for $i=1,...,n$:
		\begin{align*}
			&\v{\beta}_{1, i, +} := -\v{1} + 2\v{e}_i, \quad &\v{\beta}_{1, i, -} := -\v{1} + 2\v{e}_{n+i},\\
			&\v{\beta}_{2n-1, i, +} := \v{1} - 2\v{e}_{n+i}, \quad & \v{\beta}_{1,i,-} := \v{1} - 2\v{e}_i.
		\end{align*}
		One can verify that these vectors satisfy the relations
		\begin{align*}
			&\frac{1}{2}\mat{H}\v{\beta}_{1, i, \pm} + n\v{e}_{n+1} = \v{v}_{1, i, \pm},\\
			&\frac{1}{2}\mat{H}\v{\beta}_{2n-1, i, \pm} + n\v{e}_{n+1} = \v{v}_{2n-1, i, \pm},\\
			&\nnorm{\v{\beta}_{1, i, \pm}}_{\infty} \leq 1, \; \nnorm{\v{\beta}_{2n-1, i, \pm}}_{\infty} \leq 1,
		\end{align*}
		which shows $\v{v}_{1, i, \pm}, \v{v}_{2n-1, i, \pm} \in Z(\frac{1}{2}\mat{H}, n\v{e}_{n+1}) = \Zcirc$.	
	\end{proof}
	
	For the inbody, we use the ellipsotope $\Ein = E_p(\mat{G}_{\mat{A}}, n\v{e}_{n+1})$, where
	\begin{equation}
		\label{eq:apx_G_def}
		\mat{G}_{\mat{A}} = \begin{pmatrix}
			\mat{A} & \v{0} \\
			\v{0}^{\,\top}& -L
		\end{pmatrix},
	\end{equation}
	with $\mat{A} \in \reals^{n\times m}$ being an arbitrary matrix. If $L$ is chosen appropriately, $\Ein$ touches the boundary of $\Zcirc$ in the top and bottom part of $\Zcirc$ if and only if $\norm{\mat{A}}_{p\mapsto 1} = 1$. The ideal way to choose $L$ is described in the following Lemma; the proof that it satisfies the aforementioned properties will be investigated later in Lemma \ref{lmm:main_technical_lemma_apx}:
	\begin{lemma}
		\label{lmm:L_properties}
		For $n\in\naturals\backslash\{1\}$ and $p\in(1,\infty)$, there exist $L \in (n-1,n)$ and $\varrho \in (0,1)$ satisfying the equations
		\begin{align}
			&1- \varrho^p = \left(\frac{n-\varrho}{L}\right)^p,\label{eq:varrho_L_1}\\
			&L = \left(1-\left(\frac{n-\varrho}{L}\right)^p\right)^{\frac{1}{p}-1}\left(\frac{n-\varrho}{L}\right)^{p-1}.\label{eq:varrho_L_2}
		\end{align}
		For $p = \infty$, we set $L = n-1$ and $\varrho = 1$.
	\end{lemma}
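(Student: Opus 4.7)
The plan is to solve the system explicitly in closed form rather than by a fixed-point or continuity argument, since the two equations admit a clean algebraic reduction. I introduce the auxiliary variable $t := (n-\varrho)/L$, so that equation \eqref{eq:varrho_L_1} reads $\varrho^p + t^p = 1$. Substituting $1 - t^p = \varrho^p$ into equation \eqref{eq:varrho_L_2} gives $L = \varrho^{1-p}\, t^{p-1}$, which is equivalent to the simple relation $t = L^{1/(p-1)}\varrho$.

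Next, I would combine the two identities $t = L^{1/(p-1)}\varrho$ and $t = (n-\varrho)/L$ to obtain
\begin{equation*}
    (1 + L^{p/(p-1)})\,\varrho = n,
\end{equation*}
while the constraint $\varrho^p + t^p = 1$ becomes
\begin{equation*}
    (1 + L^{p/(p-1)})\,\varrho^p = 1.
\end{equation*}
Dividing the second equation by the first yields $\varrho^{p-1} = 1/n$, and therefore $\varrho = n^{-1/(p-1)}$, from which one immediately reads off $1 + L^{p/(p-1)} = n^{p/(p-1)}$, giving the explicit formula
\begin{equation*}
    L = \left(n^{p/(p-1)} - 1\right)^{(p-1)/p}.
\end{equation*}

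It remains to verify the range conditions. Since $p\in(1,\infty)$ and $n\geq 2$, $\varrho = n^{-1/(p-1)}$ clearly lies in $(0,1)$. The bound $L < n$ is immediate from $n^{p/(p-1)} - 1 < n^{p/(p-1)}$ and the monotonicity of $x \mapsto x^{(p-1)/p}$. For $L > n-1$, setting $s := p/(p-1) > 1$, I need $n^s - (n-1)^s > 1$, which I would establish by defining $f(x) = x^s - (x-1)^s$ on $[1,\infty)$: since $f'(x) = s\,(x^{s-1} - (x-1)^{s-1}) > 0$ for $s > 1$ and $x > 1$, $f$ is strictly increasing, and as $f(1) = 1$ we conclude $f(n) > 1$ for all integers $n\geq 2$.

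The argument is essentially computational, so I do not anticipate a serious obstacle; the only subtlety is ensuring the substitutions remain well-defined (in particular $\varrho \neq 0$, which is guaranteed once one checks $\varrho = n^{-1/(p-1)} > 0$). The case $p = \infty$ is handled separately by the convention in the statement, and one can observe as a sanity check that the closed-form expressions for $L$ and $\varrho$ tend to $n-1$ and $1$ respectively as $p\to\infty$, confirming consistency with the definition in the last sentence of the lemma.
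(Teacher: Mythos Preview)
Your proof is correct and takes a genuinely different route from the paper. The paper eliminates $L$ to obtain the single equation $(1-\varrho^p)^{p^2-p+1} - (n-\varrho)^p\varrho^{p^2-p} = 0$ and invokes the intermediate value theorem on $[0,1]$ to guarantee existence of $\varrho$; it then bounds $L$ by two separate arguments (a contradiction for $L<n$, and a minimisation of $g(\varrho)=(n-\varrho)/(1-\varrho^p)^{1/p}$ for $L>n-1$). You instead solve the system in closed form, obtaining $\varrho = n^{-1/(p-1)}$ and $L = (n^{p/(p-1)}-1)^{(p-1)/p}$ explicitly, and then verify the range conditions directly. Amusingly, the paper's minimiser of $g$ is exactly your $\varrho$, and its minimum value $g(n^{-1/(p-1)})$ simplifies to your $L$, but the paper never observes that this point is \emph{the} solution.

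Your approach buys an explicit formula (useful since the paper later remarks that $L,\varrho$ must be computed numerically, which your formula renders trivial) and a shorter argument overall. The paper's IVT approach is perhaps more robust to variants of the system where no closed form exists, but for this lemma your method is strictly more informative. Your monotonicity argument for $n^s-(n-1)^s>1$ via $f(x)=x^s-(x-1)^s$ is also cleaner than the paper's somewhat terse assertion that $n^{1/p}(n-n^{-1/(p-1)})^{1-1/p}>n-1$ ``for $n>1$''.
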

	\begin{proof}
		We set $\varrho := n^{-1/(p-1)}$ and
		\begin{equation}
			\label{eq:alt_form_L}
			L:= \frac{n-\varrho}{(1-\varrho^{p})^{1/p}}.
		\end{equation}
		Assuming we establish $L > 0$, the definition of $L$ is directly equivalent to \eqref{eq:varrho_L_1}, and we can verify that \eqref{eq:varrho_L_2} is valid by plugging in the definition of $L$ and $\varrho$. The fact that $\varrho \in (0,1)$ directly follows from the facts that $n>1$. Thus, it remains to show $L \in (n-1,n)$. Using the explicit formula of $\varrho$ in the formula of $L$, we see that
		\begin{equation*}
			L = n\cdot (1-n^{-1-\frac{1}{p-1}})^{1-1/p}.
		\end{equation*}
		The right term is clearly smaller than $1$ since $n > 1$ and $p \in (1,\infty)$, which establishes $L \leq n$, so we only need to show $L \geq n-1$. This can easily be seen to be equivalent to
		\begin{align*}
			&n-n^{-\frac{1}{p-1}} \geq (n-1)(1-n^{-\frac{p}{p-1}})^{1/p}\\
			\Leftrightarrow& n - n^{-\frac{1}{p-1}} \geq n(1-n^{-\frac{p}{p-1}})^{1/p} - (1-n^{-\frac{p}{p-1}})^{1/p}.
		\end{align*}
		Therefore, it would suffice to prove the two separate inequalities
		\begin{align}
			n \geq n(1-n^{-\frac{p}{p-1}})^{1/p},\label{eq:first_separate_inequality}\\
			n^{-\frac{1}{p-1}} \leq (1-n^{-\frac{p}{p-1}})^{1/p}.\label{eq:second_seprate_inequality}
		\end{align}
		Since $n > 1$, \eqref{eq:first_separate_inequality} is trivial, and \eqref{eq:second_seprate_inequality} is equivalent to
		\begin{align*}
			& n^{-\frac{p}{p-1}} \leq 1-n^{-\frac{p}{p-1}}\\
			\Leftrightarrow & 2 \cdot \frac{1}{2^{p}} \leq n.
		\end{align*}
		But since the stronger condition $n \geq 2$ holds, we conclude that \eqref{eq:second_seprate_inequality} also holds, and thus, by extension, $L \geq n-1$.
	\end{proof}
	\begin{figure}[b!]
		\centering
		\begin{tikzpicture}[scale=1]
			\draw [->,thick] (0,0) -- (0,4.5);
			\draw [->,thick] (-3.5,0) -- (3.5,0) node (xaxis) [right] {$\mathbb{R}^n$};
			
			\fill [black] (0,0) circle (2pt);   
			\draw (0,0) node (origin) [below] {$\vec{0}_{n+1}$};
			
			\draw [Blue] (0,0) -- (1,1) -- (2,2) -- (1,3) -- (0,4) -- (-1,3) -- (-2,2) -- (-1,1) -- (0,0);
			
			\draw [Blue] (2.4,3.1) node {$\Zcirc$};
			\draw [solid] (1.5, 2.5) -- (2.2,3);
			
			\draw [dashed, YellowDark] (-1,1) -- (1,1) -- (1,3) -- (-1,3) -- (-1,1);
			\draw [solid] (0.2, 3) -- (0.9,4.2);
			\draw (0.5, 4.5) node [right, YellowDark] {$\ball_1 \times[1, (2n-1)]$};
			
			\draw[color=Red] (0,2) ellipse (1 and 1.7348);
			\draw [solid] (-1.5,4) -- (-0.9,0.7562+2);
			\draw (-1.8, 4.3) node [right, Red] {$\Ein$};
			
			\fill [black] (0,2) circle (2pt);
			\draw (0,2) node [above right]{$\vec{c}$};
			
			\draw [dotted] (-3.5,1) -- (-1,1);
			\draw [dotted] (-3.5,3) -- (-1,3);
			
			\draw (-2.8,0.5) node {$S_{\text{bottom}}$};
			\draw (-2.8,2) node {$S_{\text{middle}}$};
			\draw (-2.8,3.5) node {$S_{\text{top\phantom{tom}}}$};
			
			\draw [dotted] (0,2-1.7348) -- (2.5,2-1.7348);
			\draw [dotted] (0,2) -- (2.5,2);
			
			\draw [<->,solid] (2.5,2-1.7348) -- (2.5,2);
			\draw [<->,solid] (2.5,0) -- (2.5,2-1.7348);
			
			\draw (2.8,1.2) node {$L$};
			\draw (2.8,0.16) node {$\varrho$};

		\end{tikzpicture}
		\caption{A graphical representation of $\Ein$ and $\Zcirc$ used for the proof of Theorem \ref{thm:not_FPTAS}. For $\v{z}\in\reals^{n+1}$, the horizontal axis represents the first $n$ coordinates of $\v{z}$, whereas the vertical axis represents the $(n+1)$-th entry of $\v{z}$. The vector $\v{c} = n\v{e}_{n+1}$ is the common center of $\Ein$ and $\Zcirc$. In this construction, we assume $\norm{\mat{A}}_{p\mapsto 1} = 1$, in which case the middle part of $\Ein$ fits tightly into the tube $\ball_1\times[1,(2n-1)]$.}
		\label{fig:gritzmann_kulmburg_construct}
	\end{figure}
	We now show that with this choice of $L$ (and $\varrho$), $\Ein$ touches the boundary of $\Zcirc$ if and only if $\norm{\mat{A}}_{p\mapsto 1} = 1$, and $\norm{\mat{A}}_{p\mapsto 1} > 1$ entails that $\Ein$ can not be contained in $\Zcirc$, whereas $\norm{\mat{A}}_{p\mapsto 1} < 1$ means that $\Ein$ is strictly contained in $\Zcirc$, without touching its boundary:
	\begin{lemma}
		\label{lmm:main_technical_lemma_apx}
		Let $p\in(1,\infty]$, $\mat{A} \in \reals^{n\times m}$, and suppose $\mat{G}_{\mat{A}}$ and $\mat{H}$ are chosen as in \eqref{eq:apx_G_def} and \eqref{eq:apx_H_def}, respectively. Then,
		\begin{equation}
			\label{eq:r_lemma_eq}
			r(E_p(\mat{G}_{\mat{A}}), Z(\mat{H})) = 1 \text{ if and only if } \norm{\mat{A}}_{p\mapsto 1} = 1.
		\end{equation}
		This remains true if the equalities in \eqref{eq:r_lemma_eq} are replaced by $<$ or $>$.
	\end{lemma}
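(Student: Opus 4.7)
The plan is to reduce the containment $\Ein \subseteq \Zcirc$ to a slice-by-slice analysis along the $(n+1)$-th coordinate axis. Since both sets are centrally symmetric about the common center $\v{c} = n\v{e}_{n+1}$, it suffices to examine the bottom half, parametrized by $t \in [0,1]$ via the height $x_{n+1} = n - Lt$. A direct computation from \eqref{eq:apx_G_def} shows that the slice of $\Ein$ at this height, projected on the first $n$ coordinates, equals $(1-t^p)^{1/p}\mat{A}\ball_p$ (with the natural modification for $p=\infty$, where the prefactor becomes simply $1$). For the corresponding slice of $\Zcirc$, I would combine Lemma \ref{lmm:classical_zono_cut} (which handles the wedge $x_{n+1}\in[0,1]$) with a direct calculation using \eqref{eq:apx_H_def}: for the middle region $x_{n+1}\in[1,n]$ the slice becomes $\{\v{u}\in\reals^n : \nnorm{\v{u}}_\infty \leq 1,\, \nnorm{\v{u}}_1 \leq n - Lt\}$.

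Setting $M := \nnorm{\mat{A}}_{p\mapsto 1}$, slice containment translates into two families of inequalities: one of the form $(1-t^p)^{1/p}\nnorm{\mat{A}}_{p\mapsto\infty}\leq 1$ (coming from the $\nnorm{\cdot}_\infty$ bound active in the middle region), and one of the form $(1-t^p)^{1/p}M \leq n - Lt$ (coming from the $\nnorm{\cdot}_1$ bound present throughout). Because $\nnorm{\mat{A}}_{p\mapsto\infty}\leq M$ and $(1-t^p)^{1/p}\leq 1$, the first family is subsumed as soon as $M \leq 1$, so the containment $\Ein \subseteq \Zcirc$ reduces to the single condition
\begin{equation*}
M \leq h(t) := \frac{n-Lt}{(1-t^p)^{1/p}} \quad \text{for all } t\in[0,1].
\end{equation*}

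The core of the argument is then to show $\min_{t\in[0,1]} h(t) = 1$. I would solve $h'(t)=0$ to obtain the unique interior critical point $t^* = (L/n)^{1/(p-1)}$, and then use the defining relations \eqref{eq:varrho_L_1}--\eqref{eq:varrho_L_2} to verify $h(t^*) = 1$. Concretely, combining the two equations yields $t^* = (n - \varrho)/L$, and the identity $1 - \varrho^p = t^{*p}$ then gives $h(t^*) = \varrho/\varrho = 1$. I expect establishing this identification between the critical point of $h$ and the solution $(\varrho,L)$ from Lemma \ref{lmm:L_properties} to be the main technical obstacle. The boundary values $h(0)=n > 1$ and $h(1) = +\infty$ confirm that $t^*$ gives the global minimum. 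The degenerate case $p=\infty$ (where $L = n-1$ collapses the bottom wedge to the single height $x_{n+1}=1$) must be treated by a short, separate elementary argument.

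Finally, because $h(t) \geq 1$ on $[0,1]$ with equality only at $t^*$, the reduced inequality $M \leq h(t)$ holds for all $t$ iff $M \leq 1$, and holds strictly iff $M < 1$. Using that $r(\Ein,\Zcirc) \leq 1$ iff $\Ein\subseteq\Zcirc$, and $r(\Ein,\Zcirc) < 1$ iff $\Ein$ lies in the interior of $\Zcirc$ (both of which follow from compactness of $\Ein$ and the definition of $r$), the three equivalences for $=$, $<$, $>$ in \eqref{eq:r_lemma_eq} follow at once by negation.
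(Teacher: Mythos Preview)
Your proposal is correct and follows essentially the same slice-by-slice strategy as the paper: both analyze horizontal sections of $\Ein$ and $\Zcirc$, reduce containment to a one-variable inequality, and use the defining relations \eqref{eq:varrho_L_1}--\eqref{eq:varrho_L_2} to pin down the touching point. The only cosmetic differences are that you parametrize by $t=(n-\rho)/L$ and work with $h(t)=(n-Lt)/(1-t^p)^{1/p}$, whereas the paper parametrizes by $\rho$ and works with $f(\rho)=\rho-\theta(\rho)$ (so $f\geq 0 \Leftrightarrow h\geq 1$), and the paper handles the middle region via the cruder inclusion $\ball_1\times[1,2n-1]\subset\Zcirc$ rather than your explicit slice formula --- both routes lead to the same critical-point computation.
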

	\begin{proof}
		Since
		\begin{equation}
			r(E_p(\mat{G}_{\mat{A}}), Z(\mat{H})) = r(E_p(\mat{G}_{\mat{A}}, n\v{e}_{n+1}), Z(\mat{H}, n\v{e}_{n+1})),
		\end{equation}
		we focus on the containment $E_p(\mat{G}_{\mat{A}}, n\v{e}_{n+1}) \subseteq Z(\mat{H}, n\v{e}_{n+1})$, as it makes key arguments easier to follow. Our first goal is to show that if $\norm{\mat{A}}_{p\mapsto 1} = 1$, then $E_p(\mat{G}_{\mat{A}}, n\v{e}_{n+1}) \subseteq Z(\mat{H}, n\v{e}_{n+1})$, and that this containment is tight in the sense that some point in $E_p(\mat{G}_{\mat{A}}, n\v{e}_{n+1})$ lies on the boundary of $Z(\mat{H}, n\v{e}_{n+1})$. 	
		For this purpose, we define the three regions $S_{\mathrm{bottom}} = \reals^n\times [0,1]$, $S_{\mathrm{middle}} = \reals^n\times[1,2n-1]$, and $S_{\mathrm{top}} = \reals^n\times[2n-1, 2n]$ (see also Fig. \ref{fig:gritzmann_kulmburg_construct}). Clearly, $Z(\mat{H}, n\v{e}_{n+1})$ is contained in the union of these three sets, and $E_p(\mat{G}_{\mat{A}}, n\v{e}_{n+1}) \subseteq Z(\mat{H}, n\v{e}_{n+1})$ holds if and only if $(S \cap E_p(\mat{G}_{\mat{A}}, n\v{e}_{n+1})) \subseteq Z(\mat{H}, n\v{e}_{n+1})$ for $S = S_{\mathrm{bottom}}, S_{\mathrm{middle}},$ and $S_{\mathrm{top}}$. Since $E_p(\mat{G}_{\mat{A}}, n\v{e}_{n+1})$ is centrally symmetric around $n\v{e}_{n+1}$, it suffices to analyze containment for $S_{\mathrm{bottom}}$ and $S_{\mathrm{middle}}$ (i.e., the analysis for $S_{\mathrm{bottom}}$ and $S_{\mathrm{top}}$ is identical).
		
		We begin with the middle part for $p \neq \infty$: Let $\rho \in [0,2n]$, and consider the slice $\Set{\v{x}\in\reals^{n+1}}{x_{n+1} = \rho} \cap E_p(\mat{G}_{\mat{A}}, n\v{e}_{n+1})$, which is either empty (this can only happen if $\rho > n+L$ or $\rho < n - L$) or consists of the points
		\begin{align*}
			\v{x} = \begin{bmatrix}
				\mat{A}\v{\alpha}'\\
				-L\alpha_{m+1} + n
			\end{bmatrix},&\text{ such that } \left(\norm{\v{\alpha}'}_p^p + |\alpha_{m+1}|^p\right)^{1/p} \leq 1,\\
			& \quad \text{and } -L\alpha_{m+1} + n = \rho.
		\end{align*}
		This is equivalent to
		\begin{equation*}
			\v{x} = \begin{bmatrix}
				\mat{A}\v{\alpha}'\\
				\rho
			\end{bmatrix}, \text{ such that } \norm{\v{\alpha}'}_p \leq \left(1 - \left|\frac{n-\rho}{L}\right|^p\right)^{1/p}.
		\end{equation*}
		We define the radius $\theta(\rho)$ of this slice with respect to the 1-norm as $\theta(\rho) = 0$ for $\rho \not\in [n-L,n+L]$, and for $\rho \in [n-L,n+L]$
		\begin{equation}
			\label{eq:theta_def}
			\begin{split}
				\theta(\rho) &= \max_{\v{\alpha}'} \Set{\norm{\mat{A}\v{\alpha}'}_1}{\norm{\v{\alpha}'}_p \leq \left(1 - \left|\frac{n-\rho}{L}\right|^p\right)^{1/p}}\\
				&= \left(1 - \left|\frac{n-\rho}{L}\right|^p\right)^{1/p} \cdot \norm{\mat{A}}_{p\mapsto 1}\\
				&=\left(1 - \left|\frac{n-\rho}{L}\right|^p\right)^{1/p},
			\end{split}
		\end{equation}
		where the second equality follows from the absolute homogeneity of norms and \cite[Proposition III.2.1.]{conway_course_2007}, and for the last equality we used $\norm{\mat{A}}_{p\mapsto 1} = 1$.
		The maximum of $\theta(\rho)$ occurs at $\rho = n$, where $\theta(n) = 1$. By Lemma \ref{lmm:classical_zono_cut}, we conclude $(S_{\mathrm{middle}} \cap E_p(\mat{G}_{\mat{A}}, n\v{e}_{n+1})) \subseteq \ball_1 \times [1,2n-1] \subseteq Z(\mat{H}, n\v{e}_{n+1})$.
		If $p = \infty$, the radius of $E_p(\mat{G}_{\mat{A}}, n\v{e}_{n+1})$ at height $\rho$, with respect to the 1-norm, is
		\begin{equation}
			\theta(\rho) =
			\begin{cases}
				1, \text{ if } \rho\in [1,2n-1],\\
				0, \text{ otherwise}.
			\end{cases}
		\end{equation}
		For the same reasons as for the case $p \in (1,\infty)$, this proves $(S_{\mathrm{middle}} \cap E_{\infty}(\mat{G}_{\mat{A}}, n\v{e}_{n+1})) \subseteq  Z(\mat{H}, n\v{e}_{n+1})$.
		It remains to prove containment for the bottom part, which corresponds to the case $\rho \in [0, 1]$. We define the function
		\begin{equation}
			f(\rho) = \rho - \theta(\rho),
		\end{equation}
		which is non-negative if and only if the slice $\Set{\v{x}\in\reals^{n+1}}{x_{n+1} = \rho} \cap E_p(\mat{G}_{\mat{A}}, n\v{e}_{n+1})$ is contained in $(\rho \ball_1) \times \{\rho\}$. By Lemma \ref{lmm:classical_zono_cut}, $(S_{\mathrm{bottom}} \cap E_p(\mat{G}_{\mat{A}}, n\v{e}_{n+1})) \subseteq Z(\mat{H}, n\v{e}_{n+1})$ if and only if $f(\rho) \geq 0$ for all $\rho \in [0, 1]$, and $f(\rho) = 0$ for $\rho > 0$ implies the existence of a point in $E_p(\mat{G}_{\mat{A}}, n\v{e}_{n+1})$ that touches the boundary of $Z(\mat{H}, n\v{e}_{n+1})$ at height $\rho$. For $p = \infty$, $f(1) = 0$ and $f(\rho) = \rho \geq 0$ for $\rho \in [0, 1)$, so we are left with the case $p\in (1, \infty)$. By construction, $f(0) = 0$ and $f(\rho) = \rho > 0$ for $\rho \in (0,n-L]$, so it remains to analyze the range $(n-L, 1]$, on which $\theta$ (and thus $f$) is smooth in $\rho$. Since $n-1 < L < n$ by Lemma \ref{lmm:L_properties}, for $\rho \in (n-L, 1]$
		\begin{equation}
			\label{eq:quick_inquality_Lnrhop}
			\left(\frac{n-\rho}{L}\right)^p \in (0, 1).
		\end{equation}
		Using \eqref{eq:quick_inquality_Lnrhop} and the fact that $n> 1$, one can verify
		\begin{equation*}
			\frac{d^2f(\rho)}{d\rho^2} > 0
		\end{equation*}
		for $\rho \in (n-L, 1]$. Thus, any extremal point of $f$ on $(n-L,1]$ is a minimum. The conditions
		\begin{equation*}
			\begin{split}
				f(\rho) = 0, \quad
				\frac{df(\rho)}{d\rho} = 0
			\end{split}
		\end{equation*}
		are equivalent to \eqref{eq:varrho_L_1} and $\eqref{eq:varrho_L_2}$, so that $0 = f(\varrho) \leq f(\rho)$ for any $\rho \in (n-L, 1]$ and, thus, $r(E_p(\mat{G}_{\mat{A}}), Z(\mat{H})) = 1$, which completes the proof that $\norm{\mat{A}}_{p\mapsto 1} = 1$ implies $r(\Ein, \Ecirc) = 1$. To prove the converse, suppose $1 \neq \norm{\mat{A}}_{p\mapsto 1} =: \xi$. Analogously to $\theta(\rho)$, we define the radius $\theta_{\xi}(\rho)$ (with respect to the 1-norm) of the slice of $\Ein$ at height $\rho$, which is now given for $p\in(1,\infty)$ as
		\begin{equation}
			\theta_{\xi} = \begin{cases}
				\xi\left(1 - \left|\frac{n-\rho}{L}\right|^p\right)^{1/p}, & \text{ if } \rho \in [n-L, n+L],\\
				0, & \text{ otherwise.}
			\end{cases}
		\end{equation}
		In other words, $\theta_{\xi}(\rho) = \xi \theta(\rho)$, which we analyze case by case:
		\begin{itemize}
			\item If $\xi = 0$, $E_p(\mat{G}_{\mat{A}}, n\v{e}_{n+1})$ is just the line segment from $(\v{0}^{\,\top},n-L)^\top$ to $(\v{0}^{\,\top},n+L)^\top$, which is contained in $Z(\mat{H}, n\v{e}_{n+1})$ but does not touch its boundary, thus $r(E_p(\mat{G}_{\mat{A}}), Z(\mat{H})) < 1$.
			\item If $0 < \xi < 1$, we have $\theta_{\xi}(\rho) < \theta(\rho)$ for every $\rho \in (n-L,n+L)$, and thus $E_p(\mat{G}_{\mat{A}}, n\v{e}_{n+1})$ is entirely contained in $E_p(\mat{G}_{\mat{A}/\xi}, n\v{e}_{n+1})$, only touching its boundary at the points $(\v{0}^{\,\top},n-L)^\top$ and $(\v{0}^{\,\top},n+L)$. Since these two points do not lie on the boundary of $Z(\mat{H}, n\v{e}_{n+1})$, the set $E_p(\mat{G}_{\mat{A}}, n\v{e}_{n+1})$ does not touch any boundary point of $Z(\mat{H}, n\v{e}_{n+1})$.
			\item If $1 < \xi$, then $\theta_{\xi}(\varrho) > \theta(\varrho) = \varrho$, so the slice $\Set{\v{x}\in\reals^{n+1}}{x_{n+1} = \rho} \cap E_p(\mat{G}_{\mat{A}}, n\v{e}_{n+1})$ can not be contained in $Z(\mat{H}, n\v{e}_{n+1})$, implying $r(E_p(\mat{G}_{\mat{A}}), Z(\mat{H})) > 1$.
		\end{itemize}
		In any case, $\Ein$ is either strictly contained or not contained in $\Ecirc$, which implies $r(E_p(\mat{G}_{\mat{A}}), Z(\mat{H})) \neq 1$. For $p=\infty$, the argument is almost identical, since then $\theta_{\xi}(\rho) = \xi$ if $\rho \in [1,2n-1]$, and $\theta_{\xi}(\rho) = 0$ otherwise.	
	\end{proof}
	
	We can now define the continuous function $\sigma(\xi)$ that will be used to determine $\norm{\mat{A}}_{p\mapsto 1}$ using a bisection algorithm:
	\begin{proposition}
		\label{prop:apx_main_prop}
		For $\mat{A} \in \reals^{n\times m}$ with $\mat{A} \neq \mat{0}$, let $\mat{H}$ and $\mat{G}_{\mat{A}}$ be defined as above.
		For $\xi \in (0, \infty)$, define
		\begin{equation}
			\label{eq:sigma_def}
			\sigma(\xi) := r(E_p(\mat{G}_{\frac{1}{\xi}\mat{A}}), Z(\mat{H})).
		\end{equation}
		Then $\sigma$ is continuous, $\sigma(\xi) \rightarrow \infty$ for $\xi\rightarrow 0$, and $\sigma(\widehat{\xi}) \leq 1$ for
		\begin{equation}
			\label{eq:widehat_xi_def}
			\widehat{\xi} = m^{\frac{p-1}{p}}\norm{\mat{A}}_{1\mapsto 1}.
		\end{equation}
		Additionally, $\sigma(\xi) = 1$ has the unique solution $\xi = \norm{\mat{A}}_{p\mapsto 1}$, and for any $\xi, \xi' \in (0, \widehat{\xi}\;]$
		\begin{equation}
			\label{eq:bound_sigma}
			\left|\sigma(\xi') - \sigma(\xi)\right| \geq \frac{\norm{\mat{A}}_{1\mapsto \infty}}{\widehat{\xi}^2}\left|\xi - \xi'\right|.
		\end{equation}
	\end{proposition}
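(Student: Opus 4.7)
The first four assertions of the proposition---continuity, the bound $\sigma(\widehat\xi) \leq 1$, uniqueness of the root, and the limit $\sigma(\xi) \to \infty$---follow from Lemma~\ref{lmm:main_technical_lemma_apx} together with elementary facts, whereas the Lipschitz-type lower bound is the main obstacle. Continuity of $\sigma$ results from $r(\cdot, Z(\mat{H}))$ being continuous in the generator matrix (the dual representation \eqref{eq:ellipsotope_containment_equiv} expresses $r$ as a supremum of continuous functions over a compact set) together with the continuous dependence of $\mat{G}_{\mat{A}/\xi}$ on $\xi > 0$. For $\sigma(\widehat\xi) \leq 1$, Hölder's inequality gives $\|\v{\alpha}\|_1 \leq m^{(p-1)/p}\|\v{\alpha}\|_p$ for $\v{\alpha} \in \reals^m$, hence $\|\mat{A}\|_{p\mapsto 1} \leq m^{(p-1)/p}\|\mat{A}\|_{1\mapsto 1} = \widehat\xi$, so $\|\mat{A}/\widehat\xi\|_{p\mapsto 1} \leq 1$; Lemma~\ref{lmm:main_technical_lemma_apx} then yields $\sigma(\widehat\xi) \leq 1$. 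The same Lemma gives $\sigma(\xi) = 1 \Leftrightarrow \|\mat{A}\|_{p\mapsto 1}/\xi = 1 \Leftrightarrow \xi = \|\mat{A}\|_{p\mapsto 1}$, establishing uniqueness.

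The limit at zero and the Lipschitz lower bound hinge on a single concrete witness. Pick $(i^*, j^*)$ with $|A_{i^*, j^*}| = \|\mat{A}\|_{1\mapsto\infty}$ and set $\v{\alpha}^* := (\sign(A_{i^*, j^*})\v{e}_{j^*}, 0) \in \reals^{m+1}$; then $\|\v{\alpha}^*\|_p = 1$ and $\mat{G}_{\mat{A}/\xi}\v{\alpha}^* = (\sign(A_{i^*, j^*})\v{a}_{j^*}/\xi, 0)$. Minimizing the scaling $s$ in the parametrization $\mat{H}\v{\gamma} = (\v{v}, 0)/s$ with $\|\v{\gamma}\|_\infty \leq 1$ yields the closed form $\|(\v{v}, 0)\|_{Z(\mat{H})} = C_0 \|\v{v}\|_\infty$ for an explicit constant $C_0 > 0$ depending only on $n$. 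Consequently $\sigma(\xi) \geq C_0 \|\mat{A}\|_{1\mapsto\infty}/\xi$, which diverges as $\xi \to 0$.

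For the Lipschitz-type lower bound---the main obstacle---I would reparametrize $\tilde{\sigma}(\tau) := \sigma(1/\tau)$, which is convex in $\tau$ because it is a pointwise supremum over $\v{\alpha}$ of the functions $\tau \mapsto \|\mat{G}_{\tau\mat{A}}\v{\alpha}\|_{Z(\mat{H})}$, each a norm of an affine function of $\tau$. The witness bound becomes $\tilde{\sigma}(\tau) \geq C_0\|\mat{A}\|_{1\mapsto\infty}\tau$, from which $\lim_{\tau \to \infty}\tilde{\sigma}(\tau)/\tau \geq C_0\|\mat{A}\|_{1\mapsto\infty}$. The hard step is promoting this asymptotic slope bound to a uniform lower bound on the range $\tau \in [1/\widehat\xi, \infty)$: convexity of $\tilde\sigma$ and monotonicity of its subgradient make the slope nondecreasing, but to ensure that it is already at least $C_0\|\mat{A}\|_{1\mapsto\infty}$ at $\tau = 1/\widehat\xi$, one must rule out a nearly flat initial segment caused by the competing constant contribution of the $(n+1)$-th generator (the vertical direction $\alpha_{m+1} = \pm 1$ yields a constant $L \|\v{e}_{n+1}\|_{Z(\mat{H})}$, independent of $\tau$). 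Once this uniform slope bound is established, convexity yields $|\tilde{\sigma}(\tau) - \tilde{\sigma}(\tau')| \geq C_0\|\mat{A}\|_{1\mapsto\infty}|\tau - \tau'|$, and substituting $|\tau - \tau'| = |\xi - \xi'|/(\xi\xi') \geq |\xi - \xi'|/\widehat\xi^2$ completes the proof.
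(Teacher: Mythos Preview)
Your treatment of continuity, the limit $\sigma(\xi)\to\infty$, the bound $\sigma(\widehat\xi)\le 1$, and the uniqueness of the root is correct and follows essentially the paper's route. The paper argues continuity via the primal max--min form after inserting the explicit pseudo-inverse of $\mat{H}$ rather than via the dual representation \eqref{eq:ellipsotope_containment_equiv}, but this is cosmetic. Your witness $\v{\alpha}^*=(\v{e}_{j^*},0)$ is exactly the paper's, and the paper pins down your constant as $C_0=1$: a two-line primal/dual computation shows $\nnorm{(\v{v},0)}_{Z(\frac{1}{2}\mat{H})}=\nnorm{\v{v}}_\infty$ for every $\v{v}\in\reals^n$, whence $\sigma(\xi)\ge\nnorm{\mat{A}}_{1\mapsto\infty}/\xi$.

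For the Lipschitz-type bound \eqref{eq:bound_sigma} the paper proceeds quite differently from your convexity plan. It does not introduce $\tilde\sigma(\tau)=\sigma(1/\tau)$ or invoke convexity; having derived $\sigma(\xi)\ge\nnorm{\mat{A}}_{1\mapsto\infty}/\xi$ together with the elementary inequality $\bigl|\tfrac{1}{\xi}-\tfrac{1}{\xi'}\bigr|\ge\tfrac{1}{\widehat\xi^{\,2}}\,|\xi-\xi'|$ on $(0,\widehat\xi\,]$, it simply asserts that ``combining'' these two yields \eqref{eq:bound_sigma}. Your instinct that this step is not automatic is sound: a pointwise lower bound $\sigma(\xi)\ge c/\xi$ alone does not force $|\sigma(\xi)-\sigma(\xi')|\ge c\,|1/\xi-1/\xi'|$ (any function that is constant and large on a subinterval satisfies the former but violates the latter). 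Your convexity reformulation is more principled---the convexity of $\tilde\sigma$ is genuine, since each $\tau\mapsto\nnorm{\mat{G}_{\tau\mat{A}}\v{\alpha}}_{Z(\frac{1}{2}\mat{H})}$ is a norm of an affine function of $\tau$---but the ``hard step'' you isolate is real: the linear lower bound $\tilde\sigma(\tau)\ge\nnorm{\mat{A}}_{1\mapsto\infty}\,\tau$ plus convexity does not by itself give $\tilde\sigma'_+(1/\widehat\xi)\ge\nnorm{\mat{A}}_{1\mapsto\infty}$, precisely because of the constant vertical contribution $\tilde\sigma(0)=L/n>0$ that you flag. In short, your proposal and the paper stall at the same place; the paper's one-line ``combining'' does not supply an argument that closes it, and neither does your outline.
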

	\begin{proof}
		We begin with the continuity of $\sigma(\xi)$: Since $\mat{H}$ has full rank, by \cite[p. 112, Theorem 2]{james_generalised_1978} the solutions $\v{\beta}$ to $\frac{1}{2}\mat{H} \v{\beta} = \frac{1}{\xi}\mat{G}_{\mat{A}}\v{\alpha}$ are given as
		\begin{equation*}
			\v{\beta} = 2\mat{H}^+\mat{G}_{\frac{1}{\xi}\mat{A}}\v{\alpha} + (I - \mat{H}^+\mat{H})\v{\omega},
		\end{equation*}
		where $\v{\omega} \in \reals^{2n}$ is arbitrary. We can thus rewrite $\sigma$ as
		\begin{equation}
			\label{eq:sigma_alternative}
			\sigma(\xi) = \max_{\norm{\v{\alpha}}_p\leq 1} \min_{\v{\omega}} \norm{\mat{H}^+\mat{G}_{\frac{1}{\xi}\mat{A}}\v{\alpha} + (I - \mat{H}^+\mat{H})\v{\omega}}_{\infty}.
		\end{equation}
		The cost function is continuous over $\v{\alpha}\in \reals^{m}$, $\v{\omega}\in\reals^{2n}$, and $\xi \in (0, \infty)$, thus $\sigma(\xi)$ is continuous over $\xi\in(0,\infty)$. We continue with the limit for $\xi\rightarrow 0$. We need an explicit formula for $\mat{H}^+$; we leave it to the reader to verify that
		\begin{equation}
			\mat{H}^+ =
			\frac{1}{2}\begin{bmatrix}
				\mat{I} & \frac{1}{n}\v{1}\\
				-\mat{I} & \frac{1}{n} \v{1}\\
			\end{bmatrix}
		\end{equation}
		satisfies all the requirements of a Moore-Penrose pseudo-inverse.	
		Since $\sigma$ is continuous, using \eqref{eq:sigma_alternative}, we have
		\begin{align*}
			&\lim_{\xi \rightarrow 0} \sigma(\xi) \\
			\overset{\eqref{eq:sigma_alternative}}&{=} \max_{\norm{\v{\alpha}}_p\leq 1} \min_{\v{\omega}} \max_i \lim_{\xi \rightarrow 0} \left|2\v{e}_i^\top\mat{H}^+\mat{G}_{\frac{1}{\xi}\mat{A}}\v{\alpha} + \v{e}_i^\top(I - \mat{H}^+\mat{H})\v{\omega}\right|\\
			\overset{\v{\alpha}=-\v{e}_{k}}&{\geq}  \min_{\v{\omega}} \max_i \lim_{\xi \rightarrow 0} \left|\frac{2}{\xi}L\v{e}_i^\top\mat{H}^+\begin{bmatrix}\v{a}_{k}\\0\end{bmatrix} + \v{e}_i^\top(I - \mat{H}^+\mat{H})\v{\omega}\right|\\
			&=\infty
		\end{align*}
		where $\v{a}_k$ denotes the $k$-th column of $\mat{A}$, with $k$ some index such that $\v{a}_k\neq \v{0}$.
		By the equivalence of norms on $\reals^m$, $\norm{\v{x}}_1 \leq m^{1-1/p}\norm{\v{x}}_p$ for any $\v{x}\in \reals^{m}$ and $p\in[1,\infty]$ (where we interpret $1/\infty$ as 0). This yields
		\begin{equation}
			\label{eq:norm_equivalence_A_norm}
			\begin{aligned}
				\norm{\mat{A}}_{p\mapsto 1} 
				&= \max_{\v{\alpha}\neq \v{0}} \frac{\norm{\mat{A}\v{\alpha}}_1}{\norm{\v{\alpha}}_p} && \text{(see \cite[Proposition III.2.1.]{conway_course_2007})}\\
				&\leq m^{\frac{p-1}{p}} \frac{\norm{\mat{A}\v{\alpha}}_1}{\norm{\v{\alpha}}_1} && \text{(by the equivalence of norms)}\\
				&= m^{\frac{p-1}{p}}\norm{\mat{A}}_{1\mapsto 1} && \text{(see \cite[Proposition III.2.1.]{conway_course_2007})}
			\end{aligned}
		\end{equation}
		Consequently, $\norm{\mat{A}}_{p\mapsto 1} / \widehat{\xi} \leq 1$, and so $\sigma(\widehat{\xi}) \leq 1$ follows by Lemma \ref{lmm:main_technical_lemma_apx}. Additionally, Lemma \ref{lmm:main_technical_lemma_apx} implies that $\xi = \norm{\mat{A}}_{p\mapsto 1}$ is the only solution to $\sigma(\xi) = 1$. We conclude by proving \eqref{eq:bound_sigma}: Let $\v{a}_1,...,\v{a}_m$ be the columns of $\mat{A}$, and assume $j\in\{1,...,m\}$ satisfies $\norm{\v{a}_j}_{\infty} = \max_{i}\norm{\v{a}_i}_{\infty} = \norm{\mat{A}}_{1\mapsto \infty}$. We can bound $\sigma(\xi)$ as
		\begin{align*}
			\sigma(\xi) 
			&= \max_{\norm{\v{\alpha}}_p\leq 1} \min_{\frac{1}{2}\mat{H}\v{\beta} = \mat{G}_{\frac{1}{\xi}\mat{A}}\v{\alpha}} \nnorm{\v{\beta}}_{\infty} && \text{(using \eqref{eq:sigma_def} and \eqref{eq:ellipsotope_containment})}\\
			&\geq \min_{\frac{1}{2}\mat{H}\v{\beta} = \frac{1}{\xi}\begin{bsmallmatrix}\v{a}_j\\0\end{bsmallmatrix}} \nnorm{\v{\beta}}_{\infty} & \begin{split}&\text{(definition of $\max$,}\\
				&\text{replacing $\v{\alpha}$ by $\v{e}_j$)}\end{split}\\
			&= \frac{1}{\xi}\norm{\v{a}_j}_{Z(\frac{1}{2}\mat{H})} && \text{(using Proposition \ref{prop:ellipsotope_norm})}
		\end{align*}
		Choosing $\v{\beta}$ as $\beta_i = a_{j,i}$ and $\beta_{i+n} = -a_{j,i}$ for $i=1,...,n$ shows $\norm{\v{a}_j}_{Z(\frac{1}{2}\mat{H})} \leq \norm{\v{a}_j}_{\infty}$. By duality,
		\begin{equation*}
			\norm{\v{a}_j}_{Z(\frac{1}{2}\mat{H})} = \max_{\norm{\frac{1}{2}\mat{H}^\top \v{y}}_1\leq 1} \v{y}^\top\v{a}_j.
		\end{equation*}
		Let $k$ be an index such that $|a_{j,k}| = \norm{\v{a}_j}_{\infty}$, then choosing $\v{y} = \v{e}_k$ or $\v{y} = -\v{e}_k$ shows $\norm{\v{a}_j}_{Z(\frac{1}{2}\mat{H})} \geq |a_{j,k}|$, and thus
		\begin{equation}
			\label{eq:sigma_lower_bound}
			\sigma(\xi) \geq \frac{\norm{\mat{A}}_{1\mapsto \infty}}{\xi}.
		\end{equation}
		For $\xi, \xi' \in (0, \widehat{\xi}]$,
		\begin{equation}
			\label{eq:inv_lower_bound}
			\left(\frac{1}{\xi} - \frac{1}{\xi'}\right) \geq \frac{1}{\widehat{\xi}^2}\left|\xi - \xi'\right|.
		\end{equation}
		Combining \eqref{eq:sigma_lower_bound} with \eqref{eq:inv_lower_bound} yields \eqref{eq:bound_sigma}.
	\end{proof}
	
	We now have the necessary tools to prove that the ellipsotope-in-zonotope containment problem is not in $\FPTAS$, unless $\Poly = \NP$:
	
	\textit{Proof of Theorem \ref{thm:not_FPTAS}:}
	As mentioned earlier, the key idea is to construct a bisection-type algorithm to find a solution to $\sigma(\xi) = 1$, using an approximation $\algo{approx}(\xi)$. This will give us a solution $\xi_{\star}$ which is close to $\norm{\mat{A}}_{p\mapsto 1}$, thus showing that one can approximate the $p\mapsto 1$-norm. The main challenge is to examine how close we can make $\xi_{\star}$ to $\norm{\mat{A}}_{p\mapsto 1}$.
	
	For a matrix $\mat{A}\in \reals^{n\times m}$ and $p\in(1,\infty)$, let $\sigma$ and $\widehat{\xi}$ be as defined in Proposition \ref{prop:apx_main_prop}, and assume that $\varrho$ and $L$ have been computed within floating-point error. Let $\delta > 0$ be a fixed accuracy parameter, and assume that the ellipsotope-in-zonotope containment problem is in $\FPTAS$, so that for any $\varepsilon > 0$, and $\xi \in (0, \widehat{\xi}\,)$ there exists an approximation algorithm $\algo{approx}(\xi)$ satisfying
	\begin{equation}
		\sigma(\xi) \leq \algo{approx}(\xi) \leq (1+\varepsilon) \sigma(\xi),
	\end{equation}
	running in polynomial time with respect to the representation size of $\mat{A}$ and $1/\varepsilon$. We set up a bisection-type algorithm applied on the initial interval $I = (0, \widehat{\xi}\,)$ for some small parameter $\mu > 0$:
	\begin{enumerate}[Step 1:]
		\item Let $a$ and $b$ be the lower and upper bounds, respectively, of $I$ (i.e., $I = (a,b)$).
		\item If $b - a \leq 2\mu$, let $\xi_{\star} \gets \frac{1}{2}(a+b)$ and return $\xi_{\star}$. Otherwise, continue with Step 3.
		\item Let $\xi_{\star} \gets \frac{1}{2}(a+b)$ be the center of $I$.
		\item If $\algo{approx}(\xi_{\star}) \leq 1$, set $I \gets (a, \xi_{\star})$ and go to Step~1.
		\item If $\algo{approx}(\xi_{\star}) \geq 1+\varepsilon$, set $I \gets (\xi_{\star}, b)$ and go to Step~1.
		\item If $1 < \algo{approx}(\xi_{\star}) < 1+\varepsilon$, return $\xi_{\star}$.
	\end{enumerate}
	Note that, unlike a traditional bisection algorithm, we do not directly search for a root of $\sigma-1$, nor do we try to find a root of $\algo{approx}-1$, as $\algo{approx}$ may not be continuous. Instead, we try to approximate a root of $\sigma-1$ using the approximation $\algo{approx}-1$, which requires a more detailed case-by-case analysis.
	
	Specifically, note that the algorithm terminates either in Step~2 or Step~6. In the first case, let $a = \xi_{\star} - \mu$ and $b = \xi_{\star} + \mu$. Since the overall structure of the algorithm is identical to a traditional bisection algorithm, by construction $\algo{approx}_{\varepsilon}(a) \geq 1 + \varepsilon$ and $\algo{approx}_{\varepsilon}(b) \leq 1$, thus
	\begin{equation*}
		\sigma(a) \geq \frac{1}{1+\varepsilon}\algo{approx}(a) \geq 1 \geq \algo{approx}(b) \geq \sigma(b).
	\end{equation*}
	Since by Proposition \ref{prop:apx_main_prop}, $\sigma$ is continuous and only $\xi = \norm{\mat{A}}_{p\mapsto 1}$ satisfies $\sigma(\xi) = 1$, by the intermediate value theorem
	\begin{equation}
		\label{eq:first_bounds_mu}
		\norm{\mat{A}}_{p\mapsto 1} \in [a, b] = \xi_{\star} + \mu[-1,1].
	\end{equation}
	Similarly as in \eqref{eq:norm_equivalence_A_norm}, one can prove $\norm{\mat{A}}_{1\mapsto 1} \leq \norm{\mat{A}}_{p\mapsto 1}$. Therefore, by choosing $\mu = \frac{\delta}{2}\norm{\mat{A}}_{1\mapsto 1}$, we obtain the relation
	\begin{equation}
		\label{eq:final_step_2_result}
		\norm{\mat{A}}_{p\mapsto 1} \leq \xi_{\star} + \frac{\delta}{2}\norm{\mat{A}}_{1\mapsto 1} \leq (1+\delta)\norm{\mat{A}}_{p\mapsto 1}.
	\end{equation}
	On the other hand, if the algorithm terminates in Step 6, $\xi_{\star}$ satisfies
	\begin{align*}
		&\sigma(\xi_{\star}) \leq \algo{approx}(\xi_{\star}) \leq 1+\varepsilon,\\
		&\sigma(\xi_{\star}) \geq \frac{1}{1+\varepsilon}\algo{approx}(\xi_{\star}) \geq \frac{1}{1+\varepsilon}.
	\end{align*}
	By using \eqref{eq:bound_sigma} on $\xi = \norm{\mat{A}}_{p\mapsto 1}$ (in which case $\sigma(\xi) = 1$) and $\xi' = \xi_{\star}$, we obtain
	\begin{equation*}
		\frac{\norm{\mat{A}}_{1\mapsto \infty}}{\widehat{\xi}^2}\left| \norm{\mat{A}}_{p\mapsto 1} - \xi_{\star}\right| \leq  \left|1 - \sigma(\xi_{\star})\right| \in \left[\frac{\varepsilon}{1+\varepsilon}, \varepsilon\right]
	\end{equation*}
	which implies
	\begin{equation*}
		\norm{\mat{A}}_{p\mapsto 1} \leq \xi_{\star} + \frac{\widehat{\xi}^2\varepsilon}{\norm{\mat{A}}_{1\mapsto \infty}} \leq \norm{\mat{A}}_{p\mapsto 1} + 2\frac{\widehat{\xi}^2\varepsilon}{\norm{\mat{A}}_{1\mapsto \infty}}.
	\end{equation*}
	Using a similar argument as for \eqref{eq:final_step_2_result}, we may choose
	\begin{equation}
		\label{eq:choice_epsilon}
		\varepsilon = \frac{\delta\norm{\mat{A}}_{1\mapsto 1}\norm{\mat{A}}_{1\mapsto \infty}}{2\widehat{\xi}^2},
	\end{equation}
	which yields
	\begin{equation*}
		\norm{\mat{A}}_{p\mapsto 1} \leq \xi_{\star} + \frac{\delta\norm{\mat{A}}_{1\mapsto 1}}{2} \leq (1+\delta)\norm{\mat{A}}_{p\mapsto 1}.
	\end{equation*}
	Therefore, we always end up with a $(1+\delta)$-approximation for $\norm{\mat{A}}_{p\mapsto 1}$. Concerning the runtime, let $N_{\mathrm{bisect}}$ be the number of times the interval $I$ has to be halved in the bisection-type algorithm above. Just like for any bisection algorithm,
	\begin{equation*}
		N_{\mathrm{bisect}} \leq \log_2(\widehat{\xi}/\mu) - 1 = \frac{p-1}{p}\log_2(m/\delta).
	\end{equation*}
	Since $\log_2(x) \leq x$ for any $x\in\reals$, $N$ can be bounded by a polynomial in $m$ and $1/\delta$. On the other hand, since $\norm{\v{x}}_1 \leq n\norm{\v{x}}_{\infty}$ for any $\v{x} \in \reals^n$ implies $\norm{\mat{A}}_{1\mapsto 1}\leq n\norm{\mat{A}}_{1\mapsto \infty}$, our choice of $\varepsilon$ in \eqref{eq:choice_epsilon} yields
	\begin{align*}
		\frac{1}{\varepsilon}  &= \frac{2\widehat{\xi}^2}{\delta\norm{\mat{A}}_{1\mapsto 1}\norm{\mat{A}}_{1\mapsto \infty}} && \text{(using \eqref{eq:choice_epsilon})}\\
		&\leq \frac{2m^{\frac{2p-2}{p}}\norm{\mat{A}}_{1\mapsto 1}^2}{\delta\norm{\mat{A}}_{1\mapsto 1}\norm{\mat{A}}_{1\mapsto \infty}} && \text{(using \eqref{eq:widehat_xi_def})}\\
		&\leq \frac{2nm^{\frac{2p-2}{p}}\norm{\mat{A}}_{1\mapsto 1}^2}{\delta\norm{\mat{A}}_{1\mapsto 1}^2} && \text{(using $\norm{\mat{A}}_{1\mapsto 1}\leq n\norm{\mat{A}}_{1\mapsto \infty}$)}\\
		&= \frac{2nm^{\frac{2p-2}{p}}}{\delta},
	\end{align*}
	which is polynomial in $1/\delta$, $n$, and $m$. Finally, note that $\norm{\mat{A}}_{1\mapsto 1} = \max_i \sum_j |A_{ji}|$ and $\norm{\mat{A}}_{1\mapsto \infty} = \max_{i,j} |A_{ij}|$ can be computed in polynomial time with respect to $n$ and $m$. Overall, this means that executing our modified bisection algorithm requires at most $N_{\mathrm{bisect}}$ evaluations of the approximation algorithm $\algo{approx}$, evaluated with an accuracy $\varepsilon$ such that $1/\varepsilon$ is polynomial in $n$, $m$, $l$, and $1/\delta$, so that the entire algorithm has polynomial runtime. Consequently, we found a $(1+\delta)$-approximation algorithm for $\norm{\mat{A}}_{p\mapsto 1}$, meaning that computing $\norm{\mat{A}}_{p\mapsto 1}$ would be in $\FPTAS$, which is impossible according to \cite[Theorem 6.4.]{bhaskara} for $1<p<2$ and \cite[Theorem 1.4.]{bhattiprolu_2023} for $2\leq p \leq \infty$, unless $\Poly = \NP$. Moreover, since by Proposition \ref{prop:apx_main_prop} it holds that $\norm{\mat{A}}_{p\mapsto 1} > 1$ if and only if $r(\Ein, \Zcirc) > 1$, the decision formulation of the containment problem is $\coNP$-hard. So far, we assumed $p < \infty$, but for $p=\infty$ the arguments above still hold by replacing any occurrence of $1/p$ and $p/p$ by $0$ and $1$, respectively. This completes the proof of the Theorem.
	
	$\hfill\blacksquare$
	
	\subsection{Proof of Theorem \ref{thm:RP_RQP_zono_containment}}
	\label{sec:proof_thm_RP_RQP}
	Let $p \in (2, \infty)$ be fixed, $\mat{A}\in\reals^{m\times n}$, and $\mat{H} \in \reals^{n\times l}$ be a matrix we shall choose later. Since we can always complete $\mat{A}$ with zeros without impacting the value of $\norm{\mat{A}}_{p^*\mapsto p^*}$, we may assume $m=n=\widetilde{N}$. We assume that $r(E_p(\mat{A}^\top), Z(\mat{H}))$ can be approximated in polynomial time (with respect to the representation sizes of $\mat{A}$ and $\mat{H}$) by some algorithm $\algo{approx}(\mat{A}, \mat{H})$ which achieves an approximation ratio $\tau \geq 1$, and let $\varepsilon > 0$ be some small parameter (for instance $\varepsilon \leq 1/3$). By \eqref{eq:ellipsotope_containment_equiv},
	\begin{equation*}
		r(E_p(\mat{A}^\top), Z(\mat{H})) = \max_{\norm{\mat{H}^\top \v{y}}_1 \leq 1} \norm{\mat{A} \v{y}}_{p^*}.
	\end{equation*}
	According to \cite[Theorem 4.12]{bhattiprolu_2023}, there exists a probability distribution on $\reals^{l\times n}$ with $l$ growing at most polynomially in $n$ (though it may grow exponentially with respect to $1/\varepsilon$) that can produce a matrix $\mat{B} \in \reals^{l\times n}$ such that
	\begin{equation}
		\label{eq:mat_B_property}
		(1-\varepsilon)\norm{\v{y}}_{p^*} \leq \norm{\mat{B}\v{y}}_1 \leq (1+\varepsilon)\norm{\v{y}}_{p^*}
	\end{equation}
	holds with probability $P$, where $P \in (0,1)$ is a constant that can depend on $p$ but not on $n$ nor $\varepsilon$. Consequently,
	\begin{align*}
		&\norm{\mat{A}}_{p^*\mapsto p^*}\\
		&=\max_{\v{x}\neq\v{0}} \frac{\norm{\mat{A}\v{x}}_{p^*}}{\norm{\v{x}}_{p^*}} && \text{(see \cite[Proposition III.2.1]{conway_course_2007})}\\
		&\leq \max_{\v{x}\neq\v{0}} (1+\varepsilon)\frac{\norm{\mat{A}\v{x}}_{p^*}}{\norm{\mat{B}\v{x}}_{1}} && \text{(using \eqref{eq:mat_B_property})}\\
		&=(1+\varepsilon)\max_{\norm{\mat{B}\v{x}}_{1}\leq 1} \norm{\mat{A}\v{x}}_{p^*} && \text{(see \cite[Proposition III.2.1]{conway_course_2007})}\\
		&\leq (1+\varepsilon)\algo{approx}(\mat{A}, \mat{B}^\top) && \text{(definition of $\algo{approx}$)}\\
		&\leq (1+\varepsilon)\tau \cdot \max_{\norm{\mat{B}\v{x}}_{1}\leq 1} \norm{\mat{A}\v{x}}_{p^*} && \text{(definition of $\algo{approx}$)}\\
		&\leq \frac{1+\varepsilon}{1-\varepsilon}\tau\norm{\mat{A}}_{p^*\mapsto p^*}. && \text{(using \eqref{eq:mat_B_property})}
	\end{align*}
	We can thus conclude
	\begin{equation}
		\label{eq:prop_bound_A_p_dual_p_dual}
		\norm{\mat{A}}_{p^*\mapsto p^*} \leq (1+\varepsilon)\algo{approx}(\mat{A},\mat{B}^\top) \leq \frac{1+\varepsilon}{1-\varepsilon}\tau\norm{\mat{A}}_{p^*\mapsto p^*}.
	\end{equation}
	If $\varepsilon \leq 1/3$, this yields a $2\tau$-approximation that runs in polynomial time and returns the correct result with a probability of at least $P$. Without loss of generality, we may assume $P \geq 1/2$, since it suffices to repeat the algorithm several times for different $\mat{B}$ to obtain, with a probability of at least $1/2$, that \eqref{eq:prop_bound_A_p_dual_p_dual} holds for at least some $\mat{B}$, and thus taking the maximum of $(1-\varepsilon)\algo{approx}(\mat{A}, \mat{B}^\top)$ over all such $\mat{B}$ yields a $2\tau$-approximation that holds with a probability of at least $1/2$. According to \cite[Theorem 6.2.]{bhaskara}, approximating $\norm{\mat{A}}_{s\mapsto s}$ for $s \in (2,\infty)$, and thus for $s\in (1,2)$ by duality, within any constant ratio is $\NP$-hard. Therefore, the existence of an algorithm as described above would entail $\NP \subseteq \RP$. Furthermore, \cite[Theorem 6.2.]{bhaskara} proved that $\norm{\mat{A}}_{s\mapsto s}$ is inapproximable within a factor $\Omega(2^{(\log \widetilde{N})^{1-\delta}})$ for any $\delta > 0$ unless every problem in $\NP$ can be solved by an algorithm that runs in quasi-polynomial time with respect to $N$. Consequently, the ellipsotope-in-zonotope containment problem must be inapproximable within a factor $\Omega(2^{(\log N)^{1-\delta}})$, unless $\NP \subseteq \RQP$.
	
	For $p = \infty$, we instead consider approximations $\algo{approx}(\mat{A}, \mat{B})$ of $r(Z(\mat{A}^\top\mat{B}^\top), Z(\mat{B}^\top))$, where $\mat{B}$ is a random matrix which satisfies \eqref{eq:mat_B_property} with probability $P$, with $p^*$ replaced by some $q\in(1,2)$. Similarly to the case $p\in(2,\infty)$ we can deduce
	\begin{equation}
		\label{eq:prop_p=inf_replacement}
		\norm{\mat{A}}_{q\mapsto q} \leq \frac{1+\varepsilon}{1-\varepsilon}\algo{approx}(\mat{A}, \mat{B}) \leq \frac{(1+\varepsilon)^2}{(1-\varepsilon)^2}\tau\norm{\mat{A}}_{q\mapsto q}.
	\end{equation}
	The rest of the proof is identical to the $p\in(2,\infty)$ case, with the sole exception being the substitution of \eqref{eq:prop_bound_A_p_dual_p_dual} by \eqref{eq:prop_p=inf_replacement}, yielding a $4\tau$-approximation if $\varepsilon \leq 1/3$ (instead of a $2\tau$-approximation).
	
	$\hfill\blacksquare$

	\section*{Acknowledgment}
	
	We thank Vijay Bhattiprolu for his pivotal suggestions that led to the proof of Theorem \ref{thm:RP_RQP_zono_containment}. We would also like to thank Mark Wetzlinger and Matthias Mayer for proofreading this manuscript.
	
	\section*{REFERENCES}
	
	\bibliographystyle{IEEEtran}
	\bibliography{references}

\begin{thebibliography}{10}
\providecommand{\url}[1]{#1}
\csname url@samestyle\endcsname
\providecommand{\newblock}{\relax}
\providecommand{\bibinfo}[2]{#2}
\providecommand{\BIBentrySTDinterwordspacing}{\spaceskip=0pt\relax}
\providecommand{\BIBentryALTinterwordstretchfactor}{4}
\providecommand{\BIBentryALTinterwordspacing}{\spaceskip=\fontdimen2\font plus
\BIBentryALTinterwordstretchfactor\fontdimen3\font minus
  \fontdimen4\font\relax}
\providecommand{\BIBforeignlanguage}[2]{{%
\expandafter\ifx\csname l@#1\endcsname\relax
\typeout{** WARNING: IEEEtran.bst: No hyphenation pattern has been}%
\typeout{** loaded for the language `#1'. Using the pattern for}%
\typeout{** the default language instead.}%
\else
\language=\csname l@#1\endcsname
\fi
#2}}
\providecommand{\BIBdecl}{\relax}
\BIBdecl

\bibitem{althoff_reachability_2013}
M.~Althoff, ``Reachability analysis of nonlinear systems using conservative
  polynomialization and non-convex sets,'' in \emph{Proceedings of the 16th
  international conference on {Hybrid} {S}ystems: {C}omputation and {C}ontrol},
  2013, pp. 173--182.

\bibitem{rakovic_optimized_2007}
S.~V. Raković, E.~C. Kerrigan, D.~Q. Mayne, and K.~I. Kouramas, ``Optimized
  robust control invariance for linear discrete-time systems: Theoretical
  foundations,'' \emph{Automatica}, vol.~43, pp. 831--841, 2007.

\bibitem{ren_zonotope-based_2021}
W.~Ren, J.~Calbert, and R.~Jungers, ``Zonotope-based controller synthesis for
  {LTL} specifications,'' in \emph{60th {IEEE} {Conference} on {Decision} and
  {Control}}, 2021, pp. 580--585.

\bibitem{Roehm2016}
H.~Roehm, J.~Oehlerking, M.~Woehrle, and M.~Althoff, ``Reachset conformance
  testing of hybrid automata,'' in \emph{Proceedings of Hybrid Systems:
  Computation and Control}, 2016, pp. 277--286.

\bibitem{kellner_containment_2013}
K.~Kellner, T.~Theobald, and C.~Trabandt, ``Containment problems for polytopes
  and spectrahedra,'' \emph{SIAM Journal on Optimization}, vol.~23, pp.
  1000--1020, 2013.

\bibitem{gritzmann_complexity_1994}
P.~Gritzmann and V.~Klee, ``On the complexity of some basic problems in
  computational convexity: {I}. {Containment} problems,'' \emph{Discrete
  Mathematics}, vol. 136, pp. 129--174, 1994.

\bibitem{Kulmburg2021}
A.~Kulmburg and M.~Althoff, ``On the {co-NP}-completeness of the zonotope
  containment problem,'' \emph{European Journal of Control}, vol.~62, pp.
  84--91, 2021.

\bibitem{sadraddini_linear_2019}
S.~Sadraddini and R.~Tedrake, ``Linear encodings for polytope containment
  problems,'' in \emph{IEEE 58th Conference on Decision and Control}, 2019, pp.
  4367--4372.

\bibitem{ellipsotopes}
S.~Kousik, A.~Dai, and G.~X. Gao, ``Ellipsotopes: Uniting ellipsoids and
  zonotopes for reachability analysis and fault detection,'' \emph{IEEE
  Transactions on Automatic Control}, pp. 1--13, 2022.

\bibitem{gruber}
F.~Gruber and M.~Althoff, ``Computing safe sets of linear sampled-data
  systems,'' \emph{IEEE Control Systems Letters}, vol.~5, no.~2, pp. 385--390,
  2021.

\bibitem{horn2012}
R.~A. Horn and C.~R. Johnson, \emph{{Matrix analysis}}, 2nd~ed.\hskip 1em plus
  0.5em minus 0.4em\relax Cambridge University Press, 2012.

\bibitem{buehler2018}
T.~B{\"u}hler and D.~Salamon, \emph{Functional Analysis}.\hskip 1em plus 0.5em
  minus 0.4em\relax American Mathematical Society, 2018.

\bibitem{boyd_convex_2004}
S.~P. Boyd and L.~Vandenberghe, \emph{{Convex optimization}}.\hskip 1em plus
  0.5em minus 0.4em\relax Cambridge University Press, 2004.

\bibitem{ziegler2012lectures}
G.~Ziegler, \emph{Lectures on Polytopes}, 1st~ed.\hskip 1em plus 0.5em minus
  0.4em\relax Springer New York, 2012, updated Seventh Printing.

\bibitem{victor_ellipsoid}
V.~Gaßmann and M.~Althoff, ``Scalable zonotope-ellipsoid conversions using the
  {E}uclidean zonotope norm,'' in \emph{American Control Conference}, 2020, pp.
  4715--4721.

\bibitem{ellipsoidal_toolbox}
A.~A. Kurzhanskiy and P.~Varaiya, ``Ellipsoidal toolbox,'' Electrical
  Engineering and Computer Sciences University of California at Berkeley, Tech.
  Rep. UCB/EECS-2006-46, 2006.

\bibitem{narici2010topological}
L.~Narici and E.~Beckenstein, \emph{Topological Vector Spaces}.\hskip 1em plus
  0.5em minus 0.4em\relax CRC Press, 2010.

\bibitem{vazirani2013approximation}
V.~Vazirani, \emph{Approximation Algorithms}.\hskip 1em plus 0.5em minus
  0.4em\relax Springer Berlin Heidelberg, 2013.

\bibitem{Arora_Barak_2009}
S.~Arora and B.~Barak, \emph{Computational Complexity: A Modern
  Approach}.\hskip 1em plus 0.5em minus 0.4em\relax Cambridge University Press,
  2009.

\bibitem{ausiello_complexity_1999}
G.~Ausiello, M.~Protasi, A.~Marchetti-Spaccamela, G.~Gambosi, P.~Crescenzi, and
  V.~Kann, \emph{Complexity and Approximation: Combinatorial Optimization
  Problems and Their Approximability Properties}.\hskip 1em plus 0.5em minus
  0.4em\relax Springer-Verlag, 1999.

\bibitem{roman_advanced_2008}
S.~Roman, \emph{Advanced Linear Algebra}.\hskip 1em plus 0.5em minus
  0.4em\relax Springer, 2008, vol. 135.

\bibitem{kulmburg2023generalized}
A.~Kulmburg, ``The generalized matrix norm problem,'' 2025, to appear in the
  SIAM Journal on Matrix Analysis and Applications (SIMAX).

\bibitem{ellipsoids2022}
V.~Gaßmann and M.~Althoff, ``Implementation of ellipsoidal operations in
  {CORA} 2022,'' in \emph{Proceedings of 9th International Workshop on Applied
  Verification of Continuous and Hybrid Systems}, vol.~90.\hskip 1em plus 0.5em
  minus 0.4em\relax EasyChair, 2022, pp. 1--17.

\bibitem{james_generalised_1978}
M.~James, ``The generalised inverse,'' \emph{The Mathematical Gazette},
  vol.~62, no. 420, pp. 109--114, 1978.

\bibitem{planitz_3_1979}
M.~Planitz, ``{Inconsistent} systems of linear equations,'' \emph{The
  Mathematical Gazette}, vol.~63, no. 425, pp. 181--185, 1979.

\bibitem{kulmburgSearchbasedStochasticSolutions2024}
A.~Kulmburg, I.~Brkan, and M.~Althoff, ``Search-based and stochastic solutions
  to the zonotope and ellipsotope containment problems,'' in \emph{2024
  European Control Conference}, 2024, pp. 1057--1064.

\bibitem{tomczak-jaegermann_banach-mazur_1989}
N.~Tomczak-Jaegermann, \emph{Banach-{Mazur} Distances and Finite-dimensional
  Operator Ideals}.\hskip 1em plus 0.5em minus 0.4em\relax Longman Scientific
  \& Technical, 1989.

\bibitem{bhattiprolu_2023}
V.~Bhattiprolu, M.~K. Ghosh, V.~Guruswami, E.~Lee, and M.~Tulsiani,
  ``Inapproximability of matrix {\(\boldsymbol{p \rightarrow q}\)} norms,''
  \emph{SIAM Journal on Computing}, vol.~52, no.~1, pp. 132--155, 2023.

\bibitem{bhaskara}
A.~Bhaskara and A.~Vijayaraghavan, ``Approximating matrix p-norms,'' in
  \emph{Proceedings of the Annual ACM-SIAM Symposium on Discrete Algorithms},
  2011, p. 497–511.

\bibitem{barak_brandao_et_al}
B.~Barak, F.~G. Brandao, A.~W. Harrow, J.~Kelner, D.~Steurer, and Y.~Zhou,
  ``Hypercontractivity, sum-of-squares proofs, and their applications,'' in
  \emph{Proceedings of the Annual ACM Symposium on Theory of Computing}, 2012,
  pp. 307--326.

\bibitem{burger_finding_2000}
T.~Burger and P.~Gritzmann, ``Finding optimal shadows of polytopes,''
  \emph{Discrete \& Computational Geometry}, vol.~24, pp. 219--240, 2000.

\bibitem{althoff_dissertation}
M.~Althoff, ``Reachability analysis and its application to the safety
  assessment of autonomous cars,'' Ph.D. dissertation, Technische Universität
  München, 2010.

\bibitem{Rakovic2017_sampledData}
S.~Raković, F.~Fontes, and I.~Kolmanovsky, ``Reachability and invariance for
  linear sampled–data systems,'' \emph{IFAC-PapersOnLine}, vol.~50, no.~1,
  pp. 3057--3062, 2017.

\bibitem{Lewis2012}
F.~Lewis, D.~Vrabie, and V.~Syrmos, \emph{Optimal Control}.\hskip 1em plus
  0.5em minus 0.4em\relax Wiley, 2012.

\bibitem{raghuraman_set_2022}
V.~Raghuraman and J.~P. Koeln, ``Set operations and order reductions for
  constrained zonotopes,'' \emph{Automatica}, vol. 139, 2022.

\bibitem{schaefer}
L.~Schäfer, F.~Gruber, and M.~Althoff, ``Scalable computation of robust
  control invariant sets of nonlinear systems,'' \emph{IEEE Transactions on
  Automatic Control}, vol.~69, no.~2, pp. 755--770, 2024.

\bibitem{schuermann2017}
B.~Schürmann and M.~Althoff, ``Optimal control of sets of solutions to
  formally guarantee constraints of disturbed linear systems,'' in
  \emph{American Control Conference}, 2017, pp. 2522--2529.

\bibitem{cora}
M.~Althoff, ``An introduction to {CORA} 2015,'' in \emph{Proceedings of the
  Workshop on Applied Verification for Continuous and Hybrid Systems}, 2015,
  pp. 120 -- 151.

\bibitem{mosek}
\BIBentryALTinterwordspacing
M.~ApS, \emph{{The MOSEK optimization toolbox for MATLAB manual. Version
  10.0.}}, 2022. [Online]. Available:
  \url{http://docs.mosek.com/10.0/toolbox/index.html}
\BIBentrySTDinterwordspacing

\bibitem{Lofberg2004}
J.~L{\"{o}}fberg, ``{YALMIP} : A toolbox for modeling and optimization in
  {MATLAB},'' in \emph{Proceedings of the CACSD Conference}, 2004.

\bibitem{aroc}
N.~Kochdumper, F.~Gruber, B.~Schürmann, V.~Gaßmann, M.~Klischat, and
  M.~Althoff, ``{AROC}: A toolbox for automated reachset optimal controller
  synthesis,'' in \emph{Proceedings of the 24th International Conference on
  Hybrid Systems: Computation and Control}, 2021.

\bibitem{ALTHOFF2010233}
M.~Althoff, O.~Stursberg, and M.~Buss, ``Computing reachable sets of hybrid
  systems using a combination of zonotopes and polytopes,'' \emph{Nonlinear
  Analysis: Hybrid Systems}, vol.~4, no.~2, pp. 233--249, 2010.

\bibitem{conway_course_2007}
J.~B. Conway, \emph{A Course in Functional Analysis}.\hskip 1em plus 0.5em
  minus 0.4em\relax Springer, 2007.

\end{thebibliography}
	
	\begin{IEEEbiography}[{\includegraphics[width=1in,height=1.25in,clip,keepaspectratio]{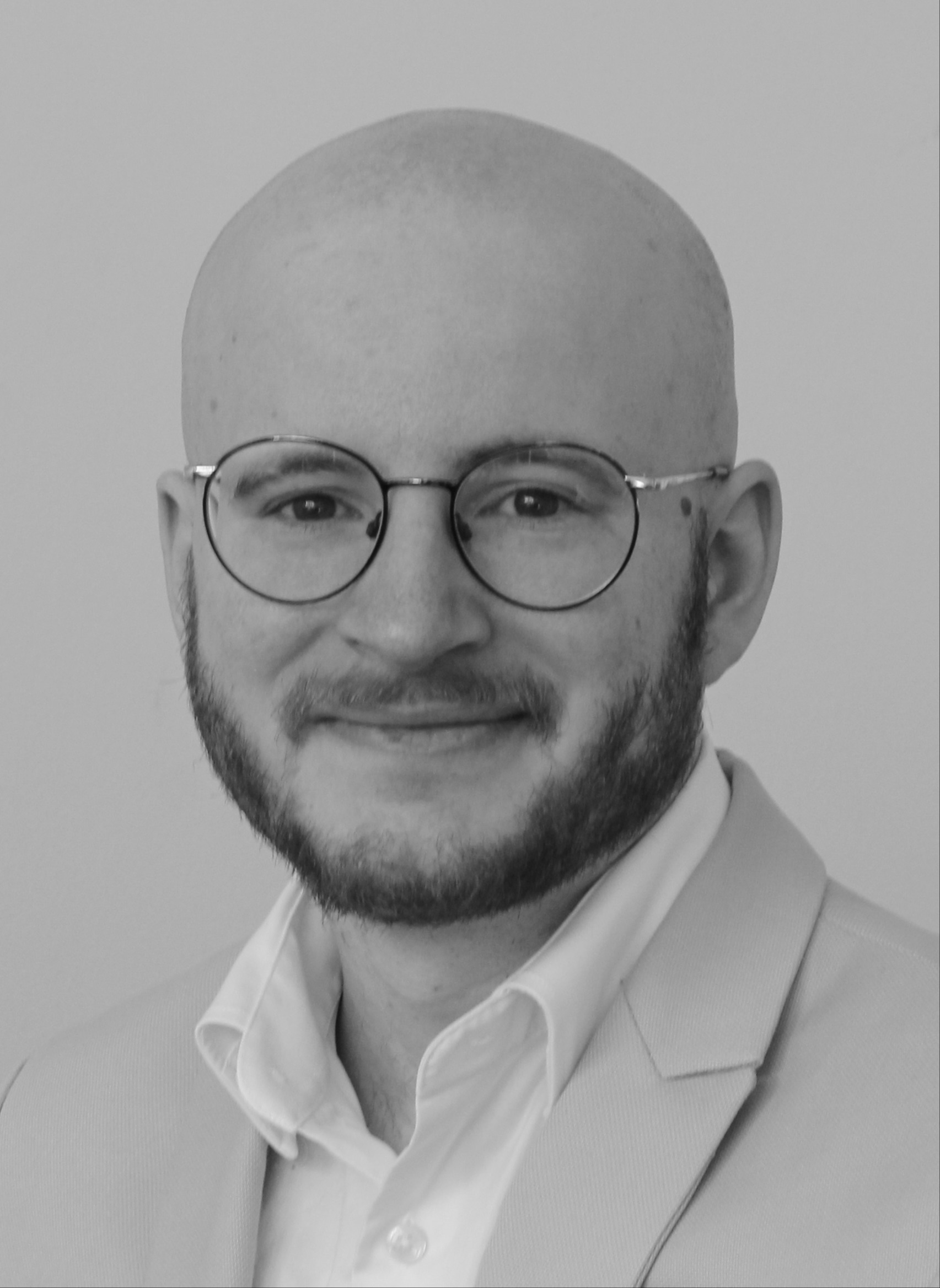}}]{Adrian Kulmburg} received the B.Sc. degree in mathematics in 2017, the M.Sc. degree in mathematics in 2018, and the B.Sc. degree in physics in 2019, all from the Federal Institute of Technology of Zürich, Zürich, Switzerland.
		
		He joined the Cyber-Physical Systems Group at the Technical University of Munich in 2020. His research interests focus on differential equations, discrete geometry, functional analysis, and computational complexity.
	\end{IEEEbiography}
	\begin{IEEEbiography}[{\includegraphics[width=1in,height=1.25in,clip,keepaspectratio]{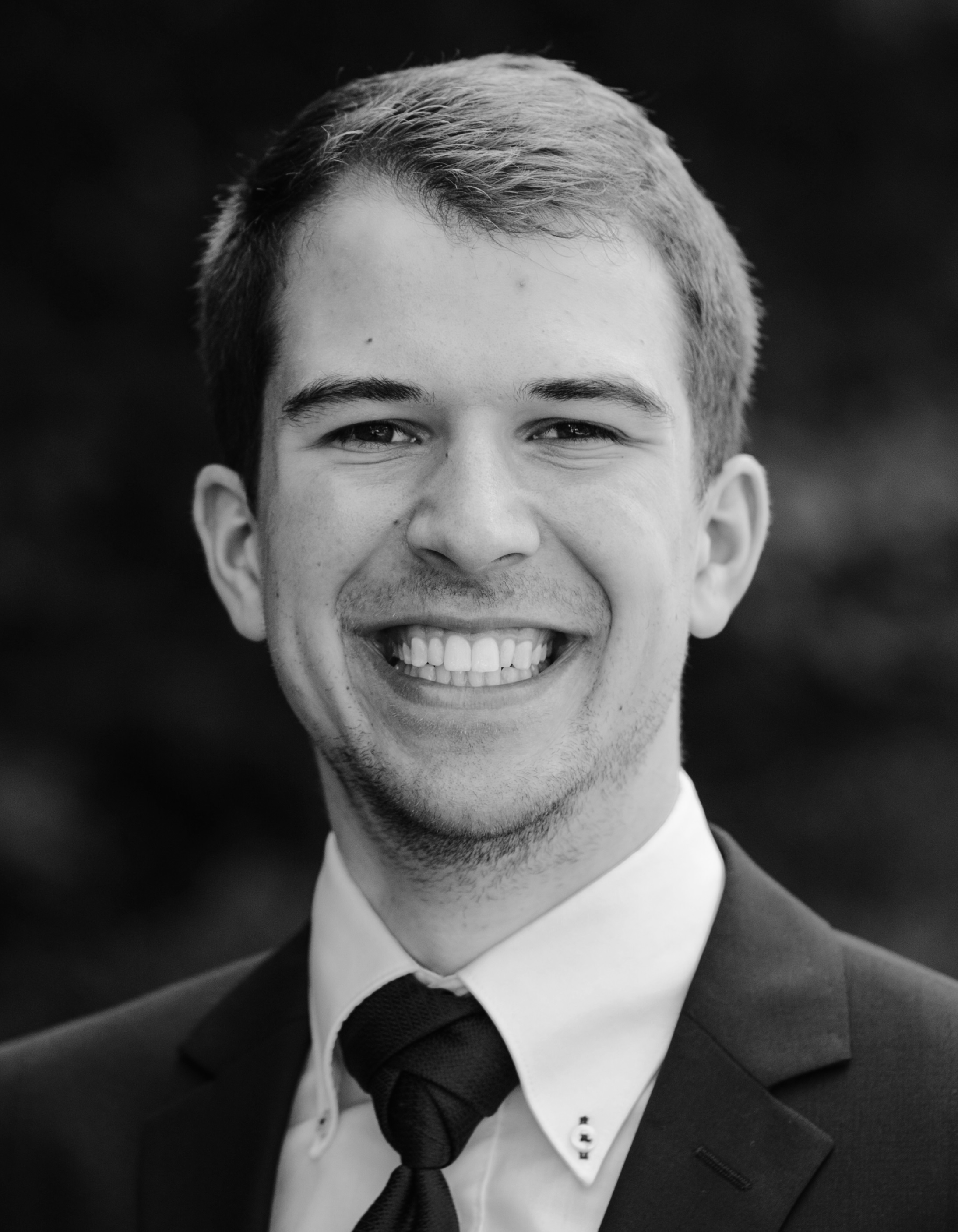}}]{Lukas Schäfer} received the B.Eng. degree in business administration and engineering from Baden-Wuerttemberg Cooperative State University, Stuttgart, Germany, in 2015, and the B.Sc. degree in mechanical engineering and the M.Sc. degree in robotics, cognition, intelligence both from the Technical University of Munich, Munich, Germany, in 2018 and 2021, respectively.
		
		He joined the Cyber–Physical Systems Group, Technical University of Munich, in 2021. His research focuses on robust nonlinear model predictive control and optimal control. Application-wise, he is interested in motion planning for autonomous vehicles (including vehicle dynamics modeling), as well as safeguarding reinforcement learning agents.
	\end{IEEEbiography}
	\begin{IEEEbiography}[{\includegraphics[width=1in,height=1.25in,clip,keepaspectratio]{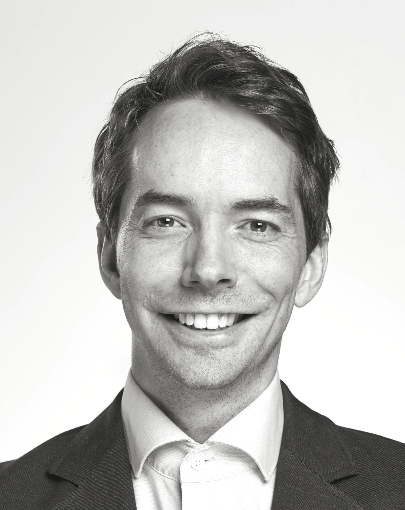}}]{Matthias Althoff} (Member, IEEE) received the diploma engineering degree in mechanical engineering and the Ph.D. degree in electrical engineering both from the Technical University of Munich, Munich, Germany, in 2005 and 2010,	respectively.
		
		He is an Associate Professor in Computer Science at the Technical University of Munich. From 2010 to 2012, he was a Postdoctoral Researcher at Carnegie Mellon University, Pittsburgh, PA, USA, and from 2012 to 2013, he was an Assistant Professor at Ilmenau University of Technology, Ilmenau, Germany. His research interests include the formal verification of continuous and hybrid systems, reachability analysis, planning algorithms, nonlinear control, automated vehicles, and power systems.
	\end{IEEEbiography}
	
\end{document}